\documentclass[12pt]{article}
\usepackage{amsmath}
\usepackage{amssymb}
\usepackage{amsthm}
\usepackage{a4wide}
\usepackage{lmodern}
\usepackage{microtype}
\usepackage{mathtools}
\usepackage[colorlinks=true,linkcolor=blue,citecolor=blue]{hyperref}
\usepackage{cleveref}
\crefformat{equation}{(#2#1#3)}
\Crefformat{equation}{(#2#1#3)}
\crefrangeformat{equation}{(#3#1#4)--(#5#2#6)}
\Crefrangeformat{equation}{(#3#1#4)--(#5#2#6)}
\crefmultiformat{equation}{(#2#1#3)}{ and (#2#1#3)}{, (#2#1#3)}{ and (#2#1#3)}
\Crefmultiformat{equation}{(#2#1#3)}{ and (#2#1#3)}{, (#2#1#3)}{ and (#2#1#3)}
\crefname{enumi}{part}{parts}
\Crefname{enumi}{Part}{Parts}
\creflabelformat{enumi}{(#2#1#3)}
\crefname{section}{Section}{Sections}
\Crefname{section}{Section}{Sections}
\crefname{subsection}{Section}{Sections}
\Crefname{subsection}{Section}{Sections}
\crefname{theorem}{Theorem}{Theorems}
\Crefname{theorem}{Theorem}{Theorems}
\crefname{proposition}{Proposition}{Propositions}
\Crefname{proposition}{Proposition}{Propositions}
\crefname{lemma}{Lemma}{Lemmas}
\Crefname{lemma}{Lemma}{Lemmas}
\crefname{corollary}{Corollary}{Corollaries}
\Crefname{corollary}{Corollary}{Corollaries}
\crefname{definition}{Definition}{Definitions}
\Crefname{definition}{Definition}{Definitions}
\crefname{remark}{Remark}{Remarks}
\Crefname{remark}{Remark}{Remarks}
\crefname{example}{Example}{Examples}
\Crefname{example}{Example}{Examples}
\usepackage{amsrefs}

\newtheorem{theorem}{Theorem}[section]
\newtheorem{proposition}[theorem]{Proposition}
\newtheorem{lemma}[theorem]{Lemma}
\newtheorem{corollary}[theorem]{Corollary}
\newtheorem{definition}[theorem]{Definition}
\newtheorem{remark}[theorem]{Remark}
\newtheorem{example}[theorem]{Example}
\newtheorem{conjecture}[theorem]{Conjecture}

\usepackage{etoolbox}
\newcommand{\creffixtype}[1]{\AtBeginEnvironment{#1}{\crefalias{theorem}{#1}}}
\creffixtype{theorem}
\creffixtype{proposition}
\creffixtype{lemma}
\creffixtype{corollary}
\creffixtype{definition}
\creffixtype{remark}
\creffixtype{example}
\creffixtype{conjecture}
\crefname{conjecture}{Conjecture}{Conjectures}
\Crefname{conjecture}{Conjecture}{Conjectures}

\newcommand{\Z}{\mathbb{Z}}
\newcommand{\Lieg}{\mathfrak{g}}
\newcommand{\Liet}{\mathfrak{t}}
\newcommand{\Lie}{\operatorname{Lie}}
\newcommand{\uu}{\mathfrak{u}}
\newcommand{\nn}{\mathfrak{n}}
\newcommand{\U}{\mathcal{U}}
\newcommand{\V}{\mathcal{V}}
\newcommand{\rk}{\operatorname{rank}}
\newcommand{\Hom}{\operatorname{Hom}}
\newcommand{\Ext}{\operatorname{Ext}}
\newcommand{\End}{\operatorname{End}}
\newcommand{\Aut}{\operatorname{Aut}}
\newcommand{\Lin}{\operatorname{Lin}}
\newcommand{\Mat}{\operatorname{Mat}}
\newcommand{\GL}{\operatorname{GL}}
\newcommand{\tr}{\operatorname{tr}}
\newcommand{\dimv}{\operatorname{\underline{dim}}}
\newcommand{\supp}{\operatorname{supp}}
\newcommand{\rest}{\big |}
\newcommand{\rad}[1]{#1_{\operatorname{unip}}}
\newcommand{\orbit}{\mathcal{O}}
\newcommand{\Sat}{\operatorname{Sat}}
\newcommand{\pSat}{\operatorname{p-Sat}}
\newcommand{\chr}{\operatorname{char}}
\newcommand{\pr}{\operatorname{pr}}
\newcommand{\diag}{\mathrm{d}}
\newcommand{\charexp}{\operatorname{p}}
\newcommand{\grph}{\mathcal{G}}
\newcommand{\Multgrp}{\mathbb{G}_{\operatorname{m}}}
\newcommand{\Addgrp}{\mathbb{G}_{\operatorname{a}}}
\newcommand{\IM}{\Delta}
\newcommand{\coinv}{\operatorname{coinv}}
\newcommand{\homr}{\vdash}
\newcommand{\homrt}{\Vdash}
\newcommand{\extr}{\prec}
\newcommand{\PVS}{PVS}

\title{A greedy open-orbit criterion for solvable algebraic group actions, with applications to Lusztig's nilpotent varieties}

\author{Erez Lapid\\
Department of Mathematics\\
Weizmann Institute of Science\\
Rehovot, Israel\\
\texttt{erez.lapid@weizmann.ac.il}}
\date{August 2026}

\begin{document}
\maketitle

\begin{abstract}
Let $G$ be a connected solvable algebraic group acting rationally on a finite-dimensional vector space $U$.
Using a $G$-stable complete flag, we formulate a successive-quotient procedure that decides whether
$U$ contains an open $G$-orbit. When the procedure succeeds, it constructs an open-orbit vector of minimum
support cardinality and determines the image of a generic stabilizer in the maximal torus quotient.
We also give an infinitesimal version detecting open separable orbits. We apply the criterion to the action of $\Aut_Q(M)$
on $\Ext^1_Q(M,M)^*$ where $M$ is a multiplicity-free representation of a Dynkin quiver.
Rigidity of the corresponding component of Lusztig's nilpotent variety is thereby reduced to a rank test together with an acyclicity condition on a graph of active extension coordinates;
the connected components of the resulting forest determine the generic indecomposable decomposition.
For equioriented type $A$ this yields an explicit algorithm for a family of multisegments encoded by incidence matrices, including nonregular examples with repeated beginnings or ends.
\end{abstract}

\setcounter{tocdepth}{1}
\tableofcontents

\section{Introduction}\label{sec:intro}
Prehomogeneous vector spaces (\PVS s) are rational representations of algebraic groups that admit an open (i.e., dense) orbit.
They arise throughout the study of nilpotent orbits.

\PVS s for reductive groups are well studied (\cites{MR430336, MR1944442}).
The purpose of this paper is twofold. First, we give a uniform successive-quotient criterion for the existence of an open orbit
in an arbitrary rational representation of a connected solvable algebraic group.
Then, we apply it in the context of Lusztig's nilpotent varieties for quivers of Dynkin type to get a combinatorial/linear algebraic algorithm
for rigidity of irreducible components in the multiplicity-free case. We explicate it further in type $A$.

Let $G$ be a connected solvable algebraic group over an algebraically closed field $K$ of any characteristic and
let $V$ be a rational $d$-dimensional representation of $G$ over $K$.
If $G$ is a torus, $V$ is a {\PVS} if and only if the weights of $V$ are $\Z$-independent characters.
In general, it turns out that there is a simple greedy procedure to determine whether $V$ admits an open orbit.
At the same time it provides a minimal-cardinality support set $\Psi\subset X^*(G)$ for vectors in an open orbit.

To describe the procedure, choose an invariant complete flag $V=V_0\supset V_1\supset\dots\supset V_d=0$ for $V$ (which exists by the Lie--Kolchin theorem).
Fix a maximal torus $T$ of $G$ and a basis $e_1,\dots,e_d$ of $T$-eigenvectors, with characters $\chi_1,\dots,\chi_d\in X^*(T)=X^*(G)$, such that $e_i\in V_{i-1}\setminus V_i$.
In the $k$-th step we are given a subset $\Phi_{k-1}\subset\{1,\dots,k-1\}$ such that the $G$-orbit of any vector $\sum_{i=1}^{k-1}\lambda_i e_i$
with support set $\supp v:=\{i:\lambda_i\ne0\}$ equal to $\Phi_{k-1}$ is dense modulo $V_{k-1}$.
If this orbit is dense modulo $V_k$ then we let $\Phi_k=\Phi_{k-1}$ and continue.
Otherwise, we set $\Phi_k=\Phi_{k-1}\cup\{k\}$, and continue if the set $\{\chi_i: i\in\Phi_k\}\subset X^*(T)$ is $\Z$-independent and abort otherwise.

If all steps $k=1,\dots,d$ go through then the set of characters $\Psi=\{\chi_i: i\in\Phi\}$ is $\Z$-independent and any vector $v$
with $\supp v=\Phi_d$ has an open orbit, while no vector of smaller size support has an open orbit.
In contrast, if the procedure aborts at $k$ then $V$, and in fact $V/V_k$ is not a \PVS.

In the case where a dense orbit exists, the set $\Psi$ may depend on the choice of flag.
However, the $\charexp$-saturation in $X^*(T)$ of $\langle\Psi\rangle$, and therefore the size of $\Psi$, are independent of the flag.
Moreover, the joint kernel of $\Psi$ in $T$ is a maximal diagonalizable subgroup of the stabilizer of any $v$ with $\supp v=\Phi$.

\begin{theorem}
Let $G$ be a connected solvable algebraic group acting rationally on a finite-dimensional vector space $U$, and fix a $G$-stable complete flag.
The successive-quotient procedure either:
\begin{enumerate}
\item terminates successfully and produces a vector with open $G$-orbit, or
\item terminates at a quotient $U_k$ on which $G$ has no open orbit.
\end{enumerate}
In the successful case, the number of active coordinates is the minimum support cardinality of an open-orbit vector.
It is also equal to the codimension of a generic orbit of $\rad{G}$.
The active characters determine the image of a generic stabilizer in the maximal torus quotient of $G$ and the associated
$\charexp$-saturated lattice is independent of the chosen flag.
\end{theorem}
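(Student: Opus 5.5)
We argue by induction on $k$, using the structure $G=T\ltimes\rad{G}$, where $\rad{G}$ is the unipotent radical and $T$ is identified with the maximal torus quotient of $G$. The inductive invariant at step $k$ is the following: for every $v=\sum_{i<k}\lambda_ie_i$ with $\supp v=\Phi_{k-1}$, the orbit $G\cdot v$ is dense in $U/U_{k-1}$, and the characters $\{\chi_i:i\in\Phi_{k-1}\}$ are $\Z$-independent. Set $H_{k-1}=\operatorname{Stab}_G(v+U_{k-1})$ for the stabilizer in $U/U_{k-1}$. The line $U_{k-1}/U_k$ is stable under $H_{k-1}$, and $H_{k-1}$ acts on the fibre $v+U_{k-1}/U_k$ by affine maps. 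Concretely, there is an affine action $h\cdot(v+te_k)=v+(\chi_k(h)t+c(h))e_k \bmod U_k$, where $c$ is a cocycle.

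Density of $G\cdot v$ in $U/U_k$ is equivalent to $H_{k-1}$ having a dense orbit on this affine line. This is a dimension count: $\dim U/U_k=\dim U/U_{k-1}+1$, and the fibres of $G\cdot v\to U/U_{k-1}$ are the $H_{k-1}$-orbits. An affine action of the (possibly disconnected) group $H_{k-1}$ on $\mathbb{A}^1$ has a dense orbit if and only if it is non-trivial on the identity component. The cocycle case (orbit already dense) leads to $\Phi_k=\Phi_{k-1}$. Otherwise $c$ is a coboundary on $H_{k-1}^\circ$. After translating, which we may do because density was already shown for all $v$ with the given support, so we can choose the representative with $\lambda_k=0$, the action becomes linear through $\chi_k$. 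There is then a dense orbit through $v+e_k$ exactly when $\chi_k$ is nontrivial on $H_{k-1}^\circ$.

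The key lemma, to be proved alongside the induction, is that in the non-dense case the image of $H_{k-1}^\circ$ in $T$ is, up to finite index and conjugacy, the joint kernel $\bigcap_{i\in\Phi_{k-1}}\ker\chi_i$. To prove it, we show $H_{k-1}$ contains a maximal torus-type subgroup $S=\bigcap_{i\in\Phi_{k-1}}\ker\chi_i\subset T$, since $v$ is a sum of $T$-eigenvectors. We then show $S$ surjects onto the torus quotient of $H_{k-1}^\circ$ up to finite index, by a dimension count: $\dim H_{k-1}=\dim G-(k-1)$, together with the inductive knowledge of the unipotent part. Granting this, $\chi_k$ is trivial on $H_{k-1}^\circ$ iff $\chi_k$ is a rational combination of the $\chi_i$, $i\in\Phi_{k-1}$. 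Independence of $\Phi_k$ is then exactly the condition for continuing, and failure shows $U/U_k$ has no open orbit: every $H_{k-1}$-orbit on the fibre is finite, and $v$ is generic in $U/U_{k-1}$. Whether the abort criterion should read $\Z$- or $\mathbb{Q}$-independence, which differ by $\charexp$-torsion issues, will have to be checked; this is where the $\charexp$-saturation enters.

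Once the induction is established, the remaining claims follow.

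\textbf{Minimality.} Any open-orbit vector $w$ yields a torus-generic stabilizer of dimension $\dim T-|\Psi|$. On the other hand, $\operatorname{Stab}(w)\supset\bigcap_{i\in\supp w}\ker\chi_i$ after conjugating $w$ into $T$-eigen form. Comparing dimensions gives $|\supp w|\ge|\Psi|$.

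\textbf{Codimension of generic $\rad{G}$-orbit.} We have $\dim G-\dim\operatorname{Stab}=d$ and $\operatorname{Stab}/\operatorname{Stab}_{\rad{G}}$ has dimension $\dim T-|\Psi|$. Hence $\dim \rad{G}\cdot w=d-|\Psi|$.

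\textbf{Flag independence.} The image of the generic stabilizer in $T$ is intrinsic, and its character annihilator is the $\charexp$-saturation of $\langle\Psi\rangle$.

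The main obstacle is the key lemma identifying the torus part of $H_{k-1}$ with the joint kernel, especially in positive characteristic where stabilizers may be non-reduced; there I would work with the group-scheme stabilizer and take saturation.
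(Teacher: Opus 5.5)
\textbf{Verdict: there is a gap in the final claim.} Your architecture is the paper's: induction along the flag, the affine action of $H_{k-1}$ on the fibre line with linear part $\chi_k$, and the identification of the torus part of $H_{k-1}$ with the joint kernel $S_{k-1}$ of the active characters. Working only with identity components is enough for three of the claims: the dichotomy, the minimality of $\#\Phi$, and the codimension formula. The invariant ``the image of $H_{k-1}^\circ$ in $T$ is $S_{k-1}^\circ$'' propagates by the sandwich $S_k\subseteq H_k\subseteq H_{k-1}$, together with $H_k^\circ\subseteq\ker\tilde\chi_k$ at activated steps. This route even avoids the paper's case of a finite nonzero translation group.

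However, your key lemma holds only ``up to finite index''. It therefore determines only the identity component of the image $D$ of a generic stabilizer in $G^{\diag}$: you get $S_\Psi\subseteq D$ and $D^\circ=S_\Psi^\circ$. That yields flag-independence of $\Sat(\langle\Psi\rangle)$ only. The theorem asserts $D=S_\Psi$ and flag-independence of $\pSat(\langle\Psi\rangle)$, which is strictly finer; in characteristic $0$ it says $\langle\Psi\rangle$ itself is intrinsic. Your ``Flag independence'' paragraph asserts exactly what your lemma does not deliver.

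\textbf{The missing ingredient} is to carry the exact invariant $H_{k-1}=S_{k-1}\ltimes\rad{(H_{k-1})}$.
\begin{itemize}
\item At a non-activated step it propagates trivially, since $S_{k-1}=S_k\subseteq H_k\subseteq H_{k-1}$.
\item At an activated step, write $h=su$ with $s\in S_{k-1}$ and $u\in\rad{(H_{k-1})}$. Then $h$ sends $\hat w+\lambda e_k$ to $\hat w+\chi_k(s)(\lambda+\beta(u))e_k$. So $H_k=S_k\ltimes\rad{(H_{k-1})}$ holds provided $\beta$ vanishes on all of $\rad{(H_{k-1})}$, not merely on its identity component.
\end{itemize}
In characteristic $p$, $\rad{(H_{k-1})}$ can be disconnected (e.g. like the kernel of $x\mapsto x^p-x$), and $\beta$ could then have finite nonzero image $B$. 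Elements $su$ with $\beta(u)=b\in B\setminus\{0\}$ and $\chi_k(s)=\lambda/(\lambda+b)$ would then push the image of $H_k$ beyond $S_k$.

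The paper rules this out with \cref{lem:fiber}, part (3). For $t\in S_{k-1}$ one has $\beta(tut^{-1})=\chi_k(t)\beta(u)$, so the finite group $B$ is stable under multiplication by $\chi_k(S_{k-1})$. This forces $\chi_k$ to be trivial on $S_{k-1}^\circ$, so the step aborts anyway. Hence in a successful run $\rad{(H_{k-1})}$ acts trivially at every activated step, and $G_v=S_\Psi\ltimes\rad{(G_v)}$ holds exactly.

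Two of your side remarks also need correcting:
\begin{itemize}
\item Disconnectedness of reduced unipotent stabilizers is where positive characteristic actually bites, not non-reducedness. All groups here are reduced, and switching to group-scheme stabilizers would change the statement being proved.
\item Your $\Z$ versus $\mathbb{Q}$ worry is moot: $\Z$- and $\mathbb{Q}$-independence coincide for elements of a lattice. The $\charexp$-saturation enters only through the duality between reduced closed subgroups of $T$ and $\charexp$-saturated sublattices of $X^*(T)$.
\end{itemize}
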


We also consider the infinitesimal version, to detect separable open orbits. This is particularly useful since then the algorithm
becomes a linear algebra test.

The use of successive one-dimensional quotients is closely related to the
orbit algorithms of B\"urgstein--Hesselink \cite{MR879187} and Goodwin \cite{MR2123984} for Borel subgroups acting on
submodules of nilradicals.  Dense-orbit and finite-orbit questions for related Borel and
parabolic adjoint actions were studied by Hille--R\"ohrle \cite{MR1669178},
Br\"ustle--Hille--R\"ohrle--Zwara \cite{MR1736958}, and Goodwin--Hille \cite{MR2356319}, among others, often through the representation
theory of auxiliary quasi-hereditary algebras, the relevant algebra being the one attached by
Hille--Vossieck \cite{MR2033108} to the radical bimodule of a hereditary algebra.
Closest in spirit to the present paper, Jensen--Su--Yu \cite{MR2481983} construct a Richardson
element for every seaweed subalgebra of $\mathfrak{gl}_n$ by an inductive procedure, and exhibit a seaweed
subalgebra of a simple Lie algebra of type $E_8$ with no dense orbit at all; their argument runs
through the quasi-hereditary algebra of \cite{MR2033108} and not through the preprojective algebra.
Jensen and Su also considered the distinct but
related conjugation action of $\Aut_{KQ}(P)$ on $\operatorname{rad}\End_{KQ}(P)$ for a projective quiver
module $P$, and characterized Dynkin quivers by a uniform dense-orbit property \cite{MR3228477}.  Our
purpose here is to isolate the flagwise successive-quotient argument for an arbitrary
rational representation of a connected solvable algebraic group.  Besides deciding
prehomogeneity, the procedure constructs an open-orbit vector with minimum coordinate
support and determines the image of a generic stabilizer in the maximal torus quotient.

\Cref{sec:setup} proves the group-level criterion, including its
positive-characteristic and infinitesimal versions.  In \cref{sec:quiver} we apply it to the
solvable action of $\Aut_Q(M)$ on $\Ext^1_Q(M,M)^*$ for a multiplicity-free Dynkin-quiver
module $M$ and obtain a rank test together with a forest condition on active extension
coordinates.  \Cref{sec: typeA} specializes to equioriented type $A$ and gives an explicit
algorithm for multisegments encoded by incidence matrices, including examples with
repeated beginnings or ends. \Cref{sec:appendix} treats permutation matrices: the known
rigidity--smoothness equivalence is strengthened to strong rigidity, and the components of
the active forest are related to the generic decomposition into indecomposable
preprojective modules.

The application in \cref{sec:quiver} is our main motivation. 
The irreducible components of $\Lambda(V)$ canonically parametrize the canonical-basis elements \cite{MR1088333}, as well as 
the semicanonical basis \cite{MR1758244} and the crystal $B(\infty)$ \cite{MR1458969}.
Rigid components are those which contain an open orbit. They became important through the works of Geiss, Leclerc, Schr\"oer and others \cite{MR2115084}, \cite{MR2144987}.
The corresponding real simple modules over quiver Hecke (KLR) algebras play a prominent role in monoidal categorification of cluster algebras \cite{MR3758148}.
Leclerc's imaginary vectors \cite{MR1959765}, interpreted in preprojective-geometric terms by Geiss and Schr\"oer \cite{MR2115084}, provided foundational examples related to non-rigidity.
Rigidity is the geometric shadow of deep representation-theoretic properties: in type $A$, 
under the Zelevinsky correspondence, it is conjecturally equivalent to $\square$-irreducibility of the associated irreducible representation of $\GL_n$ over a non-archimedean local field
(see \cite{MR4847251}*{\S3}).
Therefore it is pleasing that there is an efficient deterministic algorithm which decides rigidity at least in the multiplicity-free case.

\section{Group-level procedure}\label{sec:setup}

\subsection{Notation and conventions}
Throughout, $K$ is an algebraically closed field of
\emph{arbitrary} characteristic, and $G$ is a connected solvable linear algebraic group over $K$.
We refer to \cite{MR1102012} for standard facts about algebraic groups, including solvable ones.
In particular, by convention all algebraic groups are \emph{reduced} unless stated otherwise.

Denote by $\rad{G}$ the unipotent radical of $G$. It consists of the unipotent elements of $G$.
Denote by $G^{\diag}$ the quotient $G/\rad{G}$, which is a torus.
We fix a maximal torus $T\simeq G^{\diag}$, so that $G=T\ltimes \rad{G}$.
The pullback $\chi\mapsto\tilde\chi$ identifies $X^*(T)$ with $X^*(G)$.
(The inverse map is the restriction.)

For a closed (not necessarily connected) subgroup $H$ of $G$ we will write $\rad{H}$ for the closed subgroup
of unipotent elements, i.e., $H\cap \rad{G}$.\footnote{Milne's notation is $G_{\operatorname{u}}$ but we prefer not to use
it in order not to confuse with the stabilizer.}
In characteristic zero, or if $H$ is connected, this coincides with the unipotent radical
of $H$, i.e., the largest normal connected subgroup of $H$. However, in characteristic $p>0$ we may have
non-connected (reduced) unipotent groups, such as the kernel of the additive homomorphism $x\mapsto x^p-x$.

Because $H$ is triangularizable, the quotient $H^{\diag}=H/\rad{H}$ is diagonalizable, and 
we have a decomposition $H=S\ltimes \rad{H}$ where $S$ is a maximal diagonalizable subgroup of $H$, which is unique up to conjugation by $\rad{H}$
\cite{MR3729270}*{Ch. 16}.
We define the rank by
\[
\rk H=\dim H^{\diag}.
\]

Let $\charexp$ be the characteristic exponent of $K$.  For a sublattice $A\subset X^*(T)$ set
\[
 \Sat(A)=\{\chi:n\chi\in A\text{ for some }n\ge1\},\qquad
 \pSat(A)=\{\chi:\charexp^r\chi\in A\text{ for some }r\ge0\}.
\]
Thus $\pSat(A)=A$ in characteristic zero.  Since all algebraic groups in
this paper are reduced, kernels of characters are understood with their reduced
structure.  The assignment
\begin{equation} \label{eq: psatrec}
 S\longmapsto\{\chi\in X^*(T):\chi\rest_S=1\}
\end{equation}
is then an inclusion-reversing bijection from reduced closed subgroups of $T$ to
$\charexp$-saturated sublattices of $X^*(T)$; its inverse sends $A$ to
$\bigcap_{\chi\in A}\ker\chi$.

Let $U$ be a rational representation of $G$ of dimension $N$.
By the Lie--Kolchin theorem, there exists a flag
\[
U=F_0\supset F_1\supset\dots\supset F_N=0
\]
of $G$-submodules, which we fix.
For every $k=1,\dots,N$ let $\tilde\chi_k\in X^*(G)$ be the character by which $G$ acts on $F_{k-1}/F_k$.
Fix a basis $e_1,\dots,e_N$ of $T$-eigenvectors of $U$ with $e_k\in F_{k-1}\setminus F_k$.
Thus, $F_k=\langle e_{k+1},\dots,e_N\rangle$, and the $T$-weight of $e_k$ is $\chi_k$.

For $\lambda\in K^N$ put $v_\lambda:=\sum_j\lambda_je_j$.
We write $\supp\lambda$ for the support of $\lambda$, i.e., the set
\[
\supp\lambda=\{j:\lambda_j\ne0\}.
\]
Likewise, for $v\in U$ we write $\supp v$ for the support of the coordinates of $v$ with respect to $e_1,\dots,e_N$,
that is $\supp v_\lambda=\supp\lambda$.

Set $U_k=U/F_k$, $k=0,\dots,N$. A basis is given by the images of $e_1,\dots,e_k$ in $U_k$. The support of an element $v\in U_k$ is defined
in a similar way. It is a subset of $\{1,\dots,k\}$.

\subsection{The affine-fiber lemma}
The fibers of the projections $\pi_k\colon U_k\to U_{k-1}$ are affine lines, parameterized by the $e_k$-coordinate.
Denote by $\hat w$ the base point of the fiber with $e_k$-coordinate $0$, so that
$\pi_k^{-1}(w)$ is the image of the affine map $\alpha_w(s)=\hat w+se_k$, $s\in K$.
The following lemma is standard but crucial.

\begin{lemma}\label{lem:fiber}
Let $w\in U_{k-1}$ and $H=G_w$ its stabilizer.
Then, $H$ acts on the fiber $\pi_k^{-1}(w)$ over $w$ by affine transformations with linear part $\tilde\chi_k$, i.e.
\[
h\cdot\alpha_w(s)=\alpha_w(\tilde\chi_k(h)s+\beta_w(h)),\ \ \ h\in H, s\in K
\]
where $\beta_w(h)$ does not depend on $s$. In particular,
\begin{enumerate}
\item \label{part: action} $\beta_w$ restricts to a homomorphism $\rad{H}\rightarrow K$, so that $\rad{H}$ acts by translation on the fiber.
\item \label{part: torusaction} Let $S\subseteq T$ be a subgroup such that $\chi_j(S)=1$ for every $j\in\supp w$. Then, $S$ fixes
$w$ and $\hat w$ and acts linearly on the fiber: $t\cdot\alpha_w(s)=\alpha_w(\chi_k(t)\,s)$.
\item \label{part: Bfinite} Suppose that $\beta_w\rest_{\rad{H}}$ is neither $0$ nor surjective.
(This can only happen if $\chr K>0$.) Then, $\chi_k$ is in $\Sat(\langle\chi_j:j\in\supp w\rangle)$.
\end{enumerate}
\end{lemma}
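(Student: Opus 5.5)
The plan is to compute directly how $G$ acts on the top coordinate of $U_k$, and then specialize to the stabilizer $H$ and its pieces.

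\textbf{The affine formula.} Since $F_{k-1}/F_k$ is spanned by the image of $e_k$ and $G$ acts on it by $\tilde\chi_k$, for any $g\in G$ and $u\in U_k$ we have $g(u+se_k)=gu+\tilde\chi_k(g)se_k$ in $U_k$. Take $u=\hat w$ and $g=h\in H=G_w$. Then $\pi_k(h\hat w)=hw=w$, so $h\hat w=\hat w+\beta_w(h)e_k$ for a unique scalar $\beta_w(h)$. This gives
\[
h\alpha_w(s)=\alpha_w(\tilde\chi_k(h)s+\beta_w(h)),
\]
with $\beta_w(h)$ independent of $s$. Composing two such maps yields the cocycle identity
\[
\beta_w(hh')=\tilde\chi_k(h)\beta_w(h')+\beta_w(h).
\]
The map $h\mapsto\beta_w(h)$ is a morphism, since it is a coordinate of $h\hat w$.

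\textbf{Part (\ref{part: action}).} Characters of $G$ are trivial on $\rad{G}$, so $\tilde\chi_k\rest_{\rad{H}}=1$. The cocycle identity then says $\beta_w\rest_{\rad H}$ is a homomorphism $\rad H\to K$, and $\rad H$ acts on the fiber by translations.

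\textbf{Part (\ref{part: torusaction}).} Since $S\subseteq T$, each $t\in S$ acts on $e_j$ by $\chi_j(t)$, and $T$ preserves the coordinate decomposition. Write $w=\sum_{j\in\supp w}\lambda_j e_j$ in $U_{k-1}$, so that $\hat w$ is the same sum in $U_k$. Then $t\hat w=\sum\chi_j(t)\lambda_je_j=\hat w$, so $t$ fixes both $\hat w$ and $w$. Hence $S\subseteq H$ and $\beta_w(t)=0$. Also $\tilde\chi_k(t)=\chi_k(t)$ because $t\in T$, which gives the linear action.

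\textbf{Part (\ref{part: Bfinite}), the main step.} Set $A=\langle\chi_j:j\in\supp w\rangle$ and $S_0=\bigcap_{\chi\in A}\ker\chi$, a diagonalizable subgroup of $T$ (reduced structure). By part (\ref{part: torusaction}), $S_0\subseteq H$. Conjugation by $S_0$ preserves $\rad H=H\cap\rad G$. For $t\in S_0$ and $u\in\rad H$, the cocycle identity together with $\beta_w(t)=0$ and $\beta_w(u)\in$ a homomorphism gives
\[
\beta_w(tut^{-1})=\chi_k(t)\beta_w(u).
\]
Hence the image $B=\beta_w(\rad H)\subseteq K$ is stable under multiplication by $\chi_k(S_0)$.

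$B$ is the image of an algebraic group homomorphism into $\Addgrp$, so it is a closed subgroup of $K$. By hypothesis it is neither $0$ nor all of $K$. A proper closed subgroup of $\Addgrp$ is finite, so $B$ is finite and nonzero. Pick $b\ne0$ in $B$. Then $\chi_k(S_0)b\subseteq B$, so $\chi_k(S_0)$ is a finite subgroup of $K^\times$. It follows that $\chi_k^n$ is trivial on $S_0$ for some $n\ge1$.

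By the bijection \eqref{eq: psatrec}, the characters trivial on $S_0$ form $\pSat(A)$. So $n\chi_k\in\pSat(A)$, which implies $\chi_k\in\Sat(A)$.

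The subtlety is in identifying $S_0$ and its annihilator. I will argue via \eqref{eq: psatrec} with reduced kernels, as above; the $\charexp$-saturation is absorbed into $\Sat$. Finally, characteristic zero is excluded in part (\ref{part: Bfinite}) because then every nonzero closed subgroup of $\Addgrp$ that is the image of a connected-or-not unipotent group is all of $K$: finite subgroups of $K$ are trivial in characteristic zero.
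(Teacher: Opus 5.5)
Your proof is correct and takes essentially the same approach as the paper. Both arguments use the twisted equivariance $\beta_w(tut^{-1})=\chi_k(t)\beta_w(u)$ for $t$ in the joint kernel of the $\chi_j$, $j\in\supp w$, together with finiteness of the proper nonzero closed subgroup $B\subset\Addgrp$; the only cosmetic difference is that you conclude through $n\chi_k\in\pSat(A)$ via \eqref{eq: psatrec}, whereas the paper says $\chi_k$ is trivial on $S^\circ$, and both give $\chi_k\in\Sat(A)$.
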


\begin{proof}
Only the last part is non-obvious. Let $S=\cap_{j\in\supp w}\ker\chi_j\subset G_w$.
For $t\in S$ and $u\in \rad{H}$ the affine formula gives
\[
\beta_w(tut^{-1})=\chi_k(t)\beta_w(u).
\]
Let $B$ be the image of $\beta_w\rest_{\rad{H}}$. By assumption, $B$ is neither $0$ nor $\Addgrp$.
Since $B$ is a closed subgroup of $\Addgrp$, $B$ must be finite.
Since $S$ normalizes $\rad{H}$, it follows that $\chi_k(t)B=B$ for every $t\in S$.
Therefore, $\chi_k(S)$ is contained in the finite group
\[
\{c\in K^\times:cB=B\}.
\]
It follows that $\chi_k$ is trivial on $S^\circ$, and hence $\chi_k$ is in $\Sat(\langle\chi_j:j\in\supp w\rangle)$.
\end{proof}

\subsection{The algorithm}
We now describe the greedy procedure.

\begin{definition}[the greedy procedure] \label{def:greedy}
Define inductively subsets $\Phi_k\subset\{1,\dots,N\}$, $k=0,\dots,N$ as follows.
Set $\Phi_0:=\emptyset$.
For $k=1,\dots,N$ let $\Phi_k=\Phi_{k-1}$ if $G\cdot v$ is dense in $U_k$
for any (or all) $v\in U_k$ with support $\Phi_{k-1}$.
Otherwise, set $\Phi_k=\Phi_{k-1}\cup\{k\}$ (an \emph{activation});
if the characters $\chi_j$, $j\in\Phi_k$, are $\Z$-dependent \emph{abort}.

If all $N$ steps complete, the procedure \emph{succeeds}; its output is the \emph{active set}
$\Phi:=\Phi_N\subseteq\{1,\dots,N\}$, the corresponding sets $\Psi=\{\chi_j:j\in\Phi\}$,  $\tilde\Psi=\{\tilde\chi_j:j\in\Phi\}$ of \emph{active weights}
and the subgroup
\begin{equation} \label{def: Spsi}
S_\Psi=\cap_{\chi\in\Psi}\ker\chi\subset T.
\end{equation}
\end{definition}

Note that for every pending step $k$ the characters $\chi_j$, $j\in\Phi_{k-1}$, are $\Z$-independent (otherwise the run would already
have aborted), so the algorithm does not depend on the choice of $v$ with support $\Phi_{k-1}$
since $T$ acts transitively on the set of these vectors.
Likewise, it does not depend on the choice of $e_i$'s. However, it a priori depends on the choice of invariant flag.
In fact, $\Psi$ itself depends on this choice, although as we will soon see the success of the procedure does not.

For every $k$ define the subgroup
\[
S_k:=\cap_{j\in\Phi_k}\ker\chi_j\subset T.
\]

The key fact for analyzing the procedure is the following
\begin{lemma} \label{lem: kstep}
Let $k$ be a step reached by the procedure.
Let $v\in U_k$ have support $\Phi_k$, put $H=G_v$, $w=\pi_k(v)\in U_{k-1}$ and $H'=G_w$. Then,
\begin{enumerate}
\item The step $k$ is activated if and only if $\rad{H'}$ does not act transitively on the fiber
$\pi_k^{-1}(w)$. In characteristic $0$ this is equivalent to $\rad{H'}$ acting trivially on the fiber.
\item If $k$ is activated, then the following are equivalent.
\begin{enumerate}
\item The process aborts at $k$.
\item $G\cdot v$ is not dense in $U_k$.
\item $S_k^\circ=S_{k-1}^\circ$.
\end{enumerate}
\item If $\rad{H'}$ acts either trivially or transitively on the fiber (which always happens in characteristic $0$), then $H=S_k\ltimes \rad{H}$.
In particular, $S_k$ is a maximal diagonalizable subgroup of $H$.
\item If $\rad{H'}$ acts neither transitively nor trivially on the fiber (which can only happen in positive characteristic)
then $k$ activates and aborts and $(G_{v'})^\circ=(H')^\circ$ for every $v'\in\pi_k^{-1}(w)$.
\end{enumerate}
\end{lemma}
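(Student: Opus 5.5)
The plan is to proceed by induction on $k$, carrying the hypothesis that a vector with support $\Phi_{k-1}$ has dense orbit in $U_{k-1}$. By the inductive form of part (3), its stabilizer is $S_{k-1}\ltimes\rad{(H')}$ (or at least $(H')^\circ=S_{k-1}^\circ\ltimes\rad{(H')}^\circ$ in the degenerate case). Fix $w\in U_{k-1}$ with support $\Phi_{k-1}$ and $H'=G_w$. Since $G\cdot w$ is dense in $U_{k-1}$, the orbit through a point $v'$ of the fiber is dense in $U_k$ exactly when $H'\cdot v'$ is dense in the line $\pi_k^{-1}(w)$. By \cref{lem:fiber}, $H'$ acts on this line by affine maps $s\mapsto\tilde\chi_k(h)s+\beta_w(h)$.

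For part (1), we use $H'=S_{k-1}\ltimes\rad{H'}$ and the fact that $S_{k-1}$ fixes $\hat w$ and scales the fiber by $\chi_k$ (\cref{lem:fiber}\eqref{part: torusaction}). If $\beta_w(\rad{H'})=K$, then $\rad{H'}$ is transitive, so the orbit of $\hat w$, whose support is $\Phi_{k-1}$, is dense. Hence there is no activation. Otherwise the image of $\beta_w$ on $\rad{H'}$ is finite, and in characteristic $0$ it is $0$. Then the $H'$-orbit of $\hat w$ is $\{\hat w\}+B$ with $B$ finite, since $S_{k-1}$ scales $0$ to $0$ and $B$ is stable under the scaling. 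So the orbit is not dense and $k$ activates. Conversely, if $k$ does not activate, the $H'$-orbit of $\hat w$ is dense; since $S_{k-1}$ fixes $\hat w$, this forces $\rad{H'}$ to be transitive.

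For part (2), assume activation and take $v=\hat w+se_k$ with $s\ne0$. If $\rad{H'}$ acts trivially, then $H'\cdot v=\{\chi_k(t)s:t\in S_{k-1}\}$. This is dense if and only if $\chi_k$ is nontrivial on $S_{k-1}^\circ$, which is equivalent to $\chi_k\notin\Sat\langle\chi_j:j\in\Phi_{k-1}\rangle$. That in turn is equivalent both to $\Z$-independence of $\{\chi_j\}_{j\in\Phi_k}$ (given independence of $\Phi_{k-1}$) and to $S_k^\circ\ne S_{k-1}^\circ$. This proves (a)$\Leftrightarrow$(b)$\Leftrightarrow$(c). In the finite nonzero $B$ case, \cref{lem:fiber}\eqref{part: Bfinite} gives $\chi_k\in\Sat$, so we get dependence, the process aborts, $S_k^\circ=S_{k-1}^\circ$, and every orbit in the fiber is finite, hence not dense. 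This last argument also gives part (4). The orbit $H'\cdot v'$ is finite, so $G_{v'}\cap H'$ has finite index in $H'$, and $G_{v'}\subset H'$ gives $(G_{v'})^\circ=(H')^\circ$.

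For part (3), we compute $H=G_v=(H')_v$ from the affine action. In the transitive case $k$ is not activated, $S_k=S_{k-1}$, $v=\hat w$ (support $\Phi_{k-1}$), and $H=S_{k-1}\ltimes\ker(\beta_w|_{\rad{H'}})$. The point to check is that conjugating $S_{k-1}$ by an element of $\rad{H'}$ lets us take the point fixed by the torus to be $\hat w$ itself, which holds because $S_{k-1}$ already fixes $\hat w$. In the trivial case, $\rad{H'}\subset H$, so $H=(S_{k-1}\cap\ker\chi_k)\ltimes\rad{H'}=S_k\ltimes\rad{H}$. The main obstacle is the bookkeeping in this step. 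We must ensure the semidirect decomposition $H'=S_{k-1}\ltimes\rad{H'}$ uses exactly $S_{k-1}$ and not merely a conjugate, and we must handle the non-connected $S_k$ and $\rad{H}$ correctly. For this I would use that $\beta_w$ vanishes on $S_{k-1}$ by \cref{lem:fiber}\eqref{part: torusaction}, so the decomposition restricts cleanly.
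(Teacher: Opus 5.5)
Your proposal is correct and is essentially the paper's proof: induction on $k$ with $H'=S_{k-1}\ltimes\rad{H'}$, the trichotomy $B=K$, $B=0$, $B$ finite nonzero for the image of $\beta_w$ on $\rad{H'}$, and \cref{lem:fiber}. The differences are cosmetic: you test density of $H'$-orbits on the line instead of using $\dim H'\le\dim H+1$, and in part (4) you use finiteness of the $H'$-orbits instead of showing $(H')^\circ$ fixes the fiber pointwise. Your hedge ``or at least $(H')^\circ=\dots$ in the degenerate case'' is unnecessary, because step $k$ being reached means step $k-1$ did not abort; part (4) at step $k-1$ then makes that action trivial or transitive, and part (3) gives the full $H'=S_{k-1}\ltimes\rad{H'}$, which your part (3) argument genuinely needs.
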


\begin{proof}
Clearly, $S_k\subset H$.

We use induction on $k$. 
By induction hypothesis (if $k>1$) $w$ has a dense orbit in $U_{k-1}$ and $H'=S_{k-1}\ltimes \rad{H'}$.
These statements hold trivially for $k=1$ as well.
Clearly $H\subset H'$. Moreover, since $G\cdot w$ is open in $U_{k-1}$,
\[
\dim H'\le\dim H+1,
\]
with equality precisely when $\dim G\cdot v=\dim U_k$, i.e. when the $G$-orbit of $v$ is dense in $U_k$.
Thus, $G\cdot v$ is dense in $U_k$ if and only if $H^\circ\ne (H')^\circ$.
Consider the action of $H'$ on the fiber $\pi_k^{-1}(w)$. 

By \cref{lem:fiber} \cref{part: action}, $\rad{H'}$ acts by translations on $\pi_k^{-1}(w)$.
The image $B$ of $\beta_w\rest_{\rad{H'}}$ in $K$ can be either finite or all of $K$.
There are three cases to consider:
\begin{enumerate}
\item $B=K$.
\item $B=0$.
\item $B$ is finite. (This can only happen if $\chr K>0$ because in characteristic $0$ all unipotent groups are connected.)
\end{enumerate} 
In the first case, $\rad{H'}$ acts transitively on $\pi_k^{-1}(w)$.
Since $G\cdot w$ is dense in $U_{k-1}$, $G\cdot v'$ is dense in $U_k$ for every $v'\in\pi_k^{-1}(w)$.
In particular this holds for $v'=\hat w$. Thus, $\Phi_k=\Phi_{k-1}$ and therefore $v=\hat w$ and $S_k=S_{k-1}$.
Since $S_{k-1}\subset H$ we obtain
\[
H=(S_{k-1}\ltimes \rad{H'})\cap H=S_{k-1}\ltimes (\rad{H'}\cap H)=S_{k-1} \ltimes \rad{H}=S_k\ltimes \rad{H}.
\]

Suppose now that $\rad{H'}$ acts trivially on $\pi_k^{-1}(w)$.
Then, $\rad{(G_{v'})}=\rad{H'}$ for all $v'\in\pi_k^{-1}(w)$.
Thus,
\[
G_{v'}=(S_{k-1}\ltimes \rad{H'})\cap G_{v'}=(S_{k-1}\cap G_{v'})\ltimes  \rad{H'}.
\]

We use \cref{lem:fiber} \cref{part: torusaction}.
First, it implies that $S_{k-1}\subset G_{\hat w}$ and hence $H'=G_{\hat w}$, so that $G\cdot\hat w$ is not dense.
Thus, $\Phi_k=\Phi_{k-1}\cup\{k\}$. 
Moreover, $S_{k-1}\cap H=S_{k-1}\cap\ker\chi_k=S_k$ since $v$ has non-zero $e_k$-coordinate.
Thus, $H=S_k\ltimes \rad{H'}=S_k\ltimes \rad{H}$.
Therefore, $G\cdot v$ is not dense in $U_k$ if and only if $S_k^\circ=S_{k-1}^\circ$, i.e., if and only if $S_{k-1}^\circ\subset\ker\chi_k$.
This happens if and only if $\chi_k$ is in $\Sat(\langle\chi_j:j\in\Phi_{k-1}\rangle)$ or in other words, if and only if the procedure aborts.

Finally, suppose that $B$ is finite but non-zero.
Since $B$ is finite, the image of $(\rad{H'})^\circ$ under $\beta_w$ is trivial.
Also, by \cref{lem:fiber} \cref{part: Bfinite}
$\chi_k$ is trivial on $S_{k-1}^\circ$.
Hence, $(H')^\circ=S_{k-1}^\circ\ltimes (\rad{H'})^\circ\subset G_{v'}$
for every $v'\in\pi_k^{-1}(w)$, so that $G_{v'}^\circ=H'^{\circ}$. Therefore $G\cdot v'$ is not dense.
Thus, $k$ activates and aborts and $S_k^\circ=S_{k-1}^\circ$.

The lemma follows.
\end{proof}

\subsection{Consequences -- correctness, generic stabilizer and minimum support}
\begin{corollary} \label{cor: algo}
If the procedure succeeds, then there exists $v\in U$ with a dense $G$-orbit $\orbit$.
In fact, every $v\in U$ with $\supp v=\Phi$ has a dense $G$-orbit and the stabilizer $G_v$ has a maximal diagonalizable subgroup $S_\Psi$.
In contrast, if the procedure aborts at $k$, then there is no dense $G$-orbit in $U$, or in fact in $U_k$.
In particular, the verdict of the procedure (success or fail) does not depend on the choice of flag or the $e_i$'s.
\end{corollary}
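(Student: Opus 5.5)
The plan is to read everything off \cref{lem: kstep}, applied at each step reached by the procedure. First I will check, by induction on $k$, the following invariant. If the procedure has reached step $k$ without aborting, then every $v\in U_k$ with $\supp v=\Phi_k$ has a dense $G$-orbit in $U_k$, and $G_v=S_k\ltimes\rad{(G_v)}$.

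The base case $k=0$ is trivial, since $U_0=0$ and $S_0=T$. For the inductive step, let $v\in U_k$ have support $\Phi_k$ and put $w=\pi_k(v)$. Then $\supp w=\Phi_{k-1}$, so the induction hypothesis applies to $w$; this is exactly the input \cref{lem: kstep} requires.
\begin{enumerate}
\item If $k$ is not activated, density of $G\cdot v$ in $U_k$ holds by definition of the procedure.
\item If $k$ is activated and the procedure does not abort, part (2) of \cref{lem: kstep} gives that $G\cdot v$ is dense.
\end{enumerate}
Non-abortion also excludes the case of part (4), since that case forces an abort. Hence part (3) applies and yields $G_v=S_k\ltimes\rad{(G_v)}$.

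Applying the invariant at $k=N$ gives the success statement: $U_N=U$, $\Phi_N=\Phi$ and $S_N=S_\Psi$. Thus every $v$ with $\supp v=\Phi$ has a dense orbit and $G_v=S_\Psi\ltimes\rad{(G_v)}$. Consequently $S_\Psi$ is a maximal diagonalizable subgroup of $G_v$, because it maps isomorphically onto $G_v/\rad{(G_v)}$.

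For the abort statement, suppose the procedure aborts at step $k$. Pick $v\in U_k$ with $\supp v=\Phi_k$. The invariant holds at $k-1$, and by parts (2) and (4) of \cref{lem: kstep} no $v'$ in the fiber $\pi_k^{-1}(\pi_k(v))$ has a dense orbit in $U_k$. It remains to pass from this one fiber to all of $U_k$, and this is the one step that needs an argument beyond quoting the lemma. Suppose some $x\in U_k$ had a dense orbit in $U_k$. That orbit would be open, and its image $\pi_k(G\cdot x)=G\cdot\pi_k(x)$ would be a dense orbit in $U_{k-1}$. Orbits are locally closed, so two dense orbits in the irreducible variety $U_{k-1}$ must coincide; hence $G\cdot\pi_k(x)=G\cdot w$ with $w=\pi_k(v)$. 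Translating $x$ by an element of $G$, we may assume $\pi_k(x)=w$. Then $x$ lies in the fiber over $w$ and has a dense orbit, a contradiction.

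So $U_k$ has no dense orbit. Neither does $U$: the projection $U\to U_k$ is $G$-equivariant and surjective, so the image of a dense orbit in $U$ would be a dense orbit in $U_k$.

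Finally, independence of the verdict follows because success and failure have now been characterized intrinsically, namely by whether $U$ has a dense $G$-orbit. This property involves neither the flag nor the choice of the $e_i$.
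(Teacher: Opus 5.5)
Your proposal is correct and follows essentially the same route as the paper. The success case is read off from \cref{lem: kstep}. For the abort case, you move a putative dense-orbit point into the fiber over a support-$\Phi_{k-1}$ point. There, \cref{lem: kstep} excludes the points with support $\Phi_k$, and the activation condition itself excludes $\hat w$; you invoke only parts (2) and (4), and those do not cover $\hat w$ in the trivial-action case. You justify the translation step (which the paper states in one line) more carefully, using the uniqueness of the dense orbit in $U_{k-1}$.
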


\begin{proof}
The first part is clear from \cref{lem: kstep}. For the converse, suppose on the contrary that the procedure aborts at $k$ but some $v\in U_k$ has a dense orbit in $U_k$.
Then, $v$ has a dense orbit in $U_{k-1}$, and therefore upon moving $v$ by $G$ we may assume that $\supp\pi_k(v)=\Phi_{k-1}$ (since the procedure did not abort before $k$).
Now, either $v=\hat w$ or $\supp v=\Phi_k$. The first possibility cannot hold because by assumption $G\cdot\hat w$ is not dense ($k$ is activated).
Therefore $\supp v=\Phi_k$ and we get a contradiction to \cref{lem: kstep}.
\end{proof}

\begin{corollary} \label{cor: independence}
Suppose that the procedure succeeds, i.e., $G$ has an open orbit $\orbit$.
Then,
\begin{enumerate}
\item For any $w\in\orbit$, the images of $G_w$ and $S_\Psi$ in $G^{\diag}$ coincide. Thus, $S_\Psi$, and therefore
$\pSat(\Psi)$, do not depend on the choice of flag and eigenbasis.
\item For every $w\in\orbit$,
\[
\#\Phi=\rk G-\rk G_w=N-\dim(\rad{G}\cdot w).
\]
In particular, the size of the active set is independent of all choices.
\item Let $w\in\orbit$ and assume that $w$ is contained in $\oplus_{\chi\in A} U_\chi$ for some subset $A$ of weights of $T$.
Then, $\Psi$ is contained in $\pSat(\langle A\rangle)$. In particular, $\# A\ge\#\Phi$.
\end{enumerate}
\end{corollary}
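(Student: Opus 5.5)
Fix $v$ with $\supp v=\Phi$. By \cref{cor: algo}, $v\in\orbit$ and $G_v=S_\Psi\ltimes\rad{(G_v)}$. Any $w\in\orbit$ has the form $w=gv$, so $G_w=gG_vg^{-1}$. Since $G^{\diag}$ is commutative, the image of $G_w$ in $G^{\diag}$ equals the image of $G_v$, which is the image of $S_\Psi$ because $\rad{(G_v)}$ maps to $1$. The map $T\to G^{\diag}$ is an isomorphism, so $S_\Psi$ is determined by $\orbit$ and hence is independent of the flag and eigenbasis. Under the bijection \eqref{eq: psatrec}, the $\charexp$-saturated lattice of characters trivial on $S_\Psi$ is $\pSat(\langle\Psi\rangle)$: it contains $\Psi$ and is $\charexp$-saturated, and the inverse map recovers $S_\Psi$. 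This proves part (1).

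For part (2), I would compute ranks. By part (1), $\rk G_w=\dim S_\Psi$. The characters in $\Psi$ are $\Z$-independent (no abort), so $\dim S_\Psi=\rk G-\#\Phi$, which gives the first equality. For the second, the orbit is open, so $\dim G-\dim G_w=N$. Also $\dim(\rad{G}\cdot w)=\dim\rad{G}-\dim(\rad{G}\cap G_w)=\dim\rad{G}-\dim\rad{(G_w)}$. Since $\dim G=\rk G+\dim\rad{G}$ and $\dim G_w=\rk G_w+\dim\rad{(G_w)}$, subtracting yields $N-\dim(\rad{G}\cdot w)=\rk G-\rk G_w$. The only care needed is that ranks and dimensions are computed via $H=S\ltimes\rad{H}$ for nonconnected $H$, and dimension is insensitive to this.

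For part (3), let $S_A=\bigcap_{\chi\in A}\ker\chi\subset T$. Then $S_A$ fixes $w$, since $w$ lies in the sum of the weight spaces $U_\chi$, $\chi\in A$. So $S_A\subset G_w\cap T$, and its image in $G^{\diag}$ lies in the image of $G_w$, which by part (1) is the image of $S_\Psi$. Since $T\to G^{\diag}$ is injective, $S_A\subset S_\Psi$. Applying the inclusion-reversing bijection \eqref{eq: psatrec} gives $\pSat(\langle\Psi\rangle)\subset\pSat(\langle A\rangle)$, so $\Psi\subset\pSat(\langle A\rangle)$. Ranks then give $\#A\ge\rk\langle A\rangle\ge\rk\langle\Psi\rangle=\#\Phi$, using the independence of $\Psi$.

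The main point needing care is the identification in part (1) of the image of $G_v$ with $S_\Psi$ when $G_v$ is nonconnected in positive characteristic. This rests on the decomposition $G_v=S_\Psi\ltimes\rad{(G_v)}$ from \cref{lem: kstep} holding at every step. Abort-free success rules out case (4) of \cref{lem: kstep}, since that case always aborts, so case (3) applies at each step and the decomposition is legitimate.
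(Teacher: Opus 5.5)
Your proposal is correct and follows essentially the paper's argument. It uses conjugacy of stabilizers plus commutativity of $G^{\diag}$ for (1), the dimension count based on $G_v=S_\Psi\ltimes\rad{(G_v)}$ for (2), and the inclusion $\bigcap_{\chi\in A}\ker\chi\subset G_w$ together with (1) and the bijection \eqref{eq: psatrec} for (3); the only cosmetic difference is that the paper reduces (2) to $w=v$ via normality of $\rad{G}$, whereas you treat a general $w$ directly through (1).
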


\begin{proof}
The first part holds since the stabilizers of two points of the orbit are conjugate and
inner conjugation acts trivially on $G^{\diag}$ since $G^{\diag}$ is abelian.
We can then recover $\pSat(\Psi)$ from $S_\Psi$ by \cref{eq: psatrec}.

Let $v\in U$ with $\supp v=\Phi$.
It is enough to check the second part in the case $w=v$ (since $\rad{G}$ is normal in $G$).

Let $H=G_v$.
Since $\Psi$ is $\Z$-independent, we have $\#\Phi=\dim T-\dim S_\Psi=\rk G-\rk H$.

For the second formula,
$N=\dim\orbit=\dim G-\dim H$, with $\dim G=\dim T+\dim \rad{G}$ and
$\dim H=\dim S_\Psi+\dim \rad{H}$ while $\dim(\rad{G}\cdot v)=\dim \rad{G}-\dim \rad{H}$.

Finally, if $w\in\oplus_{\chi\in A} U_\chi$ then $\cap_{\chi\in A}\ker\chi\subset G_w$ and hence $\langle\Psi\rangle\subset\pSat(\langle A\rangle)$ by the first part.
Since $\rk\langle\Psi\rangle=\#\Phi$ and $\rk\pSat(\langle A\rangle)=\rk\langle A\rangle\le\# A$ we get $\# A\ge\#\Phi$.
\end{proof}

\begin{remark} \label{rem: dependsonflag}
In general, in case of success the set $\Psi$ of active weights itself may depend on the choice of flag.
(See \cref{exam: flagdep} in \cref{sec: typeA}.)
It will be interesting to analyze further the structure of the possible sets of active weights as we vary the flag.
\end{remark}

\begin{remark} \label{rem: coinvactiv}
In the case $G=T$ is a torus, every step activates and we recover the standard fact that an open orbit
exists if and only if $\chi_1,\dots,\chi_N$ are $\Z$-independent, in which case the open orbit consists
of the vectors with full support.

In contrast, if $G$ is unipotent and $U\ne0$ then the first step aborts since $G$ acts trivially on $U_1$ and $X^*(G)=0$,
in accordance with the fact that there are no open orbits. In fact, as is well known, all orbits are closed in this case \cite{MR130878}*{Theorem 2}.
\end{remark}

More generally, let $U_{\coinv}$ be the space of coinvariants with respect to $\rad{G}$.
Thus $U_{\coinv}=U/U'$ where
\begin{equation} \label{eq: radcoinv} 
U'=\langle u\cdot v-v:u\in \rad{G}, v\in U\rangle.
\end{equation}
Since $\rad{G}$ is normal in $G$, $U'$ is a $G$-submodule, and $G$ acts on $U_{\coinv}$ through the torus quotient $G^{\diag}$.
The images of the $F_k$'s form a filtration of $U_{\coinv}$ whose successive quotients $(F_{k-1}+U')/(F_k+U')$ are of dimension $\le1$.
Since $F_{k-1}=\langle e_k\rangle+F_k$, the $k$-th quotient is nonzero if and only if $e_k\notin U'+F_k$, in which case it is spanned
by the image of $e_k$, on which $T$ acts by $\chi_k$.
Hence, the weights of $T$ on $U_{\coinv}$, with multiplicity, are the $\chi_k$ where $k$ ranges over the indices with $e_k\notin U'+F_k$.

\begin{lemma} \label{lem: coinvactive}
In the case of success, every index $k$ such that $e_k\notin U'+F_k$ is active.
In particular, for any choice of invariant flag and eigenbasis, the weights of $T$ on $U_{\coinv}$ are active weights.
Thus, they are $\Z$-independent, and in particular, distinct.
\end{lemma}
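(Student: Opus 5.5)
We argue by contraposition: we show that if $k$ is not active, then $e_k\in U'+F_k$. Suppose the procedure succeeds and $k\notin\Phi$, so $\Phi_k=\Phi_{k-1}$. Take $v\in U_k$ with $\supp v=\Phi_{k-1}$, put $w=\pi_k(v)$ and $H'=G_w$; note that $v=\hat w$. By \cref{lem: kstep}(1), step $k$ is not activated exactly when $\rad{H'}$ acts transitively on the fiber $\pi_k^{-1}(w)$. By \cref{lem:fiber}\cref{part: action}, this means $\beta_w\rest_{\rad{H'}}$ is surjective. In particular there is $u\in\rad{H'}\subset\rad{G}$ with $\beta_w(u)=1$, so that $u\cdot\hat w=\hat w+e_k$ in $U_k$.

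Next we lift this to $U$. Choose $\tilde v\in U$ with $\supp\tilde v\subseteq\{1,\dots,k-1\}$ whose image in $U_k$ is $\hat w$. Then $u\cdot\tilde v-\tilde v\in U'$. By the previous paragraph, its image in $U_k=U/F_k$ is $e_k$, so $u\cdot\tilde v-\tilde v\in e_k+F_k$. Hence $e_k\in U'+F_k$. This proves the first assertion.

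For the remaining assertions, recall the discussion preceding the lemma: the weights of $T$ on $U_{\coinv}$, counted with multiplicity, are the $\chi_k$ with $e_k\notin U'+F_k$. By the first assertion these indices lie in $\Phi$, so these weights lie in $\Psi$. In the case of success, $\Psi$ is $\Z$-independent as a family indexed by $\Phi$, since the procedure never aborted. A subfamily of a $\Z$-independent family is again $\Z$-independent, and independent characters are in particular distinct. This argument applies for any choice of invariant flag and eigenbasis.

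The only delicate point is the first step: "dense modulo $F_k$ without activation" must produce a unipotent element that translates the fiber. This is exactly the content of \cref{lem: kstep}(1) combined with \cref{lem:fiber}\cref{part: action}, including in positive characteristic, where non-activation forces $\beta_w(\rad{H'})=K$. Once that is in hand, the rest is bookkeeping.
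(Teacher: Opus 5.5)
Your proposal is correct and follows essentially the same route as the paper. Both argue by contraposition: \cref{lem: kstep} gives, at a non-activated step, a unipotent element of $G_w$ that translates $\hat w$ to $\hat w+e_k$, and lifting to $U$ shows $e_k\in U'+F_k$. The independence and distinctness of the coinvariant weights then follow from the $\Z$-independence of the active characters in a successful run.
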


\begin{proof}
We prove the contrapositive: if the step $k$ is not activated, then $e_k\in U'+F_k$.
Let $w\in U_{k-1}$ with $\supp w=\Phi_{k-1}$.
By the first part of \cref{lem: kstep}, $\rad{(G_w)}$ acts transitively on the fiber $\pi_k^{-1}(w)$.
In particular, there exists $u\in\rad{(G_w)}\subset \rad{G}$ such that $u\cdot\hat w=\hat w+e_k$ in $U_k$.
Lifting $\hat w$ to a vector $v\in U$, we obtain $u\cdot v-v\in e_k+F_k$, and therefore $e_k\in U'+F_k$, as claimed.

The last assertion follows since in the case of success the characters $\chi_j$, $j\in\Phi$, are $\Z$-independent.
(Alternatively, the $\Z$-independence of the weights of $U_{\coinv}$ follows from the fact that the image of the
open $G$-orbit is a dense $T$-orbit in $U_{\coinv}$.)
\end{proof}

In view of \cref{rem: dependsonflag}, the converse of \cref{lem: coinvactive} does not hold in general: an active weight need not occur in $U_{\coinv}$.

\begin{definition} \label{def: strongPVS}
We say that $U$ is a strong {\PVS} if it admits an open orbit and $\Psi$ coincides with the set of weights of $T$ on $U_{\coinv}$. 
\end{definition}

\begin{lemma}
$U$ is a strong {\PVS} if and only if the following two conditions are satisfied.
\begin{enumerate}
\item The weights of $T$ on $U_{\coinv}$ are $\Z$-independent (and in particular, distinct).
\item For any (or equivalently, every) $w\in U_{\coinv}$ with full support,
$\rad{G}$ acts transitively on the fiber of $w$ under the projection $p:U\rightarrow U_{\coinv}$.
\end{enumerate}
\end{lemma}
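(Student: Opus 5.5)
Both directions should come out of comparing the projection $p\colon U\to U_{\coinv}$ with the open $G$-orbit, using \cref{lem: coinvactive} and \cref{cor: independence}.

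Throughout, let $W$ be the set of weights of $T$ on $U_{\coinv}$, counted with multiplicity. The map $p$ is $G$-equivariant. Since $\rad{G}$ acts trivially on $U_{\coinv}$, the $G$-orbits there are just $T$-orbits. If condition (1) holds, the $T$-orbit of any full-support $w\in U_{\coinv}$ is the open set of full-support vectors. Conversely, if $G$ has an open orbit $\orbit$ in $U$, then $p(\orbit)$ is a dense $T$-orbit, which forces (1). In that case every $x\in\orbit$ has full-support image, since the full-support locus is open and dense and $p(\orbit)$ is a single $T$-orbit meeting it.

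\emph{Forward direction.} Suppose $U$ is a strong \PVS{}, so (1) holds. Fix $x\in\orbit$ and put $w=p(x)$; then $w$ has full support. I want $p^{-1}(w)\subset\rad{G}\cdot x$. Since $\rad{G}$ acts trivially on $U_{\coinv}$, $\rad{G}\cdot x\subset p^{-1}(w)$. By \cref{cor: independence}(2),
\[
\dim\rad{G}\cdot x=N-\#\Phi=N-\#W=\dim U'=\dim p^{-1}(w).
\]
The fiber $p^{-1}(w)$ is an affine space, hence irreducible, so $\rad{G}\cdot x$ is dense in it. Orbits of a unipotent group on an affine variety are closed \cite{MR130878}*{Theorem 2}, so $\rad{G}\cdot x=p^{-1}(w)$. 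Transitivity for one full-support $w$ gives it for all of them: $T$ acts transitively on full-support vectors, normalizes $\rad{G}$, and carries fibers of $p$ to fibers. This also settles the \emph{any} versus \emph{every} equivalence in (2), given (1).

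\emph{Converse.} Assume (1) and (2). Take $x$ with $p(x)=w$ of full support. Then
\[
G\cdot x\supset T\cdot\bigl(\rad{G}\cdot x\bigr)=T\cdot p^{-1}(w)=p^{-1}(T\cdot w),
\]
and $p^{-1}(T\cdot w)$ is the preimage of an open set, hence open. So $U$ has an open orbit and the procedure succeeds by \cref{cor: algo}.

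Since $\rad{G}\cdot x=p^{-1}(w)$ has dimension $\dim U'$, \cref{cor: independence}(2) gives $\#\Phi=N-\dim U'=\#W$. By \cref{lem: coinvactive}, every index $k$ with $e_k\notin U'+F_k$ is active. There are exactly $\#W$ such indices, so they exhaust $\Phi$. Hence $\Psi$ equals the weight set of $U_{\coinv}$ as multisets, which are sets because of $\Z$-independence. Note this identification holds for the chosen flag, and the argument works for any flag.

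The main subtle point is the closedness of unipotent orbits, needed in the forward direction to upgrade density of $\rad{G}\cdot x$ in the fiber to equality. It is quoted from \cite{MR130878}, already cited in \cref{rem: coinvactiv}. Everything else is dimension counting with \cref{cor: independence} and \cref{lem: coinvactive}.
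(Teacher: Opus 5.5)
Your proof is correct, but it is organized differently from the paper's.

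\textbf{The paper's route.} The paper first combines \cref{lem: coinvactive} with the flag-independence of $\#\Phi$ from \cref{cor: independence}. This reduces strong prehomogeneity to the single equality $\#\Phi=\dim U_{\coinv}$. It then runs the procedure for one special flag, with $F_q=U'$ and $q=\dim U_{\coinv}$. The first $q$ steps all activate, since $\rad{G}$ acts trivially on $U/U'$, and they pass exactly when condition (1) holds. The remaining steps successively refine the affine fiber $p^{-1}(\bar w)$. By \cref{lem: kstep}, none of them activates exactly when $\rad{G}$ acts transitively on that fiber: transitivity on the whole fiber restricts to transitivity on each successive affine line, and conversely stepwise transitivity lifts inductively.

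\textbf{Your route.} You keep an arbitrary flag and replace this step-by-step analysis by a global count. The formula $\dim(\rad{G}\cdot x)=N-\#\Phi$ from \cref{cor: independence}, together with the closedness of unipotent orbits \cite{MR130878}, turns the equality $\#\Phi=\dim U_{\coinv}$ directly into transitivity on the fiber. For the converse you exhibit the open orbit directly as $p^{-1}(T\cdot w)$. You then read off $\#\Phi$, and via \cref{lem: coinvactive} the coincidence of $\Psi$ with the weights of $U_{\coinv}$.

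\textbf{What each buys.} Your argument is shorter and needs no flag through $U'$. It also shows that the open orbit is the full preimage under $p$ of the full-support locus of $U_{\coinv}$. The paper's argument stays entirely inside the framework of \cref{lem: kstep} and needs no external closedness theorem; the inductive lifting along the flag is in effect an elementary proof of the closedness you quote. It also shows that the two conditions of the lemma are exactly the success conditions of the two phases of the procedure for that flag.
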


\begin{proof}
By \cref{lem: coinvactive}, the weights of $T$ on $U_{\coinv}$, counted with multiplicity, occur among the active weights for every invariant flag.
By \cref{cor: independence}, the number of active weights is independent of the flag. Consequently, the condition that the active weights coincide with the coinvariant weights is flag-independent;
equivalently, it is the condition $\#\Phi=\dim U_{\coinv}$.

Let $q=\dim U_{\coinv}$.
Choose a $G$-stable complete flag with $F_q=U'$, where $U'$ is as in \cref{eq: radcoinv}.
The first $q$ flag quotients form a complete flag of $U_{\coinv}$.  Since $\rad{G}$ acts trivially on
$U/U'$, each of these steps activates; the process passes them without aborting exactly
when the corresponding torus weights are $\Z$-independent.  These are precisely
the weights of $T$ on $U_{\coinv}$.

Fix a full-support point $\bar w\in U_{\coinv}$.  For the remaining steps the
algorithm refines the affine fiber $p^{-1}(\bar w)$ by one-dimensional quotients.  If
$\rad{G}$ acts transitively on $p^{-1}(\bar w)$, then at each refinement the
stabilizer of the previously chosen quotient point acts transitively on the next affine
line: two lifts with the same previous quotient are related by an element that necessarily
stabilizes that quotient.  Hence none of the remaining steps activates.  Conversely,
stepwise transitivity along the flag lifts inductively to transitivity on the whole fiber.
Thus the active weights are exactly the coinvariant weights if and only if the two conditions of the lemma hold.
Transitivity for one full-support $\bar w$ is equivalent to transitivity
for every such $\bar w$, since the torus acts transitively on the full-support locus.
\end{proof}

\subsection{Infinitesimal/separable version}

In case $G$ has a dense orbit $\orbit$ in $U$
we may ask whether the orbit map $G\rightarrow U$ is separable, in which case we say that $G$ has an open/dense separable orbit.
Of course, this is automatic if $\chr K=0$.

Equivalently, let $\Lieg$ be the Lie algebra of $G$, acting on $U$ by the differential of the $G$-action.
The question is whether there exists $v\in U$ such that the infinitesimal orbit map
$\Lieg\rightarrow U$, $X\mapsto X\cdot v$ is onto. In this case, the set of $v$'s for which this is satisfied is the open orbit.

We can replace the greedy procedure of \cref{def:greedy} by an infinitesimal version.
(Provisionally, we denote by $\Phi'_k$ the resulting sets.)
For the non-activation, we strengthen the open orbit condition on $v\in U_k$ with $\supp v=\Phi'_{k-1}$
by requiring that $v$ has an open \emph{separable} orbit in $U_k$.
Upon activation, we strengthen the non-abortion condition to require that the images of $\chi_i$, $i\in\Phi'_k$ in $X^*(T)\otimes_{\Z}K$
are linearly independent over $K$, or equivalently, that $d\chi_i$, $i\in\Phi'_k$ are linearly independent in $\Liet^*$.

For $\chr K=0$ these conditions are identical to the original ones and the procedures coincide.
If $\chr K=p>0$ the abortion condition is that the images of $\chi_i$, $i\in\Phi'_k$ in $X^*(T)/pX^*(T)$ are linearly dependent over $\Z/p\Z$.

In both cases the activation condition is independent of the choice of $v$ since it depends only on the $G$-orbit
of $v$ and $T$ acts transitively on the $v$'s in $U_k$ with $\supp v=\Phi'_{k-1}$ since
$\chi_i$, $i\in\Phi'_{k-1}$ are $\Z$-independent in $X^*(T)$ (otherwise the procedure has already aborted).

Write the action of Lie algebra in terms of the basis $e_1,\dots,e_N$, i.e.
\[
X\cdot v=\sum_{i=1}^N\rho_i(v)(X)e_i,\ \ X\in\Lieg, v\in U.
\]
We view the coefficients as $N$ linear maps 
\[
\rho_i:U\rightarrow\Lieg^*,\ \ i=1,\dots,N.
\]

We can now write the infinitesimal version as follows.
\begin{definition}[the greedy procedure -- linear algebra version] \label{def:greedy2}
Define inductively subsets $\Phi'_k\subset\{1,\dots,N\}$, $k=0,\dots,N$ as follows.
Set $\Phi'_0:=\emptyset$. Suppose that $\Phi'_{k-1}$ has been defined. Let $v\in U_k$ have support $\Phi'_{k-1}$.
If
\[
\rho_1(v),\dots,\rho_k(v)\in\Lieg^*
\]
are linearly independent in $\Lieg^*$, set $\Phi'_k=\Phi'_{k-1}$.
Otherwise, set
\[
\Phi'_k=\Phi'_{k-1}\cup\{k\}
\]
In the latter case, abort if the characters $d\chi_i$, $i\in\Phi'_k$ are linearly dependent in $\Liet^*$
(or equivalently, the characters $d\tilde\chi_i$, $i\in\Phi'_k$ are linearly dependent in $\Lieg^*$).
\end{definition}

Note that for each $k$, $\rho_k(v)$ depends only on the image of $v$ in $U_k$, that is
\begin{subequations}
\begin{equation} \label{eq: deprhok}
\rho_j(e_k)=0\text{ for all }j<k.
\end{equation}
Moreover,
\begin{equation} \label{eq: rhok}
\rho_k(e_k)=d\tilde\chi_k.
\end{equation}
Finally, restricting to $\Liet=\Lie T$ for every $i,j=1,\dots,N$
\begin{equation} \label{eq: projrho}
\rho_i(e_j)\rest_{\Liet}=\delta_{i,j}\, d\chi_i.
\end{equation}
\end{subequations}

\begin{proposition} \label{prop: greedy}
The greedy infinitesimal procedure succeeds if and only if there exists an open separable orbit $\orbit$ in $U$.
In this case, the algorithm stepwise coincides with the non-infinitesimal version.
If the greedy infinitesimal procedure aborts at $k$ then there is no open separable orbit in $U_k$.
\end{proposition}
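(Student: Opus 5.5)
We prove an infinitesimal analogue of \cref{lem: kstep} by induction on $k$, replacing stabilizer groups by the kernel $\mathfrak h_v=\{X\in\Lieg: X\cdot v=0\}$ of the infinitesimal orbit map in $U_k$. A vector $v\in U_k$ has an open separable orbit in $U_k$ exactly when $\rho_1(v),\dots,\rho_k(v)$ are linearly independent, i.e.\ when $\dim\mathfrak h_v=\dim\Lieg-k$. This justifies the non-activation test in \cref{def:greedy2}.

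The inductive hypothesis at step $k$ has three parts. First, for $w\in U_{k-1}$ with $\supp w=\Phi'_{k-1}$, the maps $\rho_1(w),\dots,\rho_{k-1}(w)$ are independent. Second, $\mathfrak h_w=\Lie(S_{k-1})\oplus(\mathfrak h_w\cap\uu)$, where $\uu=\Lie\rad{G}$. Third, $\Lie S_{k-1}$ is the joint kernel in $\Liet$ of $d\chi_i$, $i\in\Phi'_{k-1}$, and this kernel has codimension $\#\Phi'_{k-1}$ because the $d\chi_i$ are $K$-independent.

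For the step, take $v=\hat w+se_k\in U_k$. By \cref{eq: deprhok}, \cref{eq: rhok} and \cref{eq: projrho}, $\rho_k(v)=\rho_k(\hat w)+s\,d\tilde\chi_k$, and $\rho_k(\hat w)$ vanishes on $\Liet$. Hence $\mathfrak h_v=\mathfrak h_w\cap\ker\rho_k(v)$, and $v$ has an open separable orbit iff $\rho_k(v)\not\equiv0$ on $\mathfrak h_w$.

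\emph{Case A:} $\rho_k(\hat w)$ is nonzero on $\mathfrak h_w\cap\uu$. Then $s=0$ already works, so step $k$ does not activate. The new stabilizer algebra is $\Lie S_{k-1}\oplus(\mathfrak h_v\cap\uu)$, because $\Lie S_{k-1}$ fixes $\hat w$ infinitesimally by \cref{eq: projrho}.

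\emph{Case B:} $\rho_k(\hat w)$ vanishes on $\mathfrak h_w\cap\uu$. Then $\rho_k(\hat w)$ vanishes on all of $\mathfrak h_w$, so $s=0$ fails and step $k$ activates. For $s\neq0$ the test reduces to whether $d\chi_k$ is nonzero on $\Lie S_{k-1}$. This holds iff $d\chi_k\notin\operatorname{span}_K\{d\chi_i:i\in\Phi'_{k-1}\}$, which is exactly the non-abort condition. The new algebra is $\Lie S_{k-1}\cap\ker d\chi_k\oplus(\mathfrak h_w\cap\uu)$. It remains to identify this with $\Lie S_k$, meaning $S_k$ with its reduced structure. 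Since the $d\chi_i$, $i\in\Phi'_k$, are $K$-independent, the scheme-theoretic intersection of the kernels is smooth, so its Lie algebra is the joint kernel of the $d\chi_i$. This closes the induction.

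It follows that the procedure succeeds iff some (hence every) $v\in U$ with $\supp v=\Phi'_N$ has a surjective infinitesimal orbit map, and an abort at $k$ is exactly the failure of separability in $U_k$. To exclude open separable orbits in $U_k$ elsewhere, argue as in \cref{cor: algo}. A separable open orbit in $U_k$ projects to a separable open orbit in $U_{k-1}$, which is then the orbit of the full-support-$\Phi'_{k-1}$ vectors. Translating by $G$ reduces to the fiber over $w$, and there both $\hat w$ and the vectors with $s\neq0$ fail.

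For stepwise coincidence with \cref{def:greedy}: as long as no abort occurs, the $d\chi_i$ are $K$-independent, hence the $\chi_i$ are $\Z$-independent and even $\charexp$-saturated independent, so the abort conditions agree. For the activation conditions, separable-open implies open. Conversely, if the infinitesimal test fails at a non-aborting step, then both procedures must activate. Assuming the run succeeds overall, the orbits remain separable, so non-separability means $\dim\mathfrak h_v>\dim\Lieg-k$. Combined with the separability of the stabilizer inherited by induction, this forces $\dim G_v>\dim G-k$, so the orbit is not dense either.

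The main obstacle is this last comparison: justifying that at each step the reduced stabilizer $G_v$ is smooth with $\Lie G_v=\mathfrak h_v$. My plan is to carry the equality $\Lie G_v=\mathfrak h_v$ through the induction using the explicit decomposition $H=S_k\ltimes\rad H$ from \cref{lem: kstep} together with the matching Lie algebra decomposition above. Only the unipotent parts need comparing, and there the fiber translations $\beta_w$ and their differentials $\rho_k(\hat w)\rest_{\uu}$ correspond.
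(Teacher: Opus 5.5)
Your argument for the first and third assertions is correct, and it is the paper's induction in dual form. The paper follows the span $L_k(v)$ of $\rho_1(v),\dots,\rho_k(v)$, uses that $d\tilde\chi_i\in L_k(v)$ for active $i$, and detects abortion by restricting to $\Liet^*$. You follow the annihilator $\mathfrak h_v$ and its splitting $\Lie S_{k-1}\oplus(\mathfrak h_w\cap\uu)$. Your Case A/B analysis, which covers the whole fiber over $w$, is fine, as is the reduction to that fiber via the open orbit of $U_{k-1}$.

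The gap is in the stepwise coincidence, in exactly the direction you flag. In a successful run, if the infinitesimal test fails at $\hat w$, why is $G\cdot\hat w$ not dense in $U_k$? From $\Lie G_w=\mathfrak h_w$ and $\mathfrak h_{\hat w}=\mathfrak h_w$ you only learn that $\Lie G_w$ kills $\hat w$. Concluding $\dim G_{\hat w}=\dim G_w$ requires smoothness of the stabilizer at $\hat w$ itself, which is the very point at issue.

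The mechanism you suggest, that the translations $\beta_w$ of $\rad{H'}$ on the fiber and their differentials ``correspond'', is false in characteristic $p$. Take $G=\Multgrp\times\Addgrp$ acting on $K^2$ by $(t,u)\cdot(x,y)=(tx,\,t(y+u^px))$ with $F_1=Ke_2$. The group $\rad{H'}=\Addgrp$ translates the fiber over $w=e_1$ transitively via $s\mapsto s+u^p$, while $\rho_2(\hat w)$ vanishes on $\mathfrak h_w$. Here the infinitesimal run aborts, since $d\chi_1=d\chi_2$, so the proposition is not contradicted. It is precisely the non-abort condition that excludes this behaviour.

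A local repair would therefore have to bring in $d\chi_k\rest_{\Lie S_{k-1}}\neq0$ together with the equivariance $\beta_w(tut^{-1})=\chi_k(t)\beta_w(u)$. The point would be that a nonzero homomorphism $(\rad{H'})^\circ\to\Addgrp$ with zero differential is the $p$-th power of a regular function. That function is a weight vector for a character $\psi$ with $p\psi=\chi_k$ on $S_{k-1}^\circ$, which forces $d\chi_k=0$ on $\Lie S_{k-1}$. Nothing of this kind is in your sketch.

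None of it is needed. The missing idea, which is what the paper uses, is uniqueness of the dense orbit:
\begin{itemize}
\item Your first part already shows that in a successful run the vectors of $U_k$ with support $\Phi'_k$ lie in an open separable orbit $\orbit_k$ of $U_k$.
\item If $G\cdot\hat w$ were dense in $U_k$, it would be open, hence would meet and therefore equal $\orbit_k$.
\item Surjectivity of $X\mapsto X\cdot v$ is constant along $G$-orbits, so $\hat w$ would then pass the infinitesimal test, a contradiction.
\end{itemize}
With $\Phi_k=\Phi'_k$ obtained inductively this way, your remark that $K$-independence of the $d\chi_i$ implies $\Z$-independence settles the abort conditions.
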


\begin{proof}
For any $v\in U$ let $L_k(v)$ be the span of $\rho_i(v)$, $i=1,\dots,k$ in $\Lieg^*$.
Clearly, $L_k(v)$ depends only on the image of $v$ in $U_k$.
Suppose that step $k$ is reached.
We claim that for all $j<k$ and $v\in U_j$ with $\supp v=\Phi'_j$ we have $\dim L_j(v)=j$ and
$d\tilde\chi_i\in L_j(v)$ for all $i\in\Phi'_j$. Moreover, if $k$ is activated, the abort condition is precisely that
\begin{equation} \label{eq: abortcond}
L_k(v)=L_{k-1}(v)\text{ for any $v$ with }\supp v=\Phi'_k.
\end{equation}

We prove this by induction on $k$. The case $k=0$ is empty. Assume the statement holds for $k-1$. If $k$ is not activated, the statement holds for $k$
since $L_k(v)$ properly contains $L_{k-1}(v)$ for $v\in U_k$ with $\supp v=\Phi'_k=\Phi'_{k-1}$.
Suppose that $k$ activates. Then $L_k(v)=L_{k-1}(v)$ for any $v$ with $\supp v=\Phi'_{k-1}$.
Therefore by \cref{eq: deprhok} and \cref{eq: rhok}, $L_k(v)=L_{k-1}(v)+K d\tilde\chi_k$ for any $v$ with $\supp v=\Phi'_k$.
Since $d\tilde\chi_i\in L_{k-1}(v)$ for all $i\in\Phi'_{k-1}$ by induction hypothesis, we infer (again using \cref{eq: deprhok})
that if the abort condition is satisfied then \cref{eq: abortcond} holds.
Conversely, suppose that $L_k(v)=L_{k-1}(v)$, i.e. $\rho_k(v)\in L_{k-1}(v)$, for any $v$ with $\supp v=\Phi'_k$.
The support condition on $v$ together with \cref{eq: projrho} means that the image of $L_{k-1}(v)$ under the restriction map $\pr:\Lieg^*\rightarrow\Liet^*$
is contained in the span of $d\chi_i$, $i\in\Phi'_{k-1}$.
On the other hand,
\[
\rho_k(v)\rest_{\Liet}=\lambda_k\,d\chi_k
\]
where $\lambda_k\ne0$ is the $k$-th coordinate of $v$. Therefore, the abort condition is satisfied.
Our claim follows.

We infer that if the infinitesimal procedure succeeds, then $\rho_1(v),\dots,\rho_N(v)$ are linearly independent in $\Lieg^*$
for any $v\in U$ with $\supp v=\Phi'_N$. This means that the orbit of $v$ is separably open.
On the other hand, if the infinitesimal procedure aborts at $k$, then $\dim L_k(v)=\dim L_{k-1}(v)=k-1$ for any $v\in U_k$
with $\supp v=\Phi'_k$. Suppose on the contrary that $v\in U_k$ has an open separable orbit.
Then $\pi_k(v)\in U_{k-1}$ is in the open (separable) orbit of $U_{k-1}$ and by using the $G$-action we may assume that $\supp\pi_k(v)=\Phi'_{k-1}$
so that $\Phi'_{k-1}\subset\supp v\subset\Phi'_k$. Since $k$ activates, $\supp v=\Phi'_{k-1}$ is not possible. Therefore $\supp v=\Phi'_k$.
However, this contradicts that $\dim L_k(v)<k$.

Finally, suppose that the infinitesimal procedure succeeds, i.e. an open separable orbit exists.
Then an open separable orbit exists in $U_k$ for all $k$.
We prove by induction on $k$ that $\Phi_k=\Phi'_k$.
For the induction step, one notes that a non-activation step in the original procedure is also a non-activation step
in the infinitesimal version, otherwise the resulting open orbit in $U_k$ would not be separable.
Conversely, a non-activation step in the infinitesimal procedure is also a non-activation step
in the group-level version simply because separable open implies open.

The proposition follows.
\end{proof}

\section{Application: rigid components of Lusztig's nilpotent varieties of Dynkin type}\label{sec:quiver}

As before, $K$ is an algebraically closed field.
The following standard result follows from Krull--Schmidt, Fitting's lemma and the Wedderburn--Artin theorem.

\begin{lemma} \label{lem: indecomprank}
Let $M$ be a finite-dimensional module over a $K$-algebra $A$.
Let $m_1,\dots,m_k$ be the multiplicities in the indecomposable decomposition of $M$.
Then,
\begin{enumerate}
\item The unipotent radical of $\Aut_A(M)$ is $1+\mathfrak{r}$ where $\mathfrak{r}$ is the radical of $\End_A(M)$.
\item The semisimple quotient of $\End_A(M)$ is isomorphic to $\oplus_{i=1}^k\Mat_{m_i}$.
\item The reductive quotient of $\Aut_A(M)$ is isomorphic to $\prod_{i=1}^k\GL_{m_i}$.
\end{enumerate}
In particular, the rank of $\Aut_A(M)$ is $m_1+\dots+m_k$ and $\Aut_A(M)$ is solvable if and only if $M$ is multiplicity free
i.e., $m_1=\dots=m_k=1$. In this case, writing $M=M_1\oplus\dots\oplus M_k$, the group of automorphisms that act as nonzero scalars on each $M_i$ is a maximal torus.
If moreover every $M_i$ is a brick (i.e., $\End_A(M_i)=K$) then $\mathfrak{r}=\oplus_{i\ne j}\Hom_A(M_i,M_j)$.
\end{lemma}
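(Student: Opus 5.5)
We follow the route the statement itself names: Krull--Schmidt, Fitting's lemma and Wedderburn--Artin. Write $M=\bigoplus_{i=1}^k M_i^{m_i}$ with the $M_i$ pairwise non-isomorphic indecomposables, and set $E=\End_A(M)$, a finite-dimensional $K$-algebra.

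First we identify $E/\mathfrak r$. By Fitting's lemma each $\End_A(M_i)$ is local, and since $K$ is algebraically closed and the algebra is finite-dimensional, its residue field is $K$. Consider the map sending an endomorphism, viewed as a block matrix $(f_{ij})$ with $f_{ij}\in\Hom_A(M_j^{m_j},M_i^{m_i})$, to the tuple of matrices of residues of the diagonal blocks in $\Mat_{m_i}(K)$. This is an algebra homomorphism $E\to\prod_i\Mat_{m_i}(K)$. It is multiplicative because, for $i\ne j$, a composite $M_i\to M_j\to M_i$ is a non-isomorphism of the local ring $\End(M_i)$, hence lies in its maximal ideal. It is surjective, being split by scalar matrices. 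Its kernel is a two-sided ideal that we claim is nilpotent. Modulo the kernel the target is semisimple, so the kernel contains $\mathfrak r$, and a nilpotent ideal lies in $\mathfrak r$; hence the kernel equals $\mathfrak r$. This gives (2).

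The nilpotence of the kernel is the one step that needs care. We would argue with the Harada--Sai type observation that a long composite of non-isomorphisms between indecomposables of bounded length vanishes. Alternatively, one can show the kernel consists of elements $x$ with $1+yx$ invertible for all $y$: reducing modulo the kernel makes $1+yx$ invertible in each local block, and Krull--Schmidt cancellation lifts invertibility. Elements with this property lie in the Jacobson radical, which in an Artinian ring is nilpotent.

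Next, $\Aut_A(M)=E^\times$ is the group of units of $E$, an open subgroup of affine space, so it is a connected algebraic group. The set $1+\mathfrak r$ is a closed normal subgroup: its inverses are given by the finite geometric series, and it is normal because $\mathfrak r$ is a two-sided ideal. It is unipotent because $\mathfrak r$ is nilpotent. The quotient $E^\times/(1+\mathfrak r)=(E/\mathfrak r)^\times\cong\prod\GL_{m_i}$ is reductive, so $1+\mathfrak r$ is the unipotent radical. This gives (1) and (3).

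The rank statement follows from (3). The group $\Aut_A(M)$ is solvable iff its reductive quotient is solvable, that is, iff all $m_i=1$. In that case the image of the subgroup of scalars on each $M_i$ is all of $(K^\times)^k$, so this subgroup is a torus of dimension equal to the rank, hence maximal. Finally, if every $M_i$ is a brick, then $\End(M_i)=K$ has zero maximal ideal. The kernel described above is therefore exactly the set of off-diagonal blocks, so $\mathfrak r=\bigoplus_{i\ne j}\Hom_A(M_i,M_j)$.
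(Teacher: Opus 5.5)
Your proof is correct. The paper gives no argument beyond naming Krull--Schmidt, Fitting's lemma and Wedderburn--Artin, and you follow that route except at the one substantive step.

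\textbf{Where you differ.} The standard route applies Wedderburn--Artin to $E/\mathfrak r$. It then identifies the simple factors through the primitive idempotents $e_p$ projecting onto the indecomposable summands $N_p$: here $e_pEe_p=\End_A(N_p)$ is local with residue field $K$, and $Ee_p\cong Ee_q$ iff $N_p\cong N_q$. You instead write down the surjection $\phi\colon E\to\prod_i\Mat_{m_i}(K)$ explicitly. You show $\ker\phi$ is nilpotent via Harada--Sai, which applies because every component of a kernel element is a non-isomorphism between indecomposables.

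\textbf{What each buys.} Your route costs an extra lemma. In return it gives an explicit description of $\mathfrak r$: diagonal blocks with entries in the maximal ideals $\mathfrak m_i$, off-diagonal blocks arbitrary. The brick clause then follows immediately.

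\textbf{Your second alternative.} As phrased it is only a pointer. Lifting invertibility modulo $\ker\phi$ is equivalent to $\ker\phi\subseteq J(E)$, so it does not follow from ``cancellation.'' It needs the Gaussian-elimination lemma: an endomorphism of $\bigoplus_p N_p$ whose residue matrix is invertible is itself invertible. That lemma uses the fact that non-isomorphisms between indecomposables form an ideal. Since the Harada--Sai argument already suffices, this is not a gap in your proof.
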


We will be particularly interested in the case where $A$ is the path algebra of a Dynkin quiver $Q$.
In this case the isomorphism classes of finite-dimensional $A$-modules (i.e. representations of $Q$) are parameterized by multisets of positive roots of the corresponding root system.
For any such $M$ there is a particularly important representation of $\Aut_Q(M)$. Existence of an open orbit for it is equivalent to rigidity of the irreducible component
of Lusztig's nilpotent variety pertaining to $M$.
In the case where $M$ is multiplicity free (corresponding to sets of positive roots, rather than multisets),
we can apply the discussion of \cref{sec:setup}.

\subsection{The nilpotent variety and its components}

We recall standard facts about Dynkin quivers and their representations. 
For a recent account with closely related notation, see \cite{MR4847234} and the references therein.

Let $Q=(I,\Omega)$ be a finite quiver with vertex set $I$ and arrow set
$\Omega$; an arrow $h$ runs from $s(h)$ to $t(h)$.  We assume
throughout that the underlying graph of $Q$ is a connected, simply laced
Dynkin diagram (type $A$, $D$ or $E$).  Representations of $Q$ on
an $I$-graded vector space $V=\bigoplus_{i\in I}V_i$ are identified
with modules over the path algebra $KQ$; we write $\Hom_Q$,
$\End_Q$, $\Aut_Q$, $\Ext^1_Q$ for the module notions (higher
$\Ext$'s vanish, the path algebra being hereditary) and $\Lin$ for
spaces of plain $K$-linear maps.  Let
\[
R(V):=\bigoplus_{h\in\Omega}\Lin(V_{s(h)},V_{t(h)}),\qquad
\bar R(V):=\bigoplus_{h\in\Omega}\Lin(V_{t(h)},V_{s(h)}),
\]
on which $G_V:=\prod_{i\in I}\mathrm{GL}(V_i)$ acts by conjugation, and define the moment map
\[
\mu\colon R(V)\times\bar R(V)\to\bigoplus_{i\in I}\Lin(V_i,V_i),
\qquad
\mu(x,y)_i:=\sum_{h:\,t(h)=i}x_hy_h-\sum_{h:\,s(h)=i}y_hx_h .
\]
\emph{Lusztig's nilpotent variety} in the case at hand is $\Lambda(V):=\mu^{-1}(0)\subseteq R(V)\times\bar R(V)$ \cite{MR1088333}.
Its points are the modules over the preprojective algebra $\Pi=\Pi(Q)$ with underlying graded space $V$. 
(Note that since $Q$ is Dynkin, $\Pi$ is finite-dimensional; equivalently, its arrow ideal is nilpotent.
Hence every finite-dimensional $\Pi$-module automatically satisfies Lusztig's nilpotency condition.)
Let $\pi\colon\Lambda(V)\to R(V)$ be the first projection.

By Gabriel's theorem \cite{MR0332887}, the assignment $M\mapsto\dimv M$ is a
bijection between the isomorphism classes of indecomposable
$KQ$-modules and the positive roots of the root system of the diagram.
In particular there are finitely many indecomposables and,
therefore finitely many isomorphism classes of modules of each graded dimension, so $R(V)$ is a finite union of $G_V$-orbits $O_M$.

Moreover, the indecomposable modules are bricks and rigid (i.e., $\Ext^1_Q(X,X)=0$).

For indecomposable $Q$-modules $M_1$ and $M_2$ we write $M_1\homr M_2$ if $\Hom_Q(M_2,M_1)\ne0$ and $M_1\extr M_2$ if $\Ext^1(M_1,M_2)\ne0$.
We denote the transitive closure of $\homr$ by $\homrt$. This is a partial order on the set of isomorphism classes of indecomposable $Q$-modules (i.e., positive roots).
We have
\begin{equation} \label{eq: extimplieshom}
M_1\extr M_2\implies M_1\homrt M_2.
\end{equation}
Indeed, suppose that $M_1\extr M_2$ and choose a non-split extension
\[
0\rightarrow M_2\xrightarrow{\ \iota\ }E\xrightarrow{\ \pi\ }M_1\rightarrow0.
\]
Write $E=\oplus_tE_t$ with $E_t$ indecomposable.
We claim that $\Hom_Q(M_2,E_t)\ne0$ and $\Hom_Q(E_t,M_1)\ne0$ for at least one $t$.
Otherwise, let $E'$ be the sum of those $E_t$ for which $\pi\rest_{E_t}\ne0$, and let $E''$ be the sum of the remaining ones,
so that $E=E'\oplus E''$.
By assumption, $\iota(M_2)$ has zero component in each summand of $E'$, whence $\iota(M_2)\subseteq E''$;
on the other hand $\pi(E'')=0$ and therefore $E''\subseteq\ker\pi=\iota(M_2)$.
Thus $E''=\iota(M_2)$ and $\pi$ restricts to an isomorphism $E'\rightarrow M_1$, contradicting the non-splitness of the extension.
Fixing such a $t$ we obtain $M_1\homr E_t\homr M_2$, and hence $M_1\homrt M_2$.

In particular, the relation $\extr$ is acyclic.

Let $x\in R(V)$ and let $M$ be the corresponding module.
The stabilizer of $x$ in $G_V$ is $\Aut_Q(M)$.
We identify the fiber $E:=\pi^{-1}(x)$ with a linear subspace of $\bar R(V)$, on which $\Aut_Q(M)$ acts linearly.

Recall the following standard fact.

\begin{proposition}
\begin{enumerate}
\item The elements of $E$ correspond to the $\Pi$-module structures on $M$ that are compatible as a $Q$-module.
\item If $y\in E$ corresponds to $\tilde M$, then the stabilizer in $\Aut_Q(M)$ of $y$ is $\Aut_\Pi(\tilde M)$.
\item As an $\Aut_Q(M)$-module, $E$ is isomorphic to $\Ext^1_Q(M,M)^*$.
\item Suppose that $M=\bigoplus_iM_i$ with $x$ block-diagonal. Then $E$ is
the direct sum of its blocks $E_{i,j}$ consisting of the
elements of $E$ that map $M_j$ to $M_i$ and vanish on the others.
Under the isomorphism $E\simeq\Ext^1_Q(M,M)^*$ we have $E_{i,j}\cong\Ext^1_Q(M_i,M_j)^*$.
\end{enumerate}
\end{proposition}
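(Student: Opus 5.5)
The plan is to identify $E$ through the moment map equation and then translate via the standard description of $\Ext^1_Q$ for a hereditary path algebra. For the latter I would use the standard exact sequence
\[
0\to\Hom_Q(M,N)\to\bigoplus_{i\in I}\Lin(M_i,N_i)\xrightarrow{\ \delta\ }\bigoplus_{h\in\Omega}\Lin(M_{s(h)},N_{t(h)})\to\Ext^1_Q(M,N)\to0,
\]
with $\delta(\phi)_h=\phi_{t(h)}x_h-x'_h\phi_{s(h)}$. This sequence comes from the standard projective resolution of $KQ$-modules, and it is exact on the right because $KQ$ is hereditary.

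\emph{Part (1).} A $\Pi$-module structure extending $x$ is precisely a choice of the reverse arrows $y\in\bar R(V)$ satisfying the preprojective relations. These relations are exactly $\mu(x,y)=0$, so the set of such structures is the fiber $E=\pi^{-1}(x)$. Since $Q$ is Dynkin, nilpotency is automatic, so every such $y$ gives a $\Pi$-module.

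\emph{Part (2).} Let $g\in\Aut_Q(M)\subset G_V$. Then $g$ fixes $x$, and $g\cdot y=y$ means that $g$ also commutes with the reverse arrows. This is exactly the condition for $g$ to be a $\Pi$-automorphism of $\tilde M$. Conversely, any $\Pi$-automorphism restricts to a $Q$-automorphism fixing $y$.

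\emph{Part (3).} Take $N=M$ in the sequence above. The trace pairing $\langle y,z\rangle=\sum_h\tr(y_hz_h)$ identifies $\bar R(V)$ with the dual of $\bigoplus_h\Lin(V_{s(h)},V_{t(h)})$, and this identification is $G_V$-equivariant, hence in particular $\Aut_Q(M)$-equivariant. A direct computation of $\delta^*$ with respect to the trace pairings gives $\delta^*(y)=\mu(x,y)$ up to sign; this is cyclicity of the trace, $\tr(y_h(\phi_{t}x_h-x_h\phi_s))=\tr((x_hy_h)\phi_t)-\tr((y_hx_h)\phi_s)$, summed over arrows. Dualizing the exact sequence then gives
\[
\Ext^1_Q(M,M)^*=\ker\delta^*=\{y:\mu(x,y)=0\}=E.
\]
All maps in the sequence commute with the conjugation action of $\Aut_Q(M)$, so the isomorphism is one of $\Aut_Q(M)$-modules.

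\emph{Part (4).} The same computation works block by block. The operator $\delta$ for $\Hom(M,M)$ decomposes into the $\delta$'s for the pairs $(M_j,M_i)$, because $x$ is block-diagonal. The trace pairing pairs the $(i,j)$ block of $\bar R$, i.e.\ the maps $M_j\to M_i$, with the $(j,i)$ block of $R$. Consequently $E_{i,j}$ is dual to $\operatorname{coker}\delta_{M_i\to M_j}=\Ext^1_Q(M_i,M_j)$, matching the stated convention.

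The only real obstacle is bookkeeping. One has to check the sign in $\delta^*=\pm\mu$ and the index convention in (4), namely which of $\Ext^1(M_i,M_j)$ and $\Ext^1(M_j,M_i)$ pairs with the maps $M_j\to M_i$. I would verify both on the $A_2$ quiver with $M_1,M_2$ simple.
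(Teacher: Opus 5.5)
Your proposal is correct and follows essentially the same route as the paper. Both arguments use the hereditary exact sequence $0\to\End_Q(M)\to\Lieg_V\xrightarrow{d_x}R(V)\to\Ext^1_Q(M,M)\to0$ and the $G_V$-invariant trace pairing, use cyclicity of the trace to get $\langle d_xa,y\rangle=\sum_i\tr(a_i\mu(x,y)_i)$ (so that $E=(\operatorname{im}d_x)^\perp\simeq\Ext^1_Q(M,M)^*$), and observe that the pairing matches the block of maps $M_j\to M_i$ with $\Ext^1_Q(M_i,M_j)$. Incidentally, your own trace computation already gives $\delta^*(y)=\mu(x,y)$ exactly, with no sign, and your index bookkeeping in part (4) already agrees with the stated convention, so the $A_2$ check is unnecessary.
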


\begin{proof}
The first two assertions follow directly from the definition of the
preprojective algebra and of the stabilizer.  For the third, let
$\Lieg_V=\bigoplus_{i\in I}\End(V_i)$.  Since $KQ$ is hereditary, the standard
deformation complex gives an exact sequence
\[
 0\longrightarrow\End_Q(M)\longrightarrow\Lieg_V
 \xrightarrow{d_x}R(V)\longrightarrow\Ext^1_Q(M,M)\longrightarrow0,
\]
where $(d_xa)_h=a_{t(h)}x_h-x_ha_{s(h)}$.  Pair $R(V)$ with $\bar R(V)$ by
$\langle z,y\rangle=\sum_{h\in\Omega}\tr(z_hy_h)$.  A direct trace
calculation gives
\[
 \langle d_xa,y\rangle=\sum_{i\in I}\tr(a_i\mu(x,y)_i).
\]
Hence $\mu(x,y)=0$ if and only if $y$ annihilates $\operatorname{im}d_x$.  Therefore
\[
 E=\pi^{-1}(x)=(\operatorname{im}d_x)^\perp
   \simeq\operatorname{coker}(d_x)^*
   \simeq\Ext^1_Q(M,M)^*.
\]
All maps are $\Aut_Q(M)$-equivariant.  If $M=\bigoplus_iM_i$, the trace pairing pairs
opposite matrix blocks, and the block of $E$ mapping $M_j$ to $M_i$ is dual to
$\Ext^1_Q(M_i,M_j)$, proving the final assertion.
\end{proof}

For any $M$, the preimage $\pi^{-1}(O_M)$ of its orbit is the conormal bundle to $O_M\subset R(V)$, which is of dimension $\dim R(V)$, and its closure $Z_M$
is an irreducible component of $\Lambda(V)$. The map
\[
M\mapsto Z_M
\]
induces a bijection between isomorphism classes of modules of graded dimension $\dimv V$
(i.e., $G_V$-orbits in $R(V)$) and the irreducible components of $\Lambda(V)$ (\cite{MR0390138}, \cite{MR1088333}*{\S14}).
(For a general symmetric Kac--Moody datum the set of irreducible components is no longer described by
orbits, and is identified with the crystal $B(\infty)$ in \cite{MR1458969}.)
The components are thus parameterized by the multisets of positive roots summing to $\dimv V$, through
$M=\bigoplus_{\beta}M_\beta^{\oplus m_\beta}$.

If $\tilde M$ is a $\Pi$-module, identified with a point in $\Lambda(V)$, then by \cite{MR1834739}
\[
\operatorname{codim}_{Z_M}(G_V\cdot\tilde M)=\frac12\dim\Ext_\Pi^1(\tilde M,\tilde M).
\]

By definition, an irreducible component is \emph{rigid} if it contains a rigid $\Pi$-module, or equivalently,
an open $G_V$-orbit. Since
\[
\pi^{-1}(O_M)=G_V\times^{\Aut_Q(M)}E,
\]
$Z_M$ is rigid if and only if $\Aut_Q(M)$ has a dense orbit on $E=\pi^{-1}(x)$ for $x\in O_M$.

The condition is a genuine restriction: by \cite{MR1944812} a direct sum $Z_{M_1}\oplus\dots\oplus Z_{M_t}$
of components is again a component precisely when the generic $\Ext^1_\Pi$ between the summands vanishes
in both directions, and this canonical decomposition is what makes the multiplicativity statements for the
dual semicanonical basis in \cite{MR2144987} work; \cite{MR2144987}*{Lemma 7.2} is the statement that a
component with a dense orbit has its dual semicanonical vector equal to the delta function of that orbit.
\Cref{part: quiverdecomp} of \cref{thm:quiver} below computes the canonical decomposition of a rigid
$Z_M$ explicitly, from the connected components of the active forest.

Note that in the case at hand,
\begin{equation} \label{eq: autsep}
\text{the orbit map is automatically separable.}
\end{equation}
Indeed, the stabilizer group scheme is the unit group
of the finite-dimensional algebra $\End_\Pi(\tilde M)$, hence an open smooth subscheme of its
underlying affine space.

\subsection{Multiplicity-free modules and an adapted stable flag} \label{sec: convij}

From now on suppose that $M$ is a multiplicity-free $Q$-representation with decomposition $M=\oplus_{i=1}^n M_i$ into (non-isomorphic) indecomposables.
For simplicity we write $i\homr j$ to mean $M_i\homr M_j$; similarly for $i\homrt j$ and $i\extr j$.

Recall that $\Aut_Q(M)$ is solvable and a maximal torus is given by
the multiplicative group of $\oplus_{i=1}^n\End_Q(M_i)\simeq K^n$.
We denote by $\varepsilon_i$ the character of $T$ given by the $i$-th coordinate $i=1,\dots,n$.

Write
\[
E=\Ext_Q^1(M,M)^*=\oplus_{i\neq j}E_{i,j}
\]
with $E_{i,j}\cong\Ext^1_Q(M_i,M_j)^*$ which is nonzero precisely when $i\extr j$.
Recall that the diagonal blocks vanish since $M_i$ is $Q$-rigid
and that $E_{i,j}$ and $E_{j,i}$ cannot be both nonzero.
Note that $T$ acts on $E_{i,j}$ by the character $\varepsilon_i-\varepsilon_j$.
Let $d_{i,j}=\dim E_{i,j}$.

Denote the action of $\Aut_Q(M)$ on $E$ by $g\cdot y$.

Define the \emph{containment order} on the set of pairs $(i,j)$ with $i\extr j$
by: $(i,j)\sqsubseteq(k,l)$ iff $k\homrt i$ and $j\homrt l$.

Let $y\in E_{i,j}$ and $g=(g_{a,b})\in\Aut_Q(M)$.  The $(k,l)$ block of
$g\cdot y=g y g^{-1}$ is
\[
 (g\cdot y)_{k,l}=g_{k,i}\,y_{i,j}\,(g^{-1})_{j,l}.
\]
If this block is nonzero, then $\Hom_Q(M_i,M_k)\ne0$ and
$\Hom_Q(M_l,M_j)\ne0$.  Thus $k\homrt i$ and $j\homrt l$, i.e.
$(i,j)\sqsubseteq(k,l)$.  Consequently
\[
 g\cdot E_{i,j}\subseteq \bigoplus_{(k,l)\sqsupseteq(i,j)}E_{k,l}.
\]
It follows immediately that any ordering of a block basis extending $\sqsubseteq$ defines
an $\Aut_Q(M)$-stable complete flag.

Choose a basis of $E$ consisting of vectors lying in
single blocks, enumerated $w_1,\dots,w_N$ ($N:=\dim E$) so that the
blocks appear in an order extending $\sqsubseteq$ (the vectors of
each block consecutively, in any order); call such enumerations
\emph{adapted}.

Modulo the sum of the strictly larger blocks, $g$ acts on $E_{i,j}$ by the scalar $g_{i,i}g_{j,j}^{-1}$; hence any complete flag within a block is stable.

Then the subspaces
$F_k:=\langle w_{k+1},\dots,w_N\rangle$ form a complete
$\Aut_Q(M)$-stable flag of $E$, and $w_1,\dots,w_N$ is a compatible
eigenbasis of $T$, the character of the $k$-th flag line being
$\varepsilon_{i_k}-\varepsilon_{j_k}$, where $(i_k,j_k)$ is the
block of $w_k$.

The dependence of characters can be read off combinatorially from the following standard fact.

\begin{lemma}\label{lem:cycle}
Let $\grph$ be an undirected graph with vertices $\{1,\dots,n\}$ and edges $\{a_i,b_i\}$, $i=1,\dots,m$.
Let $T$ be the $n$-dimensional standard torus $T=\Multgrp^n$.
Then $\grph$ is acyclic if and only if 
the characters $\varepsilon_{a_i}-\varepsilon_{b_i}$ in $X^*(T)\otimes K$,  are linearly independent over $K$.
In particular, this condition is independent of the characteristic of $K$.

In general, the characters $\varepsilon_{a_i}-\varepsilon_{b_i}$ span a $\Z$-saturated lattice, and their joint kernel
is the subtorus consisting of the elements whose coordinates are constant on each connected component.
\end{lemma}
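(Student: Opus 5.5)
We prove \cref{lem:cycle} by passing to the incidence matrix of $\grph$ and reducing everything to spanning forests. Write $D\colon\Z^m\to\Z^n=X^*(T)$ for the map $f_i\mapsto\varepsilon_{a_i}-\varepsilon_{b_i}$; after choosing an orientation of each edge this is the signed incidence matrix. The image of $D$ is the lattice spanned by the characters in question.

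For the acyclicity criterion we argue both directions directly.

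\emph{A cycle gives dependence.} Suppose $\grph$ has a cycle $v_0,v_1,\dots,v_r=v_0$ using distinct edges. Then the signed sum $\sum_t(\varepsilon_{v_{t-1}}-\varepsilon_{v_t})=0$, where each term is $\pm$ the corresponding edge character. This is a relation with coefficients $\pm1$, hence nonzero in any characteristic. Loops and repeated edges are degenerate cycles: a loop gives the zero character, and two parallel edges give equal characters.

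\emph{A forest gives independence.} Suppose $\grph$ is a forest. We induct on $m$. A forest with at least one edge has a leaf $v$ incident to exactly one edge $e$. In a putative relation over $K$, the coefficient of $\varepsilon_v$ is $\pm c_e$, so $c_e=0$. Removing $e$ leaves a forest with $m-1$ edges, and induction finishes. This argument works over $K$, and therefore over $\Z$ as well, so the condition does not depend on the characteristic.

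For the second part we show that $\operatorname{im}D$ equals
\[
A=\{\sum_v c_v\varepsilon_v:\ \textstyle\sum_{v\in C}c_v=0\text{ for every connected component }C\}.
\]
Since $A$ is defined by vanishing of integer linear forms, it is clearly saturated. The inclusion $\operatorname{im}D\subseteq A$ is immediate, since each edge character has coordinate sum zero on every component.

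For the reverse inclusion, fix a root $r_C$ in each component $C$. For each vertex $v\in C$, telescoping along a path from $r_C$ to $v$ shows that $\varepsilon_v-\varepsilon_{r_C}\in\operatorname{im}D$. An element of $A$ can be written as $\sum_v c_v(\varepsilon_v-\varepsilon_{r_C})$, because the coefficients on each component sum to zero. Hence $A\subseteq\operatorname{im}D$, and the lattice is saturated.

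The joint kernel is then $\{t:t_{a_i}=t_{b_i}\ \forall i\}$. This is exactly the set of $t$ that are constant along edges, hence constant on components, and it is a subtorus isomorphic to $\Multgrp^{\#\text{components}}$. Equivalently, one can note that the annihilator of the saturated lattice $A$ is a connected torus, consistent with \cref{eq: psatrec}.

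The only delicate point is characteristic independence, and it is handled by two observations: the cycle relation has $\pm1$ coefficients, and the leaf argument works over any field.
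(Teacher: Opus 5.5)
Your proof is correct and follows the paper's argument: a cycle yields a linear relation with coefficients $\pm1$, and a forest is handled by peeling off a leaf and inducting. The paper only asserts the saturation and joint-kernel statements. Your identification of the span with the lattice of vectors having zero coordinate sum on each connected component, which is visibly saturated, fills in that part cleanly.
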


We do not need to allow multigraphs here: a repeated edge is a dependence, and is therefore ruled out by
the abort condition before it can occur.

Indeed, a cycle gives a signed linear dependence, while in a forest, we can eliminate a leaf and use induction.

\subsection{The criterion for rigid components}

\begin{theorem}\label{thm:quiver}
Let $Q$ be a Dynkin quiver and $M$ a multiplicity-free module with
root set $\Sigma$, $\#\Sigma=n$, and let $Z_M$ be the corresponding
irreducible component.  Fix an adapted enumeration of a block basis
of $E$ and run the greedy procedure of \cref{def:greedy} or its linear algebraic version
\cref{def:greedy2} for the action of $\Aut_Q(M)$ on $E$.  Then,
\begin{enumerate}
\item \label{part: quiveract} At step $k$, activation occurs if and only if the composition of
\[
\End_Q(M)\to E/F_k,\ \ \  \varphi\mapsto [\varphi,y]\mod F_k
\]
is not surjective, at some (or equivalently, every) $y$  supported exactly on $\Phi_{k-1}$.
\item \label{part: quiverabort} If $k$ is activated, the abort rule at step $k$ takes the combinatorial form: 
abort if and only if $\{i_k,j_k\}$ is already an edge or completes a cycle
in the (acyclic) graph with vertices $\Sigma$ and edges $\{i_t,j_t\}$, $t\in\Phi_{k-1}$.
\item \label{part: quiversucc} The procedure succeeds if and only if $Z_M$ is rigid.
\item \label{part: quiverforest} On success the active edges form a forest on $\Sigma$,
so $\#\Phi\le n-1$; every $y\in E$ supported exactly on $\Phi$ lies in the open
$\Aut_Q(M)$-orbit of $E$, and the $G_V$-orbit of $(x,y)$ is then
open in $Z_M$.
\item \label{part: quiverrank} $\#\Phi=n-r$, where $r$ is the rank of the stabilizer $\Aut_\Pi(\tilde M)$ in $\Aut_Q(M)$ of a point of the open orbit,
viewed as a $\Pi$-module $\tilde M$, and the partition of $\Sigma$ into the connected
components of the active forest depends on no choice made.
\item \label{part: quiverdecomp} Let $\Sigma_1,\dots,\Sigma_r$ be the vertex sets of the connected components of active forest on $\Sigma$.
Let $M=\oplus_{j=1}^r L_j$ be the corresponding decomposition where $L_j=\oplus_{i\in\Sigma_j}M_i$.
Fix $y\in E$ with support $\Phi$ and let $\tilde M$ be the corresponding $\Pi$-module.
Then every $L_j$ is a $\Pi$-submodule of $\tilde M$ and
$\tilde M=\oplus_{j=1}^r L_j$ is the decomposition of $\tilde M$ into $\Pi$-indecomposables.
\end{enumerate}
\end{theorem}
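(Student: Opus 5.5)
The plan is to specialize the general machinery of \cref{sec:setup} to $G=\Aut_Q(M)$ acting on $U=E$ with the adapted flag, and read off each part from results already proved.

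\textbf{Parts (1), (2) and (3).} For \cref{part: quiveract}, note that $\Aut_Q(M)$ is the open subset of units in $\End_Q(M)$, so its Lie algebra is $\End_Q(M)$ with infinitesimal action $\varphi\cdot y=[\varphi,y]$ (up to sign, which is irrelevant for surjectivity). By \cref{eq: autsep} every orbit map is separable. Hence density of the orbit of $y$ in $U_k=E/F_k$ is equivalent to surjectivity of $\varphi\mapsto[\varphi,y]\bmod F_k$, and \cref{prop: greedy} says the two procedures coincide stepwise. Independence of the choice of $y$ follows from transitivity of $T$ on vectors with support $\Phi_{k-1}$.

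For \cref{part: quiverabort}, the characters are $\varepsilon_{i_t}-\varepsilon_{j_t}$. \Cref{lem:cycle} translates $\Z$-dependence (equivalently, since the lattice is saturated, $K$-dependence) into the appearance of a repeated edge or a cycle. Note that after a non-aborted step the graph stays acyclic.

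\Cref{part: quiversucc} is \cref{cor: algo} combined with the criterion recalled before the theorem: $Z_M$ is rigid if and only if $\Aut_Q(M)$ has a dense orbit on $E$.

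\textbf{Parts (4) and (5).} For \cref{part: quiverforest}, acyclicity on $n$ vertices gives $\#\Phi\le n-1$. \Cref{cor: algo} gives density of the orbit of any $y$ with support $\Phi$, and $\pi^{-1}(O_M)=G_V\times^{\Aut_Q(M)}E$ transports this to openness of $G_V\cdot(x,y)$ in $Z_M$.

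For \cref{part: quiverrank}, apply \cref{cor: independence}: $\#\Phi=\rk G-\rk G_y=n-r$, with $G_y=\Aut_\Pi(\tilde M)$. Moreover $S_\Psi$ is independent of choices. By \cref{lem:cycle}, $S_\Psi$ is the torus of elements constant on each connected component, and the partition is recovered from $S_\Psi$: $i,j$ lie in the same component if and only if $\varepsilon_i=\varepsilon_j$ on $S_\Psi$. So the partition depends on no choice.

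\textbf{Part (6).} This is the step I expect to need genuine argument. First, $y$ has blocks only in $E_{a,b}$ with $\{a,b\}$ an active edge, so $y$ maps each $L_j$ into itself. Hence each $L_j$ is $\Pi$-stable and $\tilde M=\bigoplus_j L_j$ as $\Pi$-modules.

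For indecomposability, I would use \cref{lem: indecomprank} for $A=\Pi$. The rank of $\Aut_\Pi(\tilde M)$ equals the number of indecomposable summands counted with multiplicity, and by \cref{part: quiverrank} it is $r$. Since $\tilde M$ already splits into $r$ nonzero summands $L_j$, each $L_j$ must be indecomposable (any further splitting would raise the rank above $r$).

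The subtle point is to check that "rank" in \cref{cor: independence} (the dimension of $H^{\diag}$) agrees with the reductive rank used in \cref{lem: indecomprank}. Both are the dimension of a maximal torus of $\Aut_\Pi(\tilde M)$, so they coincide. A secondary check is that the Krull--Schmidt count applies to the finite-dimensional algebra $\Pi$, which holds.
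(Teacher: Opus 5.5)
Your proposal is correct and follows the paper's proof essentially step by step: it specializes \cref{prop: greedy}, \cref{cor: algo}, \cref{cor: independence}, \cref{lem:cycle} and \cref{lem: indecomprank}, and it uses the same count ($r$ nonzero $\Pi$-stable summands against rank $r$) to get indecomposability in \cref{part: quiverdecomp}. The only difference is cosmetic: in \cref{part: quiverrank} you read off $\#\Phi=n-r$ directly from $\#\Phi=\rk G-\rk G_y$ in \cref{cor: independence}, whereas the paper identifies $r$ with the number of forest components via $\dim S_\Psi$ and then counts edges.
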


\begin{proof}
The differential at $y$ of the $\Aut_Q(M)$-orbit map is
$\varphi\mapsto[\varphi,y]$, so \cref{part: quiveract} is the infinitesimal activation criterion of
\cref{prop: greedy}.  The orbit maps are separable by \cref{eq: autsep}, hence the infinitesimal and
group-level procedures coincide.  \Cref{part: quiverabort} follows from \cref{lem:cycle}: the active characters
are the differences $\varepsilon_{i_t}-\varepsilon_{j_t}$, and adjoining the new
character preserves linear independence exactly when the new edge neither repeats an
edge nor creates a cycle.  \Cref{prop: greedy} together with
$\pi^{-1}(O_M)\simeq G_V\times^{\Aut_Q(M)}E$ proves \cref{part: quiversucc}, and \cref{cor: algo} gives
\cref{part: quiverforest}.

For \cref{part: quiverrank}, \cref{cor: independence} identifies the image of a generic stabilizer in the torus
quotient with the joint kernel $S_\Psi$ of the active characters.  By \cref{lem:cycle} this is
the subtorus whose coordinates are constant on each connected component of the active
forest.  Hence the component partition, and its number $r=\rk
\Aut_\Pi(\widetilde M)$, is independent of the adapted flag.  A forest on $n$ vertices
with $r$ components has $n-r$ edges, giving $\#\Phi=n-r$.

Finally, if $\Sigma_1,\dots,\Sigma_r$ are the components, the support condition on $y$
shows that $L_j=\bigoplus_{i\in\Sigma_j}M_i$ is stable under all reverse-arrow maps, hence
is a $\Pi$-submodule.  Thus $\widetilde M=\bigoplus_jL_j$.  \Cref{lem: indecomprank} says that the total
number of indecomposable summands of $\widetilde M$, counted with multiplicity, equals
the rank $r$ of its automorphism group.  Since the displayed decomposition already has
$r$ nonzero summands, every $L_j$ is indecomposable.
\end{proof}

We will say that $Z_M$ is strongly rigid if $E$ is a strong {\PVS} with respect to $\Aut_Q(M)$.

\section{Quivers of type \texorpdfstring{$A$}{A}} \label{sec: typeA}

Let us now specialize to the case where $Q$ is the equioriented quiver of type $A_m$:
\[
1\to2\to\dots\to m
\]
The indecomposable modules are the
\emph{segment modules} $M_{[a,B]}$, $1\le a\le B\le m$:
\begin{gather*}
(M_{[a,B]})_v=\begin{cases}Ku_v&\text{ for }v\in[a,B],\\0&\text{otherwise}\end{cases}\\
\text{the arrow $v\to v+1$ acting by $u_v\mapsto u_{v+1}$ (and $u_B\mapsto0$).}
\end{gather*}

In this case
\[
M_{[a,B]}\homr M_{[a',B']}\text{ (i.e., $\Hom_Q(M_{[a',B']},M_{[a,B]})\neq0$) if and only if }a\le a'\le B\le B',
\]
in which case  $\Hom_Q(M_{[a',B']},M_{[a,B]})$ is spanned by the map $u_v\mapsto u_v$ ($a'\le v\le B$), $u_v\mapsto0$ ($v>B$).

Similarly,
\[
M_{[a,B]}\extr M_{[a',B']}\text{ (i.e., $\Ext_Q^1(M_{[a,B]},M_{[a',B']})\neq0$) if and only if }a+1\le a'\le B+1\le B',
\]
in which case  $\Ext_Q^1(M_{[a,B]},M_{[a',B']})$ is one-dimensional.

\subsection{Incidence matrices}

Recall that an \emph{incidence matrix} is a zero-one matrix with no zero rows or columns.
We do not specify the size of the matrix, but it is of course not bigger than $n\times n$.
Every permutation matrix is an incidence matrix.
The number $I_n$ of incidence matrices with $n$ ones is the sequence \href{https://oeis.org/A101370}{OEIS A101370}.
In \cite{MR2255427} it was shown that
\[
I_n\sim cn!\beta^n\ \ \text{where }c=\tfrac14\beta e^{-\frac1{2\beta}}\text{ and }\beta=\tfrac1{(\log 2)^2}>1.
\]

Let $\IM$ be an incidence matrix with $n$ ones.
List the positions of the ones in $\IM$ as $(a_i,b_i)$, $i=1,\dots,n$, in lexicographic order.
Take $m=2n$ and set
\[
B_i:=b_i+n,\qquad S_i:=M_{[a_i,B_i]},\qquad
M_{\IM}:=S_1\oplus\dots\oplus S_n
\]
i.e., the module pertaining to the multisegment $[a_i,b_i+n]$, $i=1,\dots,n$.
Clearly $M_{\IM}$ is multiplicity-free.  Note that $a_i\le n<n+1\le B_j$ for all $i,j$.
Therefore, any two of the segments overlap.

We will say that $\IM$ is rigid/strongly rigid if $Z_{M_{\IM}}$ is rigid/strongly rigid. 

Conforming to the conventions of \cref{sec: convij}, we shall write $i\homr j$ (resp., $i\extr j$) if $\Hom_Q(S_j,S_i)\ne0$ (resp., $\Ext^1_Q(S_i,S_j)\ne0$).
Explicitly
\[
i\homr j\iff a_i\le a_j\text{ and }b_i\le b_j,\ \ \ \ i\extr j\iff a_i<a_j\text{ and }b_i<b_j.
\]
In particular, $\homr$ is already transitive, so we do not need to distinguish between $\homr$ and $\homrt$.
Note that $\extr$ is smaller than the strict part of $\homrt$. Also, because we took lexicographic order on the ones,
$i\homr j$ implies $i\le j$.

Let $\V=\{(i,j):i\homr j\}$ and $\U=\{(i,j):i\extr j\}$.

We can identify $\End_Q(M_{\IM})$ with the incidence algebra of $\homr$, namely the subalgebra of $\mathrm{Mat}_n(K)$ given by the span
$\Lieg:=\langle e_{ij}:(i,j)\in\V\rangle$.
The span
$\uu:=\langle e_{ij}:(i,j)\in\U\rangle$ is a two-sided ideal of $\Lieg$.
Thus, we can identify $G=\Aut_Q(M_{\IM})$ with the set of elements of $\Lieg$ with
invertible diagonal part.
The unipotent radical of $G$ is $\rad{G}=1+\nn$ where
$\nn:=\langle e_{ij}:(i,j)\in\V,\ i\neq j\rangle$, and a maximal torus
$T$ is given by the invertible diagonal matrices.  $G$ acts on $\uu$ by
conjugation. Moreover $E\cong\uu$ compatibly with the actions:
under the trace pairing the dual of the extension block for $i\prec j$ is the matrix unit
$e_{ij}$, and the $\Aut_Q(M_\IM)$ action becomes conjugation.

Consequently $\IM$ is rigid if and only if $G$ has a dense orbit on $\uu$.

\begin{remark}
The restriction to incidence matrices entails no loss for rigidity among
multiplicity-free modules whose segments are pairwise overlapping.
Replacing the distinct beginning and end points by their ranks preserves
the relations $\vdash$ and $\prec$, and hence the incidence-algebra pair
$(\Lieg,\uu)$ described above. With the canonical generators
of the one-dimensional Hom and Ext spaces, it therefore preserves the
conjugation representation. Thus the original module is rigid if and
only if the corresponding incidence matrix is rigid.
\end{remark}

There are two symmetries which preserve rigidity and strong rigidity.

\begin{lemma} \label{lem: symmetry}
The rigidity/strong rigidity of $\IM$ is preserved under taking transpose and under rotation by $180^\circ$ (i.e. upending the rows and the columns).
\end{lemma}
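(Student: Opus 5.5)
The plan is to show that both operations on $\IM$ induce an isomorphism of the pair $(G,\uu)$, that is, a group isomorphism $\theta\colon G\to G'$ together with a linear isomorphism $\sigma\colon\uu\to\uu'$ satisfying $\sigma(g\cdot y)=\theta(g)\cdot\sigma(y)$. Such an isomorphism carries orbits to orbits, so it preserves the existence of a dense orbit. It also carries $\rad{G}$ to $\rad{G'}$, so it induces an isomorphism $\uu_{\coinv}\cong\uu'_{\coinv}$. By the proof of the last lemma of \cref{sec:setup}, strong prehomogeneity is equivalent to existence of an open orbit together with $\#\Phi=\dim U_{\coinv}$. Here $\#\Phi=N-\dim(\rad{G}\cdot w)$ by \cref{cor: independence}, and this quantity is intrinsic. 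Hence strong rigidity is also preserved. This reduces everything to exhibiting the isomorphisms.

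\emph{Transpose.} Transposing $\IM$ replaces the positions $(a_i,b_i)$ by $(b_i,a_i)$. The relations $i\homr j\iff a_i\le a_j,\ b_i\le b_j$ and $i\extr j\iff a_i<a_j,\ b_i<b_j$ are symmetric under swapping the two coordinates. So, up to relabelling the ones (the new lexicographic order is some permutation $\tau$ of the old one), the sets $\V$ and $\U$ are unchanged. Conjugation by the permutation matrix of $\tau$ then maps $\Lieg$ onto $\Lieg'$ and $\uu$ onto $\uu'$, giving the required $\theta$ and $\sigma$. This case is routine.

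\emph{Rotation by $180^\circ$.} This replaces $(a_i,b_i)$ by $(r+1-a_i,c+1-b_i)$, where $r\times c$ is the size of $\IM$. Both coordinate orders are reversed, so after relabelling by a permutation as above we get $i\homr' j\iff j\homr i$ and $i\extr' j\iff j\extr i$. In other words, $\V'=\V^{\mathrm{op}}$, $\U'=\U^{\mathrm{op}}$, hence $\Lieg'=\Lieg^{T}$ and $\uu'=\uu^{T}$ (matrix transpose). I take $\theta(g)=(g^{T})^{-1}$, which is a group isomorphism $G\to G'$ because transpose is an anti-automorphism of $\Mat_n$ mapping $\Lieg$ onto $\Lieg'$. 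I take $\sigma(y)=y^{T}$. The check is then
\[
\sigma(gyg^{-1})=(g^{-1})^{T}y^{T}g^{T}=\theta(g)\,\sigma(y)\,\theta(g)^{-1}.
\]

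The only point requiring care is the preliminary reduction. One must confirm that strong rigidity, as defined through $\Psi$ and the flag-dependent procedure, is genuinely invariant under abstract isomorphisms of representations. This follows because a $G$-stable flag and a $T$-eigenbasis are transported along $(\theta,\sigma)$ to a $G'$-stable flag and a $\theta(T)$-eigenbasis. Moreover, $\theta(T)$ is again the diagonal torus in both cases, since permutation conjugation and inverse-transpose both preserve diagonal matrices. The procedure therefore runs identically on both sides, with $\Psi$ corresponding to $\Psi'$ under $\theta^*$.
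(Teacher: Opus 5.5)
Your proposal is correct and follows essentially the same route as the paper. Transposition is handled by relabelling the ones. Rotation is handled by the transpose anti-isomorphism $\Lieg\to\Lieg'$ together with the twisted group map $g\mapsto (g^{-1})^{T}$, which intertwines the two conjugation actions. Your extra argument that strong rigidity transports, via the intrinsic condition $\#\Phi=\dim\uu_{\coinv}$, spells out what the paper compresses into ``both rigidity and strong rigidity are preserved.''
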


\begin{proof}
Transposition interchanges the two coordinate orders on the ones of
$\IM$; it therefore relabels both the incidence algebra $\Lieg$ and its ideal
$\uu$, giving an isomorphism of conjugation representations.  Rotation by
$180^\circ$ reverses both coordinate orders and gives an algebra anti-isomorphism
$\theta:\Lieg\to\Lieg'$, carrying $\uu$ to $\uu'$.  The maps
\[
 g\longmapsto\theta(g^{-1}),\qquad x\longmapsto\theta(x)
\]
intertwine the two conjugation actions because
$\theta(gxg^{-1})=\theta(g^{-1})\theta(x)\theta(g)$.  Thus the representations are
isomorphic, up to passage to the opposite algebra, and both rigidity and strong rigidity
are preserved.
\end{proof}

Let us make the procedure of \cref{thm:quiver} explicit for $M_{\IM}$ in the case at hand.

For $\lambda\in K^{\U}$ put $L(\lambda):=\sum_{x\in\U}\lambda_x\,e_x\in\uu$.

The matrix $M(\lambda)$ of the map $\Lieg\to\uu$, $X\mapsto[X,L(\lambda)]$, in the
bases $(e_v)_{v\in\V}$ of $\Lieg$ and $(e_u)_{u\in\U}$ of $\uu$,
has $\bigl((i,j),(i,j')\bigr)$-entry $\lambda_{(j',j)}$ if $(j',j)\in\U$;
$\bigl((i,j),(i',j)\bigr)$-entry $-\lambda_{(i,i')}$ if
$(i,i')\in\U$; and all other entries $0$.

$\rk M(\lambda)=\dim G\cdot L(\lambda)$ (conjugation orbit), and $\IM$
is rigid if and only if $G$ has a dense orbit on $\uu$.

Define the \emph{containment order} on $\U$:
$y\sqsubseteq x$ iff $i_x\homr i_y$ and $j_y\homr j_x$, where
$x=(i_x,j_x)$, $y=(i_y,j_y)$ --- the endpoints of $x$ weakly enclose
those of $y$.  Call an enumeration $x_1,\dots,x_N$ of $\U$
($N:=\#\U$) \emph{adapted} if it is a linear extension of
$\sqsubseteq$.

It is easy to see the following
\begin{proposition}\label{prop:adapted}
\begin{enumerate}
\item The parameters occurring in the row of $M(\lambda)$
indexed by $x$ are $\lambda_x$ itself (in the two diagonal columns,
with coefficients $\pm1$) and parameters $\lambda_y$ with
$y\sqsubset x$ properly.  Conversely, the transitive closure of the
relation ``$\lambda_y$ occurs in the row of $x$'' is the full
containment order.  In particular the enumerations under which every
parameter in the row of $x_i$ has index $\le i$ are exactly the
adapted ones.
\item For $g\in G$ and $y\in\U$,
$g\,e_y\,g^{-1}\in K^\times e_y+
\langle e_z:z\sqsupset y\ \text{properly}\rangle$.  Consequently,
for every adapted enumeration the subspaces
$F_k:=\langle e_{x_j}:j>k\rangle$ form a complete $G$-stable flag of
$\uu$, and the $e_x$ are $T$-eigenvectors compatible with it, with
characters
\[
\chi_{(i,j)}\;=\;\varepsilon_i-\varepsilon_j
\qquad(\text{where }\varepsilon_a(t)=t_a\ \text{for }t\in T).
\]
\end{enumerate}
\end{proposition}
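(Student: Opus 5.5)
Both parts come down to explicit matrix-unit computations in the incidence algebra $\Lieg$. The one nontrivial input is the comparison of $\homr$ and $\extr$ through the coordinates $(a_i,b_i)$.

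For the first part, compute $[X,L(\lambda)]$ with $X=e_{ij}$, $(i,j)\in\V$. We have $[e_{ij},e_{j'i'}]=\delta_{jj'}e_{ii'}-\delta_{i'i}e_{j'j}$. Thus the column indexed by $v=(i,j)$ has entries $\lambda_{(j,l)}$ in rows $(i,l)$ and $-\lambda_{(l,i)}$ in rows $(l,j)$. This recovers the stated description of $M(\lambda)$.

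Now fix a row $x=(p,q)\in\U$. A parameter $\lambda_{(j',q)}$ appears with $(p,j')\in\V$, and a parameter $\lambda_{(p,i')}$ appears with $(i',q)\in\V$. In the first case $p\homr j'$, and the second index $q$ is unchanged, so $(j',q)\sqsubseteq(p,q)$ by the definition of the containment order. The case $j'=p$ gives $\lambda_x$ itself, coming from the diagonal column $e_{pp}$ with coefficient $+1$; symmetrically, $i'=q$ gives $-\lambda_x$ from column $e_{qq}$. All other occurrences satisfy $y\sqsubset x$ properly.

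For the converse, we must show the transitive closure is all of $\sqsubseteq$. If $y=(i_y,j_y)\sqsubseteq x=(i_x,j_x)$, we go in two moves: $x=(i_x,j_x)\to(i_y,j_x)\to(i_y,j_y)$. Each move changes one index along a single $\homr$-relation, which is legitimate because $\homr$ is transitive here. The one point to check is that the intermediate pair $(i_y,j_x)$ lies in $\U$, i.e.\ $a_{i_y}<a_{j_x}$ and $b_{i_y}<b_{j_x}$. This follows from $i_y\extr j_y\homr j_x$, since strict inequalities combine with weak ones. The final claim about adapted enumerations is then immediate: every parameter in row $x_i$ has index $\le i$ exactly when the enumeration respects the generating relation, hence its transitive closure $\sqsubseteq$.

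For the second part, write $g=t(1+n)$ with $t\in T$ and $n\in\nn$. Then $g e_y g^{-1}$ expands as a sum of terms $e_{k i_y}\cdot e_{y}\cdot e_{j_y l}$ with $k\homr i_y$ and $j_y\homr l$, using that $\Lieg$ is spanned by the $e_{ab}$ with $a\homr b$. The results land in $\uu$ because $\uu$ is an ideal, and their indices $(k,l)\sqsupseteq y$. The diagonal term is $g_{i_y i_y}(g^{-1})_{j_y j_y}e_y$, a nonzero scalar multiple of $e_y$. The remaining terms are properly larger.

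Stability of the flag $F_k$ follows: an adapted enumeration places every $z\sqsupset x_j$ properly after $x_j$, so $F_k$ is preserved. For $t\in T$ we get $te_{ij}t^{-1}=t_it_j^{-1}e_{ij}$, which gives the character $\varepsilon_i-\varepsilon_j$.

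The main thing to be careful about is the intermediate-pair membership in $\U$ in the converse of the first part, and this is handled by the strict/weak inequality argument above. The rest is bookkeeping of signs and indices.
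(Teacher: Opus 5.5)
Your proposal is correct and follows essentially the same route as the paper. The row entries are read off from matrix-unit commutators, the converse factors a containment $y\sqsubseteq x$ through the intermediate pair $(i_y,j_x)$, and part 2 expands $g\,e_y\,g^{-1}$ into products $e_{k i_y}e_ye_{j_y l}$ of matrix units of $g$ and $g^{-1}$. You also supply the check that $(i_y,j_x)\in\U$, via $i_y\extr j_y\homr j_x$, which the paper only asserts.
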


\begin{proof}
\begin{enumerate}
\item By the entry description above, the
row of $x=(i,j)$ contains $\lambda_{(j',j)}$ for $i\homr j'$
(type~I; the case $j'=i$ is $\lambda_x$ in the diagonal column
$(i,i)$) and $\lambda_{(i,i')}$ for $i'\homr j$ (type~II; the case
$i'=j$ is $-\lambda_x$ in the diagonal column $(j,j)$); the others
are proper containments sharing an endpoint.  Conversely a proper
containment $y\sqsubset x$ factors through $w:=(i_y,j_x)$, which
lies in $\U$, and each of
$y\sqsubseteq w\sqsubseteq x$ is of one of the two displayed
shared-endpoint types.

\item The correction terms of $g\,e_y\,g^{-1}$ are spanned by
products $e_v\,e_y\,e_w$ with $v=(a,i_y)$, $w=(j_y,c)\in\V$, which
land at $(a,c)\sqsupseteq y$, properly unless $v,w$ are both
diagonal; the diagonal contribution is
$(t_{i_y}/t_{j_y})\,e_y\in K^\times e_y$.  For an adapted
enumeration, if $y\in\{x_{k+1},\dots,x_N\}$ then every proper
$z\sqsupset y$ comes later still, so $g\,e_y\,g^{-1}\in F_k$: each
$F_k$ is $G$-stable.  The diagonal torus acts on $e_{(i,j)}$ by
$t\,e_{(i,j)}\,t^{-1}=(t_i/t_j)\,e_{(i,j)}$, giving the weights,
and the character of $G$ on $F_{k-1}/F_k$ restricts to that weight.
\end{enumerate}
\end{proof}

Combining with \cref{thm:quiver} we get the following explicit algorithm.

\begin{theorem}\label{thm:rigidity}
Fix an enumeration $x_1,\dots,x_N$ of $\U$ adapted to containment
and run the greedy procedure: $\Phi_0:=\emptyset$; at step $i$, if
the rows $x_1,\dots,x_i$ of $M(\lambda)$ are linearly dependent for
one (equivalently, every) $\lambda$ with
$\supp\lambda=\Phi_{i-1}$, set $\Phi_i:=\Phi_{i-1}\cup\{x_i\}$, and
otherwise set $\Phi_i:=\Phi_{i-1}$; abort if $\Phi_i$, viewed as a
set of edges on the $n$ ones, contains a cycle.  If all $N$ steps
complete, the procedure \emph{succeeds}, with \emph{active set}
$\Phi:=\Phi_N$.  Then:
\begin{enumerate}
\item \label{part: rigsucc} The procedure succeeds if and only if $\IM$ is rigid.
\item \label{part: rigforest} On success, $\Phi$ is a forest
with at most $n-1$ edges, and $\rk M(\lambda)=\#\U$ for
\emph{every} $\lambda$ with $\supp\lambda=\Phi$ --- in particular
for the indicator vector of $\Phi$;
\item \label{part: rigabort} On abort at step $i$, the rows $x_1,\dots,x_i$ of
$M(\lambda)$ are linearly dependent for \emph{every} $\lambda$; in
particular $\rk M(\lambda)<\#\U$ identically.
\end{enumerate}
\end{theorem}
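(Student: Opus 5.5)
The plan is to identify the procedure of \cref{thm:rigidity} with the linear-algebra version \cref{def:greedy2}, run for the conjugation action of $G=\Aut_Q(M_\IM)$ on $\uu$ along the flag of \cref{prop:adapted}. Once that identification is in place, \cref{thm:quiver} and \cref{prop: greedy} give most of the conclusions.

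First I translate the data. By \cref{prop:adapted}(2), an adapted enumeration gives a $G$-stable complete flag whose eigenvectors $e_{x_k}$ have characters $\varepsilon_{i}-\varepsilon_{j}$. Under the isomorphism $E\cong\uu$, this is an adapted block enumeration in the sense of \cref{sec: convij}, since all blocks are one-dimensional and the two containment orders agree. Since $\Lieg=\Lie G$ and the differential of the orbit map at $L(\lambda)$ is $X\mapsto[X,L(\lambda)]$, the functional $\rho_{x}(L(\lambda))\in\Lieg^*$ is exactly the row of $M(\lambda)$ indexed by $x$.

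Next I check that dependence of rows $x_1,\dots,x_i$ at support $\Phi_{i-1}$ is the activation condition of \cref{def:greedy2}. Two points need care. By \cref{prop:adapted}(1), for an adapted enumeration the rows $x_1,\dots,x_i$ involve only $\lambda_{x_1},\dots,\lambda_{x_i}$; this matches \cref{eq: deprhok}, so the rows may be evaluated on $U_i$. Independence of the choice of $\lambda$ with the given support holds because $T$ acts transitively on such $\lambda$ before abortion, as remarked after \cref{def:greedy2}. The abort rule (a cycle among edges $\{i_x,j_x\}$, $x\in\Phi_i$) is the $K$-linear dependence of the $d\chi$'s by \cref{lem:cycle}. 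A repeated edge cannot arise here, since $(i,j)$ determines the edge. With this, \cref{part: rigsucc} follows from \cref{thm:quiver}\cref{part: quiversucc}, or directly from \cref{prop: greedy} together with \cref{eq: autsep}.

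For \cref{part: rigforest}, the forest property and $\#\Phi\le n-1$ come from \cref{thm:quiver}\cref{part: quiverforest}. The proof of \cref{prop: greedy} shows that for every $\lambda$ with $\supp\lambda=\Phi$ the rows $\rho_1,\dots,\rho_N$ are independent, i.e.\ $\rk M(\lambda)=N=\#\U$; the indicator vector is one such $\lambda$.

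The main obstacle is \cref{part: rigabort}, which claims dependence for \emph{every} $\lambda$, whereas \cref{prop: greedy} only gives it on the support stratum $\Phi_i$. I would argue as follows. Dependence of rows $x_1,\dots,x_i$ is a Zariski-closed condition on $\lambda$, given by vanishing of the maximal minors of a submatrix with entries linear in $\lambda_{x_1},\dots,\lambda_{x_i}$. So it suffices to show that $\rk$ of these rows is $<i$ on a dense set of $U_i$. Suppose instead that some $\lambda$ gave $i$ independent rows. Then $L(\lambda)$ would have an open separable orbit in $U_i$. By the final argument of \cref{prop: greedy}, after moving by $G$ (which preserves the rank of the rows, since the orbit map is equivariant and the flag is $G$-stable) we could assume $\supp\lambda\in\{\Phi_{i-1},\Phi_i\}$. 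Both cases are excluded: $\Phi_{i-1}$ because step $i$ activated, and $\Phi_i$ by \cref{eq: abortcond}. Hence the rows are dependent for all $\lambda$, and therefore $\rk M(\lambda)<\#\U$ identically.
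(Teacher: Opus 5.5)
Your proposal is correct and follows the paper's route. The paper simply derives the theorem from \cref{prop:adapted} and \cref{thm:quiver}, and hence from \cref{prop: greedy} and \cref{eq: autsep}, exactly as you do. For \cref{part: rigabort}, the last sentence of \cref{prop: greedy} (no open separable orbit in $U_k$ after an abort at $k$) already says the rows $x_1,\dots,x_i$ are dependent at every point, so your argument just re-derives it, and the Zariski-closedness remark is unnecessary.
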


As noted before, in case of success, the algorithm also gives the indecomposable decomposition with respect to $\Pi$.
Write $P(\Phi)$ for the partition of the $n$ ones into connected components of the active forest $\Phi$.

\begin{theorem}\label{thm:rigidityinv}
Suppose that $\IM$ is rigid.  Then, for every adapted enumeration:
\begin{enumerate}
\item \label{part: invcount} $\#\Phi=\#\U-\varrho$, where $\varrho$ is the generic rank
of $M(\lambda)$ with all diagonal columns deleted (equivalently
$\varrho=\dim[\nn,L(\lambda)]$ for generic $\lambda$);
\item \label{part: invpartition} The partition $P(\Phi)$ does not depend on the enumeration:
it is the partition of the ones by the subtorus $S_\Psi\subseteq T$ of \cref{def: Spsi}
 --- diagonal tori constant on the blocks --- i.e.\ $a\sim b$ iff $t_a=t_b$ for all $t\in S_\Psi$;
\item \label{part: invblocks} The number of blocks of $P(\Phi)$ equals the rank $r$ of
the stabilizer of the dense orbit, and $\#\Phi=n-r$.
\end{enumerate}
\end{theorem}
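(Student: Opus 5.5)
The plan is to deduce all three parts from \cref{cor: independence} and \cref{thm:quiver}, applied to $G=\Aut_Q(M_\IM)$ acting on $\uu\cong E$ with the flag of \cref{prop:adapted}.

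Part \cref{part: invcount} comes from the identity $\#\Phi=N-\dim(\rad{G}\cdot w)$ in \cref{cor: independence}, with $N=\#\U$ and $w$ in the open orbit. First, the orbit maps are separable by \cref{eq: autsep}. Hence the tangent space at $w$ of $\rad{G}\cdot w$ is the image of $\Lie\rad{G}=\nn$ under $X\mapsto[X,w]$. So $\dim(\rad{G}\cdot w)=\dim[\nn,L(\lambda)]$ for $\lambda$ in the open dense set of parameters with $L(\lambda)\in\orbit$. One point needs care here: the orbit map of the unipotent subgroup must also be separable. I would argue this via the stabilizer: $\rad{(G_w)}$ is the unit group of $\End_\Pi(\tilde M)$ intersected with $1+\nn$, i.e.\ $1+(\text{an open piece of the linear space }\nn\cap\End_\Pi(\tilde M))$, so it is smooth with Lie algebra $\{X\in\nn:[X,w]=0\}$. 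Rank–nullity then gives the equality. Next, the matrix of $X\mapsto[X,L(\lambda)]$ restricted to $\nn$ is exactly $M(\lambda)$ with the diagonal columns $(i,i)$ deleted, because $\Lieg=\Liet\oplus\nn$ and the columns are indexed by $\V$. Finally, the rank on the open orbit equals the generic rank $\varrho$, since the orbit is dense and rank is lower semicontinuous; the maximal rank is attained on an open set, which meets the orbit, and rank is constant along the orbit.

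Part \cref{part: invpartition} follows by combining the first part of \cref{cor: independence} with \cref{lem:cycle}. The first part of \cref{cor: independence} says $S_\Psi$ is the image in $G^{\diag}=T$ of the generic stabilizer, and so it is independent of the flag, hence of the adapted enumeration. By \cref{lem:cycle}, $S_\Psi$, the joint kernel of the characters $\varepsilon_a-\varepsilon_b$ over the edges $\{a,b\}\in\Phi$, is exactly the set of diagonal $t$ that are constant on each connected component of the forest. The components are then recovered from $S_\Psi$ as the classes of the relation $a\sim b\iff t_a=t_b$ for all $t\in S_\Psi$. One direction of this recovery is immediate. For the other, if $a,b$ lie in different components, take $t$ equal to $1$ on the component of $a$ and $2$ elsewhere; this uses that $K^\times$ has an element $\ne1$.

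Part \cref{part: invblocks} is \cref{thm:quiver} \cref{part: quiverrank} transported through $E\cong\uu$. Directly: $\rk G_w=\dim S_\Psi$, which equals the number of components $r$, and a forest on $n$ vertices with $r$ components has $n-r$ edges.

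I expect the only real obstacle to be the separability of the $\rad{G}$-orbit map in part \cref{part: invcount}. If the stabilizer argument above turns out awkward, I would instead use the alternative count from \cref{cor: independence}, $\#\Phi=\rk G-\rk G_w$. I would then compare $\dim\Lieg-\dim\ker=\#\U$ (full rank) with the kernel of the $\nn$-restricted map. The Lie algebra kernel $\End_\Pi(\tilde M)$ splits as (scalars on each $L_j$) $\oplus$ (its nilpotent part in $\nn$), which has dimension $r+\dim\ker|_\nn$. This gives $\varrho=\#\U-(n-r)$ purely linear-algebraically.
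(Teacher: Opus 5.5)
Your proposal is correct and follows the paper's proof essentially step for step. Part (1) comes from the identity $\#\Phi=N-\dim(\rad{G}\cdot w)$ of \cref{cor: independence}, and parts (2)--(3) come from identifying $S_\Psi$ with the image of the generic stabilizer, then applying \cref{lem:cycle} and the forest edge count.

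Your extra justification of $\dim((1+\nn)\cdot L)=\dim[\nn,L]$ via the stabilizer $(1+\nn)\cap G_L=1+(\nn\cap\{X:[X,L]=0\})$ fills a step the paper leaves implicit. That stabilizer is in fact the whole affine space, not an open piece, since every element of $1+\nn$ is a unit.
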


\begin{proof}
Let $L=L(\lambda)$ be a point in the open orbit.  \Cref{cor: independence} gives
\[
 \#\Phi=\dim\uu-\dim((1+\nn)\cdot L)=\#\U-\dim[\nn,L]=\#\U-\varrho,
\]
which proves \cref{part: invcount}.  The same corollary identifies the image of the generic stabilizer in
$T$ with $S_\Psi$.  Since the active characters are
$\varepsilon_i-\varepsilon_j$, \cref{lem:cycle} identifies $S_\Psi$ with the diagonal torus
whose coordinates are constant on each connected component of the active forest.  Hence
the component partition is intrinsic, proving \cref{part: invpartition}.  If it has $r$ blocks, then
$\dim S_\Psi=r$, so $r$ is the rank of the generic stabilizer.  Finally a forest on $n$
vertices with $r$ components has $n-r$ edges, and therefore $\#\Phi=n-r$.
\end{proof}

\begin{remark} \label{rem: covers}
The weights of $T$ on the $\rad{G}$-coinvariants correspond to the \emph{covers} in $\U$,
namely those $x=(i,j)\in\U$ for which there is no $z\ne i,j$ such that
$i\homr z\extr j$ or $i\extr z\homr j$.
These are exactly the minimal elements of the containment order, i.e.\ the pairs whose row of
$M(\lambda)$ contains no parameters besides $\pm\lambda_x$.

Matrix-unit commutators show
\[
 [\nn,\uu]=\langle e_{ij}:(i,j)\text{ is not a cover}\rangle.
\]
Since the elementary unipotents $1+t e_{ab}$ generate $1+\mathfrak n$, the span of the conjugation differences $g\cdot x-x$ is precisely $[\mathfrak n,\mathfrak u]$.
If $z$ is an intermediary, $e_{ij}$ occurs as $[e_{iz},e_{zj}]$ in one of the two allowed
configurations.  Conversely every matrix unit appearing in a commutator has such an
intermediary.  Therefore the coinvariant basis is indexed exactly by covers.

In the case of success, the covers therefore are always activated. (See \cref{lem: coinvactive}.)
Hence, in this case the graph formed by the covers is acyclic.
\end{remark}

We will say that $\IM$ is strongly rigid if $\uu$ is a strong {\PVS} of $G$ in the sense of \cref{def: strongPVS}.
By \cref{lem: coinvactive}, on success the covers are always contained in the active set $\Phi$,
so $\IM$ is strongly rigid if and only if it is rigid and $\Phi$ consists precisely of the covers.

\begin{remark} \label{rem: anyfield}
The linear algebra algorithm can be tested over the prime field of $K$.

We expect that the rigidity is independent of the characteristic of $K$ (for any Dynkin quiver).
By \cref{lem:cycle} the acyclicity half of the criterion is characteristic-free, so the only possible source of
characteristic dependence is the rank test.
\end{remark}

We ran the algorithm for all incidence matrices with $n\le10$ ones, taking $M(\lambda)$ with $\lambda$ the indicator function of
the active set.
In all examples, in each step $k$ where the procedure does not abort, the first $k$ rows are a unimodular matrix, i.e.
its Smith normal form is $(I_k|0)$.
In the abort case, the $k$-th row is a linear combination of the previous rows with integer coefficients.
Thus, for every incidence matrix with at most ten ones, the procedure and rigidity verdict are independent of the characteristic.
The results are summarized in Table~\ref{tab: rigidcounts}.

\begin{table}[htbp]
\centering
\caption{Rigid and strongly rigid incidence matrices with $n\le 10$ ones.}
\label{tab: rigidcounts}
\begin{tabular}{|r|r|r|r|r|r|}
\hline
$n$ & all & rigid & strongly rigid & rigid \% & strongly rigid \%\\
\hline
1 & 1 & 1 & 1 & 100.00 & 100.00\\
2 & 4 & 4 & 4 & 100.00 & 100.00\\
3 & 24 & 24 & 24 & 100.00 & 100.00\\
4 & 196 & 194 & 184 & 98.98 & 93.88\\
5 & 2016 & 1922 & 1660 & 95.34 & 82.34\\
6 & 24976 & 21868 & 16668 & 87.56 & 66.74\\
7 & 361792 & 273156 & 180510 & 75.50 & 49.89\\
8 & 5997872 & 3634216 & 2066529 & 60.59 & 34.45\\
9 & 111969552 & 50476221 & 24654017 & 45.08 & 22.02\\
10 & 2324081728 & 722322971 & 303381713 & 31.08 & 13.05\\
\hline
\end{tabular}
\end{table}

\begin{example}[flag-dependent active sets] \label{exam: flagdep}
Consider
\[
 \IM=\begin{pmatrix}1&1&0&0\\0&0&0&1\\0&0&1&0\end{pmatrix}
\]
and label its ones by
$p_0=(1,1)$, $p_1=(1,2)$, $p_2=(2,4)$, and $p_3=(3,3)$.  Put
\[
 a=(p_1,p_2),\quad b=(p_0,p_2),\quad
 c=(p_1,p_3),\quad d=(p_0,p_3).
\]
The only strict containment relations are $a<b$ and $c<d$.  Running the algorithm in the
adapted order $(a,b,c,d)$ gives the active set $\{a,c,d\}$, whereas the adapted order
$(a,c,d,b)$ gives $\{a,c,b\}$.  Both runs succeed.  In each case the active graph is a
tree, and its connected-component partition is the same, in accordance with \cref{cor: independence}.
\end{example}

\begin{remark}
The case of general multisegments of multiplicity one can be dealt with similarly.
We chose to restrict our attention to incidence matrices since it already encapsulates the essential difficulties.
\end{remark}

\begin{remark}
Suppose that $\IM$ is rigid. Run the algorithm and obtain $\Phi$.
Consider the matrix $M(\lambda)$ where $\lambda$ is the indicator function of $\Phi$.
The nonzero entries of $M(\lambda)$ define a bipartite graph with sides $\U$ and $\V$.
Perform the following process. For each vertex in $\V$ with a unique neighbor $x$ in $\U$
remove $x$ (and the edges containing it) from the graph and repeat until this is no more possible.
Did we eliminate all vertices of $\U$ this way?
If so, this will give a certificate for the full rank of $M(\lambda)$ over any field.
\end{remark}

\subsection{The permutation case}

In the case where the incidence matrix is a permutation matrix, a precise combinatorial criterion for rigidity was given in \cite{MR3866895}.

For consistency with [ibid.], for any permutation $\sigma\in S_n$ we denote by $\IM_\sigma$ the permutation matrix of $w_0\sigma$
where $w_0$ is the longest permutation $w_0(i)=n+1-i$.

Recall that a permutation $\sigma$ in $S_n$ is called \emph{smooth} if it is $4231$ and $3412$ avoiding, i.e., if $\IM_\sigma$
does not contain the matrices
\[
\begin{pmatrix}1&0&0&0\\0&0&1&0\\0&1&0&0\\0&0&0&1\end{pmatrix},\qquad
\begin{pmatrix}0&1&0&0\\1&0&0&0\\0&0&0&1\\0&0&1&0\end{pmatrix}
\]
(the permutation matrices of $1324$ and $2143$) as $4\times 4$-submatrices.

The terminology is justified by the fact that $\sigma$ is smooth if and only if the Schubert variety pertaining to  $\sigma$
(a closed subvariety of the complete flag variety of an $n$-dimensional vector space) is smooth \cite{MR1051089}.
(We will not make use of this fact.)

\begin{theorem}[\cite{MR3866895}*{Theorem 1.2}; see also \cite{MR4847251}*{Theorem 6.1}]
$\IM_\sigma$ is rigid if and only if $\sigma$ is smooth.
\end{theorem}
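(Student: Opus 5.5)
The plan is to prove both directions with the machinery of \cref{sec:setup} and \cref{thm:rigidity}. The main tool for the easy direction is \cref{rem: covers}: on success every cover is active, so the cover graph must be acyclic. Throughout, the ones of $\IM_\sigma$ are labelled in lexicographic order. Since $\IM_\sigma$ is a permutation matrix, all $a_i$ are distinct and all $b_i$ are distinct. Hence $i\homr j$ with $i\ne j$ coincides with $i\extr j$, and $\U$ is exactly the set of strictly increasing pairs of ones (the inversion-free pairs).

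\emph{Non-smooth $\Rightarrow$ not rigid.} Suppose $\IM_\sigma$ contains the pattern $2143$. Choose an occurrence $p_1=(r_1,c_2)$, $p_2=(r_2,c_1)$, $p_3=(r_3,c_4)$, $p_4=(r_4,c_3)$, with $r_1<\dots<r_4$ and $c_1<\dots<c_4$, that minimizes the total area of the rectangles spanned by the four pairs $(p_1,p_3),(p_1,p_4),(p_2,p_3),(p_2,p_4)$. These four pairs lie in $\U$ and form the $4$-cycle $p_1p_3p_2p_4$.

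I claim they are all covers. If, say, a one $q$ lay strictly between $p_1$ and $p_3$ in both coordinates, then $q$ could replace $p_1$ or $p_3$ in a smaller occurrence of $2143$ or of $1324$. Establishing this requires a short case analysis according to the position of $q$ relative to $p_2$ and $p_4$. For this reason I would minimize simultaneously over occurrences of both patterns.

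The same argument applies to $1324$ with $p_1=(r_1,c_1)$, $p_2=(r_2,c_3)$, $p_3=(r_3,c_2)$, $p_4=(r_4,c_4)$. Here the pairs $(p_1,p_2),(p_1,p_3),(p_2,p_4),(p_3,p_4)$ form a $4$-cycle; the pair $(p_1,p_4)$ is not a cover, and that causes no problem.

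So for a minimal occurrence of either pattern the covers contain a cycle. By \cref{lem: coinvactive} and \cref{rem: covers}, the procedure cannot succeed, and \cref{thm:rigidity}\cref{part: rigsucc} gives non-rigidity. The step to check carefully is the minimality bookkeeping: every interloper $q$ must yield a strictly smaller occurrence of one of the two patterns.

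\emph{Smooth $\Rightarrow$ rigid.} I would argue by induction on $n$, using a recursive structure theorem for smooth permutations (Gasharov's characterization, or the Lakshmibai--Sandhya-type decomposition). Its content is that a $3412$- and $4231$-avoiding permutation has a one in an extreme row or column whose deletion leaves a smooth permutation, with a controlled shape for the pairs involving that one. By \cref{lem: symmetry} we may assume the removable one is $p_n$, the last in lexicographic order.

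Take an adapted enumeration that first lists $\U'$, the pairs not involving $p_n$, in an adapted order for the smaller matrix, and then the pairs $(i,n)$. This is legitimate because $(i,n)$ is never properly contained in a pair avoiding $n$. The rows of $M(\lambda)$ for $\U'$ restricted to the columns of $\V'$ form exactly the smaller matrix. By induction, the first $\#\U'$ steps therefore reproduce the successful run for $\IM'$. The extra columns $(i,n)\in\V$ only enlarge the row space, which can only prevent activations; the acyclicity of the active forest persists.

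It remains to process the new rows $(i,n)$, $i\homr n$. Each such row involves the new columns $(i',n)$ together with $\lambda_{(i,i')}$. The claim to prove is that at most one new pair activates in each connected component of the active forest. Then the new edges join $p_n$ to distinct components, and no cycle is created. To show this, I would exhibit the relevant columns $(i',n)$ explicitly, using the shape of the smooth decomposition (the ones $i$ with $i\homr n$ form a chain or an antichain in a prescribed position) to produce a triangular minor.

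This last verification is the main obstacle. It is where the avoidance of $3412$ and $4231$ must actually enter, and I expect it to require checking the few shapes allowed by the structure theorem one at a time.
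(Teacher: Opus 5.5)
\textbf{The gap is in the direction ``non-smooth $\Rightarrow$ not rigid.''} The cover-cycle obstruction of \cref{rem: covers} is too weak for this direction, and your claim that a minimal occurrence of $1324$ or $2143$ yields a $4$-cycle of covers is false.

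Take $\sigma=45312$. Then $\IM_\sigma$ has ones at $(1,2),(2,1),(3,3),(4,5),(5,4)$. The permutation $w_0\sigma=21354$ has inversions only at positions $\{1,2\}$ and $\{4,5\}$. Hence the $2143$ on rows $1,2,4,5$ is the \emph{only} occurrence of either pattern, so it is trivially minimal. Yet the one $(3,3)$ is an intermediary for all four of its pairs, and there is no smaller occurrence to swap in.

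In fact the covers here are exactly the four pairs joining $(3,3)$ to the other ones. They form a spanning tree, so acyclicity of the active set gives no obstruction at all. Non-rigidity in this case (and for the whole family $\upsilon_r$ of \cref{lem: minviol}) is a genuine rank phenomenon. By \cref{prop: covertree}, when the covers form a spanning tree, rigidity is equivalent to $\rk M(\lambda)=\#\U$ for $\lambda$ supported on the covers. The paper then exhibits an explicit linear relation among the rows indexed by the four non-cover pairs $((1,2),(4,5))$, $((1,2),(5,4))$, $((2,1),(4,5))$, $((2,1),(5,4))$.

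Your bookkeeping also fails where a cover cycle does exist. For $\sigma=632541$ (so $w_0\sigma=145236$, a thickening of $4231$), the cover graph is a single $6$-cycle. So no occurrence has all four of its pairs as covers.

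\textbf{What is missing is a global reduction in place of a local pattern argument.} Rigidity is not known to pass to arbitrary sub-permutations, so the paper deletes only \emph{corner} ones. The first half of \cref{lem: simpind}, i.e.\ the bound $\rk M_{\IM}(\lambda)\le\rk M_{\IM'}(\lambda)+(m-1)$, together with the four symmetries, reduces everything to minimal violators. \cref{lem: minviol} classifies these into three families. Two of them have cover cycles, via \cref{lem: thickcycle}; these are in general not $4$-cycles. The third is $\upsilon_r$, handled by the rank relation above.

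Your other direction is in line with the paper's: \cref{lem: red1row} is the Gasharov-type input, and \cref{lem: simpind}, rotated to your orientation, is the block-triangular step. However, the triangular minor exists only in the chain (staircase) configuration. For an antichain of ones below $p_n$, the block of new rows and new columns has rank at most one. With a chain, $\rk M(\lambda)\ge\rk M_{\IM'}(\lambda)+\rk D$ gives rigidity directly, with no need to track activations per component.
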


We will revisit this result in the appendix and show a little more.
\begin{theorem} \label{thm: perm}
The following conditions are equivalent.
\begin{enumerate}
\item \label{part: permrigid} $\IM_\sigma$ is rigid.
\item \label{part: permstrong} $\IM_\sigma$ is strongly rigid.
\item \label{part: permsmooth} $\sigma$ is smooth.
\end{enumerate}
\end{theorem}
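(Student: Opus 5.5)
Since (2)$\Rightarrow$(1) holds by definition and (1)$\Leftrightarrow$(3) is the cited theorem of \cite{MR3866895}, the only new content is (3)$\Rightarrow$(2). I would prove it directly, without going through rigidity. The plan is to check the two conditions of the characterization lemma of strong \PVS s following \cref{def: strongPVS}. By \cref{rem: covers}, the coinvariant weights are the characters $\varepsilon_i-\varepsilon_j$ of the covers $(i,j)\in\U$. Condition (1) of that lemma, via \cref{lem:cycle}, is therefore that the graph $\mathcal C$ on the $n$ ones whose edges are the covers is a forest. Condition (2) is that $1+\nn$ acts transitively on the affine subspace $L_0+[\nn,\uu]$, where $L_0=\sum_{x\text{ cover}}e_x$.

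\textbf{Acyclicity of the cover graph.} For a permutation matrix the ones sit at $(a,\tau(a))$ with $\tau=w_0\sigma$. A pair $i\extr j$ is a cover exactly when the rectangle spanned by the ones $i$ and $j$, with the boundary rows and columns included, contains no other one. Suppose the cover graph has a cycle. Take a shortest one and look at its extreme vertices, say the ones with minimal and maximal row. I expect to show that this forces four ones in the configuration $1324$ or $2143$, that is, a $4231$ or $3412$ pattern in $\sigma$. The key observation is that two covers which share a vertex and ``turn'' in the plane force a one in the opposite corner region. By \cref{lem: symmetry} (transpose and $180^\circ$ rotation preserve smoothness, since they conjugate the pattern set to itself) the number of cases can be cut down.

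\textbf{Transitivity on the fiber.} Rather than proving transitivity directly, I would use the count $\#\Phi=\#\U-\dim[\nn,L]$ from \cref{thm:rigidityinv}. It suffices to exhibit an $L$ supported on the covers with $[\nn,L]=[\nn,\uu]$, i.e.\ with $\dim[\nn,L]$ equal to the number of non-covers. Take $L=L_0$. Then $(1+\nn)\cdot L_0$ is an orbit of a unipotent group inside the affine space $L_0+[\nn,\uu]$. Orbits of unipotent groups on affine varieties are closed (\cite{MR130878}), so if the orbit has full dimension it equals the whole fiber. The fiber over any full-support coinvariant point is then handled by the torus action.

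The dimension claim amounts to the following: for each non-cover $(i,j)$, $e_{ij}$ lies in $[\nn,L_0]+\langle e_z : z\sqsupset(i,j)\text{ properly}\rangle$, which gives triangularity along the containment order. For a non-cover there is an intermediary $z$ with $i\homr z\extr j$ or $i\extr z\homr j$. Choose one that is extremal, for instance closest to $j$, so that $(z,j)$ or $(i,z)$ is itself a cover. Then $[e_{iz},e_{zj}]$ produces $e_{ij}$ plus terms from other covers through $e_{iz}$. These extra terms lie strictly higher in the containment order unless two covers give competing contributions at the same position. Smoothness should exclude exactly such collisions, since a collision is again a pattern of four ones forming $1324$ or $2143$.

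This last step is the main obstacle: the unitriangularity of the non-cover part of $M(\mathbf 1_{\text{covers}})$ has to be controlled precisely by pattern avoidance. I would first attempt an explicit choice of column (a pair in $\V$) for each non-cover row, and check triangularity by a case analysis on the relative positions of the relevant ones, again reduced by \cref{lem: symmetry}. If that becomes unwieldy, the fallback is induction on $n$. Removing an extremal one (say the one in the first row) from a smooth permutation leaves a smooth one, and I would track how covers and non-covers change, in the spirit of the inductive proof in \cite{MR3866895}.
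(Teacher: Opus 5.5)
Your logical frame is fine. (2)$\Rightarrow$(1) is trivial, and quoting \cite{MR3866895} for (1)$\Leftrightarrow$(3) is legitimate; the paper instead reproves (1)$\Rightarrow$(3) through minimal violators, cover cycles stable under thickening, and an explicit row relation for $\upsilon_r$. Reducing transitivity on the fiber to $[\nn,L_0]=[\nn,\uu]$ via closedness of unipotent orbits is also correct. Moreover, once you quote (3)$\Rightarrow$(1), acyclicity of the covers comes for free from \cref{lem: coinvactive} and \cref{rem: covers}. By \cref{cor: independence}, strong rigidity is then equivalent to $\dim[\nn,L]=\dim[\nn,\uu]$ on the open orbit. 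So your unfinished cycle argument is dispensable, and everything rests on that surjectivity, which you do not prove.

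The mechanism you propose for it does not work. If $(z,j)$ is a cover, then $[e_{iz},L_0]=\sum_{(z,c)\text{ cover}}e_{ic}-\sum_{(c,i)\text{ cover}}e_{cz}$, and no $(c,z)$ can contain $(i,j)$, since that would need $j\homr z$. Symmetrically, if $(i,z)$ is a cover, $[e_{zj},L_0]$ contains terms $e_{zc}$ with $(j,c)$ a cover, and $(z,c)$ never contains $(i,j)$. This has nothing to do with $1324$/$2143$ collisions. For the $5\times5$ identity matrix ($\sigma=w_0$, smooth), the only commutators producing $e_{24}$ are $[e_{23},L_0]=e_{24}-e_{13}$ and $[e_{34},L_0]=e_{35}-e_{24}$. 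Neither $(1,3)$ nor $(3,5)$ contains $(2,4)$. So the non-cover block is not unitriangular for $\sqsubseteq$, and you offer no replacement order or argument.

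Your fallback (``remove an extremal one and track the covers'') is where the paper's proof actually lives, but it needs an idea you have not supplied. One removes the first-row one $x_1$ only after arranging, via $\sigma\mapsto\sigma^{-1}$ and \cref{lem: red1row}, that the ones to the right of $x_1$ form an increasing chain $z_2\extr\cdots\extr z_m$. \cref{lem: red1row} is a three-case pattern argument that genuinely uses smoothness. With this staircase, \cref{lem: simpind} does the following:
\begin{enumerate}
\item It gives the block form $M(\lambda)=\begin{pmatrix}M_{\IM'}(\lambda)&0\\E&D\end{pmatrix}$.
\item It shows that the covers of $\IM$ are those of $\IM'$ together with $(x_1,z_2)$, and that each $(z_{k-1},z_k)$ is a cover of $\IM'$.
\item It runs the greedy procedure with $\U'$ first. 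The row $(x_1,z_2)$ activates and adds a leaf edge. Each row $(x_1,z_k)$, $k\ge3$, is independent of all earlier rows because of the entry $\lambda_{(z_{k-1},z_k)}=1$ in the column $(x_1,z_{k-1})$, where all earlier rows vanish.
\end{enumerate}
This yields $\Phi=$ covers, i.e.\ acyclicity and surjectivity at once (\cref{prop: smoothstrong}). Without the chain condition on the window, $x_1$ can acquire several cover edges and the new rows lose this triangular shape. Your induction, as stated, has no mechanism to control them.
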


\begin{remark} \label{rem: LMgen}
For a general regular multisegment, the equivalent combinatorial and representation-theoretic criteria were established in \cite{MR3866895}*{Theorem~7.1};
the equivalent rigidity formulation is stated explicitly in \cite{MR4847251}*{Theorem 6.4}.
(Regular means that the beginning points are distinct and the end points are distinct, but it is not assumed that any two segments overlap.)
In principle, it should be possible to generalize \cref{thm: perm} for this case using a similar argument to the one in the appendix. However, we will not pursue this further here.
\end{remark}

\begin{remark}
The permutation matrices whose cover graph is acyclic were classified by Bousquet-M\'elou--Butler
\cite{MR2376109} in terms of pattern avoidance (with a barred pattern).
In turn, it was shown by Woo--Yong \cite{MR2422304} that this acyclicity characterizes the
locally factorial Schubert varieties.
\end{remark}

\subsection{Further remarks}

In the case of a general incidence matrix, rigidity is not preserved by deleting an arbitrary one. For instance, the matrix
\[
 \IM=\begin{pmatrix}
 0&1&1&0\\1&0&0&0\\0&0&0&1\\0&0&1&0
 \end{pmatrix}
\]
is strongly rigid: the greedy procedure succeeds with active set equal to its four cover
edges.  Deleting the entry $(1,3)$ gives
\[
 \IM'=\begin{pmatrix}
 0&1&0&0\\1&0&0&0\\0&0&0&1\\0&0&1&0
 \end{pmatrix}.
\]
The four covers of $\IM'$ form a $4$-cycle, so $\IM'$ is not rigid by
\cref{rem: covers}.

Nonetheless, some operations seem to survive, although we do not know how to prove it.

Let $\IM$ be an incidence matrix. We say that a submatrix $\IM'$ with row set $I'$ and column set $J'$ is \emph{secluded} if there is no one in $\IM$
in position $(i,j)$ where $i\in I'$ and $j\notin J'$ or $i\notin I'$ and $j\in J'$.
In other words, we can form a bipartite graph whose sides are the rows and columns of $\IM$. A secluded submatrix is one whose rows and columns are formed by
taking a union of connected components of this graph.

\begin{conjecture}
Suppose that $\IM$ is rigid and $\IM'$ is a secluded submatrix. Then $\IM'$ is also rigid.
\end{conjecture}
We verified this conjecture empirically in all cases where the number of ones in $\IM$ is $\le10$.

Ultimately, it would be very interesting to give a sharp combinatorial criterion for the rigidity of a general incidence matrix.

\appendix

\section{Appendix: smooth permutations and the results of Lapid--Minguez}\label{sec:appendix}

The purpose of the appendix is to revisit the combinatorial criterion for rigidity for smooth permutations and prove \cref{thm: perm}.
The result is not new. It was proved in \cite{MR3866895}, at least without the ``strong rigidity'' clause.
However, the proof in [ibid.] was intertwined with the representation-theoretic analogue, which was the main thrust of that paper.
So we take the opportunity to give a self-contained, clean proof which is almost entirely in terms of combinatorics of permutations.
Grosso modo, it follows the same line of argument as in [ibid.] but it is simpler in a few technical points.
Some aspects of the proof are familiar from the proof of the singular locus theorem of Schubert varieties \cites{MR1827861,MR1990570,MR1994224,MR2015302,MR1853139}.

First, we recall the symmetries of \cref{lem: symmetry}.
\begin{lemma} \label{lem: symm}
For any $\sigma\in S_n$, the rigidity/strong rigidity of $\IM_\sigma$ is preserved under $\sigma\mapsto\sigma^{-1}$ and $\sigma\mapsto w_0\sigma w_0$.
\end{lemma}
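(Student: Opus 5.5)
The plan is to reduce the statement to \cref{lem: symmetry}, which says that rigidity and strong rigidity of an incidence matrix are preserved under transposition and under rotation by $180^\circ$. So the work is to identify what $\sigma\mapsto\sigma^{-1}$ and $\sigma\mapsto w_0\sigma w_0$ do to the matrix $\IM_\sigma$, the permutation matrix of $w_0\sigma$.

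\textbf{The map $\sigma\mapsto\sigma^{-1}$.} We have $\IM_{\sigma^{-1}}$ equal to the permutation matrix of $w_0\sigma^{-1}$. The transpose of $\IM_\sigma$ is the permutation matrix of $(w_0\sigma)^{-1}=\sigma^{-1}w_0$. Now
\[
\sigma^{-1}w_0=w_0\,(w_0\sigma^{-1}w_0),
\]
so $\IM_\sigma^{t}=\IM_{w_0\sigma^{-1}w_0}$.

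\textbf{The map $\sigma\mapsto w_0\sigma w_0$.} Rotating a permutation matrix by $180^\circ$ reverses both rows and columns. This corresponds to conjugating the permutation by $w_0$, under either convention for permutation matrices. Rotation therefore sends the permutation matrix of $w_0\sigma$ to that of
\[
w_0(w_0\sigma)w_0=w_0(\sigma w_0)\cdot\ldots
\]
More precisely, $w_0(w_0\sigma)w_0=\sigma w_0=w_0(w_0\sigma w_0)$. Hence the rotation of $\IM_\sigma$ is $\IM_{w_0\sigma w_0}$.

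\textbf{Combining the two.} By \cref{lem: symmetry}, rigidity and strong rigidity are invariant under $\sigma\mapsto w_0\sigma w_0$. They are also invariant under $\sigma\mapsto w_0\sigma^{-1}w_0$. Composing these two maps gives invariance under $\sigma\mapsto\sigma^{-1}$.

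\textbf{Main obstacle.} The only real difficulty is bookkeeping of conventions: whether $e_{i,\sigma(i)}$ or $e_{\sigma(i),i}$ defines the permutation matrix. I would check that the conclusion is convention-independent. Transposition always corresponds to inversion. Rotation always corresponds to conjugation by $w_0$, since $w_0=w_0^{-1}$. Left multiplication by $w_0$ commutes appropriately in the identities above in either convention, because both identities used only group manipulations. So the result holds regardless of which convention the paper uses.
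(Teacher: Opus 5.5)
Your proposal is correct and follows the paper's route: the paper states this lemma simply as a recollection of \cref{lem: symmetry}, and your computations $\IM_\sigma^{t}=\IM_{w_0\sigma^{-1}w_0}$ and ($180^\circ$ rotation of $\IM_\sigma$)$\,=\IM_{w_0\sigma w_0}$ supply the identification it leaves implicit, with $\sigma\mapsto\sigma^{-1}$ being the composite, i.e.\ reflection in the anti-diagonal. You should delete the garbled first display in the rotation paragraph; the ``more precisely'' line that follows it is the correct statement.
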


Since both permutations $4231$ and $3412$ are involutions and commute with $w_0$, the smoothness property is also invariant under the symmetries
$\sigma\mapsto\sigma^{-1}$ and $\sigma\mapsto w_0\sigma w_0$. (Of course, this can also be seen from the geometric characterization of smoothness.)

The main technical simplification in the proof here over \cite{MR3866895} is that we take advantage of all symmetries.

\subsection{Main reduction}

The key step for the proof of \cref{thm: perm} is the following result.

\begin{lemma} \label{lem: simpind}
Let $\IM$ be an incidence matrix. Suppose that the first row and the $j_1$-th column each contain a single one,
located at the position $x_1=(1,j_1)$.
Let $\IM'$ be the incidence matrix obtained by removing the first row and the $j_1$-th column.
Then, if $\IM$ is rigid, then $\IM'$ is rigid.
Conversely, suppose that the ones in the window $(i,j):j>j_1$ occur in positions $(i_k,j_k)$, $k=2,\dots,m$
(possibly with $m=1$) such that $i_1<\dots<i_m$ and $j_1<\dots<j_m$, where $(i_1,j_1)=(1,j_1)$. Then,
if $\IM'$ is rigid (resp., strongly rigid) then $\IM$ is rigid (resp., strongly rigid).
\end{lemma}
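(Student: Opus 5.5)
The plan is to realize $\uu'$ (the ideal attached to $\IM'$) as a $G$-equivariant quotient of $\uu$, and then run the greedy procedure of \cref{thm:rigidity} in an adapted order that processes the pairs involving the distinguished one $x_1$ last. Label the ones lexicographically, so $x_1$ has index $1$. Since row $1$ and column $j_1$ contain no other ones, for every other one $i$ we have $a_i>1$ and $b_i\neq j_1$. Hence $1\homr i$ (equivalently $1\extr i$) holds exactly when $i$ lies in the window $W=\{i:b_i>j_1\}$, and $i\homr 1$ holds only for $i=1$.

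\emph{Step 1 (the quotient).} Put $F:=\langle e_{1i}: i\in W\rangle\subset\uu$. Since $g_{a1}=0$ for $a\neq1$, we get $g e_{1i}g^{-1}\in F$, so $F$ is $G$-stable. Moreover, $(k,l)\sqsupseteq(1,i)$ forces $k\homr 1$, i.e.\ $k=1$. Hence every enumeration listing the pairs of $\U'=\U\setminus\{(1,i)\}$ first (adapted for $\IM'$) and then the pairs $(1,i)$ (adapted among themselves) is adapted for $\IM$. The quotient $\uu/F\cong\uu'$ is the conjugation representation of $G'=\Aut(M_{\IM'})$, inflated along the surjection $G\to G'$ that deletes row and column $1$; the coordinate $t_1$ acts trivially. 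The first assertion then follows at once: a dense $G$-orbit in $\uu$ maps onto a dense orbit in $\uu/F$, which is a dense $G'$-orbit in $\uu'$.

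\emph{Step 2 (the converse: first $\#\U'$ steps and the step $(1,j_2)$).} Under the chain hypothesis, $W=\{j_2,\dots,j_m\}$ with $j_l\homr j_k$ iff $l\le k$. So $(1,j_l)\sqsubseteq(1,j_k)$ iff $l\le k$, and we list these pairs in the order $k=2,\dots,m$. The first $\#\U'$ steps coincide with the run for $\IM'$, because the fibre and stabilizer data only see $G'$. By rigidity of $\IM'$ these steps succeed with active set $\Phi'$, a forest on the ones other than $1$. At the step $(1,j_2)$ the step either does not activate, or it activates and adds the edge $\{1,j_2\}$; this edge cannot close a cycle because the vertex $1$ is new. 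Either way there is no abort.

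\emph{Step 3 (the remaining steps $(1,j_k)$, $k\ge3$; the main point).} I claim these never activate, and I would check this with the infinitesimal criterion (\cref{prop: greedy}, legitimate by \cref{eq: autsep}). For $X=e_{1j_l}\in\nn$ and any $L=L(\lambda)$, column $1$ of $L$ vanishes, so
\[
[e_{1j_l},L]=\sum_{c}\lambda_{(j_l,c)}e_{1c}\in F .
\]
Taking $l=k-1$, this equals $\lambda_{(j_{k-1},j_k)}e_{1j_k}$ modulo later basis vectors. Hence the row $(1,j_k)$ is independent of the earlier rows as soon as $\lambda_{(j_{k-1},j_k)}\neq0$. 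The pair $(j_{k-1},j_k)$ is a cover of $\IM'$: an intermediary would have to lie in the window strictly between $j_{k-1}$ and $j_k$ in the chain, and there is none. By \cref{lem: coinvactive} (equivalently \cref{rem: covers}) covers are always active, so $(j_{k-1},j_k)\in\Phi'$ and $\lambda_{(j_{k-1},j_k)}\neq0$ on the support. Thus the procedure succeeds for $\IM$, and $\IM$ is rigid. The delicate points are verifying that $\lambda_{(j_{k-1},j_k)}$ is the leading term and that the earlier rows do not already span this direction; I expect this verification to be the main obstacle.

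\emph{Step 4 (strong rigidity).} If $m=1$ nothing is added and the claim is immediate. Otherwise the active set for $\IM$ is contained in $\Phi'\cup\{(1,j_2)\}$. If $\Phi'$ is the set of covers of $\IM'$, this set consists of covers of $\IM$, because covers of $\IM$ inside $\U'$ are exactly the covers of $\IM'$ (no intermediary can be the one $1$), and $(1,j_2)$ is a cover. Conversely, every cover of $\IM$ is active by \cref{lem: coinvactive}, while $(1,j_k)$ for $k\ge3$ is not a cover (it has the intermediary $j_2$). Hence the active set is exactly the set of covers, and $\IM$ is strongly rigid.
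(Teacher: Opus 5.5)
Your proof is correct, and it departs from the paper's in two places.

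\textbf{First assertion.} The paper writes $M(\lambda)$ in block form $\begin{pmatrix}M_{\IM'}(\lambda)&0\\ E&D\end{pmatrix}$, with the rows of $\U'$ first and then the rows $(1,j_k)$. It concludes from the generic-rank inequality $\#\U=\rk M(\lambda)\le\rk M_{\IM'}(\lambda)+(m-1)$. Your $G$-equivariant quotient $\uu\to\uu/F\cong\uu'$ is the group-level form of the same block structure. It lets you work with orbits directly, without passing through generic ranks.

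\textbf{Rigidity in the converse.} The paper does not run the procedure here. Under the staircase hypothesis, the block $D$ is lower triangular with diagonal $\lambda_{(1,j_2)},\lambda_{(j_2,j_3)},\dots,\lambda_{(j_{m-1},j_m)}$. Hence it is invertible for generic $\lambda$, which gives generic full rank without any information about which coordinates are active.

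Your Steps 2--4 are exactly the paper's argument for the strong-rigidity clause, and you use this single run for both clauses. The cost is that plain rigidity now rests on the cover machinery (\cref{lem: coinvactive} together with \cref{rem: covers}) to guarantee $\lambda_{(j_{k-1},j_k)}\neq0$ at test time. The gain is a uniform argument that also determines the active set in the merely rigid case. Since $(1,j_2)$ is a cover, $\Phi=\Phi'\sqcup\{(1,j_2)\}$ for $m\ge2$, so the one $x_1$ simply joins the component of $j_2$ in the active forest.

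\textbf{The obstacle you flag in Step 3.} It is already settled by your own formula. Since $[e_{1j_{k-1}},L]\in\langle e_{1j_k},\dots,e_{1j_m}\rangle$, every previously processed row vanishes in the column $(1,j_{k-1})$. The row $(1,j_k)$ has the entry $\lambda_{(j_{k-1},j_k)}\neq0$ there. This is precisely the observation the paper makes.
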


\begin{proof}
If $m=1$, the new vertex $x_1$ is isolated in the extension graph, $\U=\U'$, and strong rigidity is
unchanged.
Otherwise, denote by $z_2,\dots,z_m$ the ones in the window, and by $Y$ the remaining ones other than $x_1$
(so that the ones of $Y$ lie in columns $<j_1$ and rows $\ge2$).
By assumption, for every one $u\ne x_1$ we have $u\not\homr x_1$ (this would force $u$ to lie in the first row)
and $x_1\homr u$ if and only if $u$ lies in the window (i.e., $u\not\in Y$; here we use that the $j_1$-th column
contains no other one). In particular $(u,x_1)\notin\U$ for all $u$, and $(x_1,u)\in\U$ if and only if $u$ is in the
window. Since the deletion does not change the relative order of the coordinates of the remaining ones, we get
\[
\U=\U'\sqcup\{(x_1,z_k):k=2,\dots,m\},\qquad
\V=\V'\sqcup\{(x_1,x_1)\}\sqcup\{(x_1,u):x_1\homr u,\ u\ne x_1\},
\]
where $\U'$, $\V'$ pertain to $\IM'$.

The key observation is the following block structure of $M(\lambda)=M_\IM(\lambda)$:
\begin{enumerate}
\item \label{part: blockold} The row of $(u,v)\in\U'$ has zero entries in all columns $(x_1,\cdot)$, and its remaining entries coincide
with those of the corresponding row of $M_{\IM'}(\lambda)$. Indeed, an entry of that row in a column $(u,w)$ with
$w=x_1$ would require $u\homr x_1$, and an entry in a column $(x_1,v)$ would require $(u,x_1)\in\U$; neither occurs.
\item \label{part: blocknew} The row of $(x_1,z_k)$ has the entries $\lambda_{(z_l,z_k)}$ in the columns $(x_1,z_l)$ for those $l$
with $(z_l,z_k)\in\U$, the entries $-\lambda_{(x_1,z_l)}$ in the columns $(z_l,z_k)$ ($l<k$),
and $\pm\lambda_{(x_1,z_k)}$ in the two diagonal columns $(x_1,x_1)$ and $(z_k,z_k)$.
\end{enumerate}
Thus, in the column decomposition of $\V$ into $\V'$ and the columns $(x_1,\cdot)$, the matrix $M(\lambda)$ has the form
$\begin{pmatrix}M_{\IM'}(\lambda)&0\\ E&D\end{pmatrix}$
with the last $m-1$ rows indexed by $(x_1,z_2),\dots,(x_1,z_m)$.

For the first assertion, if $\IM$ is rigid then for generic $\lambda$
\[
\#\U=\rk M(\lambda)\le\rk M_{\IM'}(\lambda)+(m-1),
\]
so that $\rk M_{\IM'}(\lambda)\ge\#\U-(m-1)=\#\U'$, i.e., $\IM'$ is rigid.

Assume now the staircase condition of the second assertion. Then $z_2\extr z_3\extr\cdots\extr z_m$ is a chain
(both coordinates increase along it), so \emph{all} the pairs $(z_l,z_k)$, $l<k$, belong to $\U'$.
Order the columns $(x_1,\cdot)$ as $(x_1,x_1),(x_1,z_2),\dots,(x_1,z_{m-1})$
(the column $(x_1,z_m)$ vanishes identically).
By \cref{part: blocknew} above, the $(m-1)\times(m-1)$-submatrix $D$ of the rows $(x_1,z_2),\dots,(x_1,z_m)$ is lower triangular with
diagonal entries $\lambda_{(x_1,z_2)},\lambda_{(z_2,z_3)},\lambda_{(z_3,z_4)},\dots,\lambda_{(z_{m-1},z_m)}$,
hence invertible for generic $\lambda$. Therefore
$\rk M(\lambda)\ge\rk M_{\IM'}(\lambda)+(m-1)$ for generic $\lambda$, and if $\IM'$ is rigid then
$\rk M(\lambda)=\#\U'+(m-1)=\#\U$, i.e., $\IM$ is rigid.

Finally, suppose that $\IM'$ is strongly rigid. First note that
\[
\text{covers}(\IM)=\text{covers}(\IM')\sqcup\{(x_1,z_2)\}.
\]
Indeed, an element of $\U'$ has the same intermediaries in $\IM$ as in $\IM'$ (a new intermediary would have to
be comparable to $x_1$ from above, which is impossible); $(x_1,z_2)$ has no intermediary (a candidate $y$ would
satisfy $x_1\homr y$, hence lie in the window, and $y\extr z_2$ or $y\homr z_2$ with $x_1\extr y$, which fails for
$y=z_l$, $l\ge2$); and $(x_1,z_k)$, $k\ge3$, has the intermediary $z_{k-1}$. For the same reason, the consecutive
pairs $(z_{k-1},z_k)$ are covers of $\IM'$: an intermediary $y$ would satisfy $z_{k-1}\homr y$ or $z_{k-1}\extr y$,
hence lie in the window strictly between $z_{k-1}$ and $z_k$ in the chain, and there is no such one.

Now run the procedure of \cref{thm:rigidity} for $\IM$ with an adapted enumeration consisting of an adapted
enumeration of $\U'$ followed by $(x_1,z_2),\dots,(x_1,z_m)$. (This is adapted: no element of $\U'$ contains any
$(x_1,z_k)$, since this would require its first index $u$ to satisfy $u\homr x_1$.) By \cref{part: blockold} of the block structure,
the first $\#\U'$ steps replicate the run for $\IM'$, which succeeds with
$\Phi'=\text{covers}(\IM')$. The step $(x_1,z_2)$: its row vanishes at test time (all its potential parameters are
inactive or equal to $\lambda_{(x_1,z_2)}$ itself), so the step activates; the edge $\{x_1,z_2\}$ involves the new
vertex $x_1$ and creates no cycle. The step $(x_1,z_k)$, $k\ge3$: since $(z_{k-1},z_k)\in\Phi'$, the row has the
nonzero entry $\lambda_{(z_{k-1},z_k)}=1$ in the column $(x_1,z_{k-1})$, whereas all previously processed rows vanish
in that column (the rows of $\U'$ by \cref{part: blockold}, and the rows $(x_1,z_l)$, $l<k$, because their entries in the columns
$(x_1,z_j)$ occur only for $j<l\le k-1$). Hence the row is independent from the previous ones and the step does not
activate. The procedure therefore succeeds with $\Phi=\Phi'\sqcup\{(x_1,z_2)\}=\text{covers}(\IM)$, so $\IM$ is
strongly rigid.
\end{proof}

\begin{remark}
The first assertion holds, with the same proof, without any assumption on the $j_1$-th column, provided $\IM'$ is
interpreted as $\IM$ with the entry $x_1$ (and the emptied first row) removed. We will not need this.
\end{remark}

\subsection{Smooth implies strongly rigid}

It is easy to deduce from the second part of \cref{lem: simpind} that $\IM_\sigma$ is strongly rigid for every smooth $\sigma$ by induction on $n$.
The induction step is based on the following observation.

\begin{lemma} \label{lem: red1row}
Suppose that $\sigma\in S_n$ is smooth. Then, the following condition is satisfied for $\tau=\sigma$ or $\tau=\sigma^{-1}$ (or both):
for any $i<j$ such that $\tau(i)<\tau(1)$ and $\tau(j)<\tau(1)$ we have $\tau(j)<\tau(i)$.
\end{lemma}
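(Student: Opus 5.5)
We argue by contradiction. Assume that the condition fails for both $\tau=\sigma$ and $\tau=\sigma^{-1}$, and we will exhibit a $4231$ or $3412$ pattern in $\sigma$, i.e.\ positions $x<y<z<w$ whose values are in the relative order $4,2,3,1$ or $3,4,1,2$. (In terms of $\IM_\sigma$ these are exactly the forbidden submatrices. It is convenient to work directly with the one-line notation of $\sigma$.)

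\emph{Translating the two failures.} Put $c=\sigma(1)$ and $p=\sigma^{-1}(1)$. If $c=1$, the condition holds trivially for $\tau=\sigma$, so we may assume $c>1$, and then also $p>1$.

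Failure for $\tau=\sigma$ means that there are positions $1<i<j$ with $\sigma(i)<\sigma(j)<c$. This is an ascent among the values below $c$ occurring after position $1$.

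Failure for $\tau=\sigma^{-1}$ means that there are $1<a<b$ with $\sigma^{-1}(a)<\sigma^{-1}(b)<\sigma^{-1}(1)=p$. Writing $q_a=\sigma^{-1}(a)$ and $q_b=\sigma^{-1}(b)$, we get positions $q_a<q_b<p$ carrying the ascending values $a<b$, both lying before the position of the value $1$.

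\emph{Case B: every ascent of the first kind lies before $p$.} That is, there is no $j>p$ with $1<\sigma(j)<c$. Then the ascent $i<j$ from the first failure satisfies $j<p$, since $j=p$ is impossible because $\sigma(p)=1$ is minimal. The positions $1<i<j<p$ carry the values $c,\sigma(i),\sigma(j),1$. Here $c$ is the largest, $1$ is the smallest, and the middle two ascend. This is a $4231$ pattern.

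\emph{Case A: there is $j>p$ with $1<d:=\sigma(j)<c$.} We split according to the values at positions $2,\dots,p-1$.
\begin{enumerate}
\item Suppose some position $1<y<p$ has $\sigma(y)>c$. Then the positions $1<y<p<j$ carry the values $c,\sigma(y),1,d$. Since $1<d<c<\sigma(y)$, this is a $3412$ pattern.
\item Otherwise every value at positions $2,\dots,p-1$ is less than $c$. Now use the second failure. If $q_a=1$, then $a=c$ and $b>c$ sits at a position in $(1,p)$, which is excluded. Hence $1<q_a<q_b<p$ with $a<b<c$. The positions $1<q_a<q_b<p$ then carry $c,a,b,1$, which is a $4231$ pattern.
\end{enumerate}

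In every case $\sigma$ contains $4231$ or $3412$, contradicting smoothness. Hence the condition holds for $\tau=\sigma$ or $\tau=\sigma^{-1}$.

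The main point requiring care is the bookkeeping in translating the failure for $\sigma^{-1}$ into a statement about positions of $\sigma$. This is where the key hypothesis $q_a<q_b<p$ comes from. The only delicate detail is excluding $q_a=1$ in case A2, which we did using the assumption that all values at positions $2,\dots,p-1$ are below $c$.
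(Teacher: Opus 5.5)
Your proof is correct and is essentially the paper's argument: assuming both failures, you exhibit a $4231$ at positions $(1,i,j,p)$, a $3412$ at positions $(1,y,p,j)$, or a $4231$ at positions $(1,q_a,q_b,p)$, which are exactly the three patterns the paper uses. The only difference is in how the cases are split. You ask whether some value in $(1,c)$ occurs after position $p$, and whether some value exceeding $c$ occurs strictly between positions $1$ and $p$. The paper instead compares $p$ with the specific $j$ from the first failure, and then compares $\sigma(q_b)$ with $\sigma(1)$; this difference is immaterial.
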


\begin{proof}
Suppose on the contrary that the condition fails for both $\tau=\sigma$ and $\tau=\sigma^{-1}$. The failure for
$\sigma$ means that there exist $1<i<j$ with $\sigma(i)<\sigma(j)<\sigma(1)$. The failure for $\sigma^{-1}$,
read in terms of positions of values, means that there exist $p<q<w:=\sigma^{-1}(1)$ with $\sigma(p)<\sigma(q)$.
Note that $w\ne j$ since $\sigma(j)>\sigma(i)\ge1$. We exhibit a forbidden pattern in each of the following
exhaustive cases, contradicting the smoothness of $\sigma$.
\begin{enumerate}
\item If $w>j$, then the positions $(1,i,j,w)$, with values $(\sigma(1),\sigma(i),\sigma(j),1)$, form an occurrence
of $4231$.
\item If $w<j$ and $\sigma(q)>\sigma(1)$ (so that $q>1$), then the positions $(1,q,w,j)$, with values
$(\sigma(1),\sigma(q),1,\sigma(j))$, form an occurrence of $3412$.
\item If $w<j$ and $\sigma(q)<\sigma(1)$, then $p>1$ (since $\sigma(p)<\sigma(q)$) and the positions $(1,p,q,w)$,
with values $(\sigma(1),\sigma(p),\sigma(q),1)$, form an occurrence of $4231$.\qedhere
\end{enumerate}
\end{proof}

Denote by $\partial\sigma$ the permutation in $S_{n-1}$ obtained from $\sigma$ by removing $(1,\sigma(1))$ and keeping the order,
so that $\IM_{\partial\sigma}$ is obtained from $\IM_\sigma$ by deleting the first row and the $(w_0\sigma(1))$-th column.
Clearly, if $\sigma$ is smooth, then $\partial\sigma$ is also smooth.

\begin{proposition} \label{prop: smoothstrong}
If $\sigma$ is smooth, then $\IM_\sigma$ is strongly rigid.
\end{proposition}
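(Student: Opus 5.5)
We argue by induction on $n$, combining \cref{lem: red1row}, the symmetries of \cref{lem: symm}, and the second (staircase) part of \cref{lem: simpind}. For $n=1$ the matrix $\IM_\sigma$ is $1\times1$, so $\U=\emptyset$, and $\IM_\sigma$ is trivially strongly rigid (the space $\uu$ is zero and there are no covers). Assume $n>1$ and that the statement holds for all smooth permutations in $S_{n-1}$.

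\emph{Reduction by symmetry.} Let $\sigma\in S_n$ be smooth. By \cref{lem: red1row}, the stated condition holds for $\tau=\sigma$ or for $\tau=\sigma^{-1}$. The permutation $\sigma^{-1}$ is again smooth, and by \cref{lem: symm} the strong rigidity of $\IM_\sigma$ and of $\IM_{\sigma^{-1}}$ are equivalent. Hence we may replace $\sigma$ by $\tau$ and assume that the condition holds for $\sigma$ itself: whenever $i<j$ and $\sigma(i),\sigma(j)<\sigma(1)$, we have $\sigma(j)<\sigma(i)$.

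\emph{Applying \cref{lem: simpind}.} The matrix $\IM_\sigma$ is the permutation matrix of $w_0\sigma$, so its ones lie at the positions $(i,n+1-\sigma(i))$. Since it is a permutation matrix, the first row and the column $j_1:=n+1-\sigma(1)$ each contain a single one, namely at $x_1=(1,j_1)$.

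\begin{enumerate}
\item \emph{Window.} A one $(i,n+1-\sigma(i))$ lies in the window $j>j_1$ exactly when $\sigma(i)<\sigma(1)$.
\item \emph{Staircase condition.} For two such ones with $i<i'$, the column indices increase, $n+1-\sigma(i)<n+1-\sigma(i')$, precisely when $\sigma(i')<\sigma(i)$. This is exactly the condition we arranged above. Listing the window ones by increasing row index therefore yields positions $(i_k,j_k)$, $k=2,\dots,m$, with both coordinates increasing. Moreover every $i_k>1$ and every $j_k>j_1$, so prepending $(i_1,j_1)=(1,j_1)$ preserves the chain.
\end{enumerate}

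Deleting the first row and the $j_1$-th column gives $\IM'=\IM_{\partial\sigma}$. Since $\partial\sigma$ is smooth, $\IM'$ is strongly rigid by the induction hypothesis. The strong-rigidity clause of the second assertion of \cref{lem: simpind} then shows that $\IM_\sigma$ is strongly rigid.

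\emph{Expected obstacle.} The only delicate point is the bookkeeping with $w_0$. One must check that the ``window'' and ``staircase'' conditions of \cref{lem: simpind} translate exactly into the inequality of \cref{lem: red1row} for $\tau=\sigma$, and not into its mirror image. One must also check that passing to $\sigma^{-1}$ is covered by \cref{lem: symm}, rather than requiring a transpose together with a rotation. The computation above with the positions $(i,n+1-\sigma(i))$ settles both points. The identification $\IM'=\IM_{\partial\sigma}$ holds directly by the definition of $\partial\sigma$.
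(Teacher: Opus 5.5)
Your proof is correct and follows the paper's argument essentially step for step: induction on $n$, reduction to $\tau=\sigma$ via \cref{lem: red1row} and \cref{lem: symm}, then reading the window and staircase hypotheses of \cref{lem: simpind} off the positions $(i,n+1-\sigma(i))$, and finally applying the induction hypothesis to $\IM_{\partial\sigma}$. (The second half of your ``expected obstacle'' needs no computation, since \cref{lem: symm} is already stated directly for $\sigma\mapsto\sigma^{-1}$.)
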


\begin{proof}
By induction on $n$, the case $n=1$ being trivial. Let $\sigma\in S_n$ be smooth, $n\ge2$, and let $\tau\in\{\sigma,\sigma^{-1}\}$
be as in \cref{lem: red1row}. By \cref{lem: symm} we may replace $\sigma$ by $\tau$ and assume $\tau=\sigma$.
The matrix $\IM_\sigma$ is a permutation matrix, so the hypotheses of \cref{lem: simpind} on the first row
and the $j_1$-th column ($j_1=w_0\sigma(1)$) hold. The ones in the window are the $(i,w_0\sigma(i))$ with $i\ge2$ and
$\sigma(i)<\sigma(1)$, and the condition of \cref{lem: red1row} for $\tau=\sigma$ says precisely that their columns
increase along with their rows, i.e., the staircase hypothesis of \cref{lem: simpind} holds.
Since $\partial\sigma$ is smooth, $\IM_{\partial\sigma}=\IM_\sigma'$ is strongly rigid by the induction hypothesis,
and \cref{lem: simpind} gives the claim.
\end{proof}

For smooth $\sigma$ we can also describe the partition of the ones of $\IM_\sigma$ into the connected components
of the graph formed by the covers (which, by strong rigidity, is the partition $P(\Phi)$).
Recall that $\sigma\in S_n$ is \emph{decomposable} if $\sigma(\{1,\dots,k\})=\{1,\dots,k\}$ for some $0<k<n$;
correspondingly, $\sigma$ is uniquely a direct sum $\sigma=\sigma_1\oplus\dots\oplus\sigma_r$ of indecomposable
permutations. The resulting partition of $\{1,\dots,n\}$ into consecutive intervals induces a partition of the ones
of $\IM_\sigma$, whose blocks occupy anti-diagonal blocks of $\IM_\sigma$.

\begin{proposition} \label{prop: covcomp}
Let $\sigma$ be smooth. Then the connected components of the cover graph of $\IM_\sigma$ are exactly the blocks
corresponding to the indecomposable summands of $\sigma$.
\end{proposition}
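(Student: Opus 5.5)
Write the ones of $\IM_\sigma$ as $p_i=(i,w_0\sigma(i))$, $i=1,\dots,n$. Since $w_0$ reverses order, the explicit description of $\homr$ and $\extr$ for incidence matrices becomes
\[
p_i\homr p_j\iff i\le j\text{ and }\sigma(i)\ge\sigma(j),\qquad p_i\extr p_j\iff i<j\text{ and }\sigma(i)>\sigma(j).
\]
So $\U$ is exactly the set of inversions of $\sigma$. For distinct ones, $p_i\homr p_z$ and $p_i\extr p_z$ coincide, so the intermediary condition of \cref{rem: covers} reads as follows: $(i,j)$ is a cover if and only if it is an inversion with no $z$ satisfying $i<z<j$ and $\sigma(i)>\sigma(z)>\sigma(j)$.

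The plan is to compare the cover graph with the full inversion graph on $\{1,\dots,n\}$, whose edges are all pairs in $\U$, and to show they have the same connected components. As far as I can see, smoothness is not actually needed; it only enters through the identification with $P(\Phi)$ mentioned before the statement. Every cover is an inversion, so each cover component lies inside an inversion component. Conversely, I claim the endpoints of any inversion $(i,j)$ are joined by a path of covers, by induction on $j-i$. If $(i,j)$ is not a cover, pick an intermediary $z$. Then $(i,z)$ and $(z,j)$ are inversions of strictly smaller span, so by induction they are joined by cover paths, and concatenating them connects $i$ to $j$. Hence the two graphs have the same components.

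It remains to identify the components of the inversion graph with the indecomposable blocks.

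\textbf{Blocks are unions of components.} No inversion joins two different blocks. If $i$ lies in an earlier block than $j$, then $i<j$, and since $\sigma$ preserves each initial segment $\{1,\dots,k\}$ at a block boundary, also $\sigma(i)<\sigma(j)$.

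\textbf{Each block is connected.} The key observation is that every component of the inversion graph is an interval. Suppose $i<c<j$ with $\sigma(i)>\sigma(j)$. Either $\sigma(c)<\sigma(i)$, and then $(i,c)$ is an inversion, or $\sigma(c)>\sigma(i)>\sigma(j)$, and then $(c,j)$ is an inversion. In both cases $c$ lies in the component of $i$ and $j$. Taking the minimal and maximal elements of a component and a path between them, every intermediate position is absorbed, so the component is an interval.

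Now order the components $I_1<I_2<\dots<I_s$ as consecutive intervals. There are no inversions between different $I$'s, so every value of $\sigma$ on $I_1$ is smaller than every value on $I_2$, and so on. Hence $\sigma$ maps $I_1\cup\dots\cup I_t$ onto $\{1,\dots,|I_1|+\dots+|I_t|\}$ for each $t$. This means $\sigma=\sigma|_{I_1}\oplus\dots\oplus\sigma|_{I_s}$, so the component partition refines the block partition. Combined with the first point, the two partitions coincide.

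The step requiring the most care is the interval property of inversion components. The rest is bookkeeping: translating $\homr$ and $\extr$ through $w_0$ and checking the cover characterization against \cref{rem: covers}. Finally, since $\IM_\sigma$ is strongly rigid by \cref{prop: smoothstrong}, the active set $\Phi$ is exactly the set of covers, and the partition obtained is $P(\Phi)$, as noted before the statement.
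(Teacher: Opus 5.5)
Your proof is correct, and it takes a genuinely different and more general route than the paper. The paper also starts by noting that the cover graph splits along the summands. It then proves connectivity for smooth indecomposable $\sigma$ by induction on $n$:
\begin{enumerate}
\item it uses \cref{lem: symm} to arrange the staircase condition of \cref{lem: red1row};
\item it invokes the recursion $\text{covers}(\IM_\sigma)=\text{covers}(\IM_{\partial\sigma})\sqcup\{(x_1,z_2)\}$ from the proof of \cref{lem: simpind};
\item it checks by a short case analysis that $\partial\sigma$ stays indecomposable.
\end{enumerate}
You instead observe that covers and inversions have the same connected components, since a non-cover inversion splits at an intermediary into two inversions of smaller span. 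You then use the classical fact that the components of the inversion graph are intervals, and hence are exactly the direct-sum blocks. Your argument is shorter and self-contained, and it shows the statement for every permutation, smooth or not. Smoothness enters only when identifying the cover graph with the active forest, i.e.\ with $P(\Phi)$. The paper's induction, in exchange, shows the cover graph of a smooth indecomposable $\sigma$ being grown by successively attaching leaves. This fits the recursive structure of the appendix but is not needed for the statement.

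One slip to fix concerns the decomposition $\sigma=\sigma|_{I_1}\oplus\dots\oplus\sigma|_{I_s}$. It shows that each component is a union of blocks, i.e.\ the \emph{block} partition refines the \emph{component} partition, not the reverse. The reverse direction is what your first point already gave. The justification is that every boundary between consecutive $I_t$ is a cut point $k$ with $\sigma(\{1,\dots,k\})=\{1,\dots,k\}$, while the indecomposable blocks are exactly the intervals between consecutive cut points. With the direction corrected, the two refinements give equality.
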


\begin{proof}
First, for any $\sigma$, two ones of $\IM_\sigma$ belonging to different summands are incomparable: if the
positions $i<j$ belong to different summands then $\sigma(i)<\sigma(j)$, so the rows of the two ones increase while
their columns decrease. Consequently, $\U$ is the disjoint union of the sets $\U$ of the matrices
$\IM_{\sigma_1},\dots,\IM_{\sigma_r}$, an intermediary of a pair of ones in one summand lies in the same summand
(being comparable to a one of it), and the cover graph of $\IM_\sigma$ is the disjoint union of the cover graphs of
the $\IM_{\sigma_i}$. In particular, the components refine the blocks, and it remains to show that the cover graph
of $\IM_\sigma$ is connected whenever $\sigma$ is smooth and indecomposable.

We argue by induction on $n$, the case $n=1$ being trivial. As in the proof of \cref{prop: smoothstrong}, using \cref{lem: symm} (which preserves the cover graph, while
$\sigma\mapsto\sigma^{-1}$ preserves decomposability) we may assume that the condition of \cref{lem: red1row}
holds for $\tau=\sigma$. Since $\sigma$ is indecomposable, $t:=\sigma(1)>1$, so the window of $x_1=(1,n+1-t)$ is
nonempty; let $z_2$ be its one with the smallest row index. As was shown in the course of the proof of \cref{lem: simpind},
\[
\text{covers}(\IM_\sigma)=\text{covers}(\IM_{\partial\sigma})\sqcup\{(x_1,z_2)\}.
\]
Thus, the cover graph of $\IM_\sigma$ is obtained from that of $\IM_{\partial\sigma}$ by attaching the vertex $x_1$
along the edge $\{x_1,z_2\}$, and by the induction hypothesis it suffices to show that $\partial\sigma$ is
indecomposable.

Suppose on the contrary that the first $k$ entries of $\partial\sigma$, namely $\sigma(2),\dots,\sigma(k+1)$ with
$1\le k\le n-2$, form the set of the $k$ smallest elements of $\{1,\dots,n\}\setminus\{t\}$. If $t\le k$, this set
is $\{1,\dots,k+1\}\setminus\{t\}$, so $\sigma(\{1,\dots,k+1\})=\{1,\dots,k+1\}$ with $k+1<n$, contradicting the
indecomposability of $\sigma$. Hence $t>k$ and $\{\sigma(2),\dots,\sigma(k+1)\}=\{1,\dots,k\}$. These values are
smaller than $t$, so by the condition of \cref{lem: red1row} they appear in decreasing order, i.e.,
$(\sigma(2),\dots,\sigma(k+1))=(k,k-1,\dots,1)$. Moreover, there is no value $v$ with $k<v<t$: such $v$ would occur
after the position $k+1$ (the earlier positions being taken), and then the values smaller than $t$ would read
$(k,\dots,1,\dots,v,\dots)$, again contradicting \cref{lem: red1row}. Therefore $t=k+1$ and
$\sigma(\{1,\dots,k+1\})=\{1,\dots,k+1\}$ with $k+1<n$, contradicting the indecomposability of $\sigma$ once more.
\end{proof}

\begin{remark}
Combining \cref{prop: covcomp} with strong rigidity ($\Phi=\text{covers}$) and the last part of
\cref{thm:quiver}, we conclude that for smooth $\sigma$ the decomposition of a generic point into
$\Pi$-indecomposables matches the decomposition of $\sigma$ into indecomposable permutations.
A closely related statement was proved in \cite{MR4847251}*{Proposition 6.7} but the argument here is more conceptual.
\end{remark}

\subsection{Minimal violators}
For the proof of \cref{thm: perm}, it remains to show that if $\sigma$ is not smooth, then $\IM_\sigma$ is not rigid.
If we knew \emph{a priori} that rigidity is closed under deleting an entry of a permutation, then we would only need to show the non-rigidity
for the permutations $3412$ and $4231$ themselves.
Of course, this hereditary property would ultimately follow from \cref{thm: perm}, but we do not know how to prove it in general without assuming
\cref{thm: perm}.

Fortunately, the first (easier) part of \cref{lem: simpind} handles a special case of this hereditary property and this already provides a substantial reduction of the problem.
Namely, the Lemma implies that if $\IM_{\partial\sigma}$ is non-rigid, then $\IM_\sigma$ is also non-rigid.
Together with the symmetries of \cref{lem: symm} we can weaken the protasis and thereby strengthen the statement to: if $\IM_{\partial\tau}$ is non-rigid for
$\tau=\sigma$, $\sigma^{-1}$, $w_0\sigma w_0$ or $w_0\sigma^{-1}w_0$, then $\IM_\sigma$ is non-rigid.
Note that these four operations amount to deleting from $\sigma$ (and renumbering) the first entry, the last entry, the entry
of value $1$, or the entry of value $n$, which corresponds to deleting from $\IM_\sigma$ the first row, the last row, the
last column, or the first column, respectively (together with the row or column of the unique one contained in it).

This motivates the following definition: we say that $\sigma$ is a \emph{minimal violator} if $\sigma$ is not smooth but the four permutations
$\partial\tau$, $\tau=\sigma,\sigma^{-1},w_0\sigma w_0,w_0\sigma^{-1}w_0$ (in $S_{n-1}$) are all smooth.
Note that unlike in the previous implication, it is advantageous here to use all four symmetries.

By the above,
\begin{equation} \label{quote: minviolsuffice}
\begin{aligned}
&\text{in order to prove $\IM_\sigma$ is not rigid for any non-smooth $\sigma$ (thereby completing}\\
&\text{the proof of \cref{thm: perm}) it is enough to do so for the minimal violators.}
\end{aligned}
\end{equation}

Fortunately, it is possible to characterize the minimal violators rather explicitly.

Given a permutation $\sigma\in S_m$ and integers $k_1,\dots,k_m\ge1$, the \emph{thickening} $\tilde\sigma$ of $\sigma$ with parameters $k_1,\dots,k_m$
is the permutation of size $m+\sum(k_i-1)$ such that $\IM_{\tilde\sigma}$ is obtained from $\IM_\sigma$ by replacing the entry in $i$-th row by the identity matrix of size $k_i$.
For instance, the thickening of $45312$ with parameters $(1,1,2,1,1)$ is $564312$:
\[
\IM_{45312}=
\begin{pmatrix}0&1&0&0&0\\1&0&0&0&0\\0&0&1&0&0\\0&0&0&0&1\\0&0&0&1&0\end{pmatrix}
\rightsquigarrow
\begin{pmatrix}0&1&0&0&0&0\\1&0&0&0&0&0\\0&0&1&0&0&0\\0&0&0&1&0&0\\0&0&0&0&0&1\\0&0&0&0&1&0\end{pmatrix}
=\IM_{564312},
\]
while the thickenings of $(m,2,\dots,m-1,1)$ with $k_1=k_m=1$ are the permutations
\begin{equation} \label{eq: famI}
(k,\ D_1,\ D_2,\ \dots,\ D_t,\ 1)\in S_k,\qquad t\ge1,
\end{equation}
whose middle part is the partition of $\{2,\dots,k-1\}$ into $t$ nonempty \emph{decreasing} runs of consecutive
integers $D_1=(a_1,a_1-1,\dots,2)$, $D_2=(a_2,\dots,a_1+1)$, \dots, $D_t=(k-1,\dots,a_{t-1}+1)$,
arranged in increasing order of their values.

\begin{lemma} \label{lem: minviol}
The minimal violators are precisely the following permutations.
\begin{enumerate}
\item \label{part: mvI} The thickenings, with $k_1=k_m=1$, of $(m,2,\dots,m-1,1)$, $m\ge4$, i.e., the permutations \cref{eq: famI} with $t\ge2$ runs.
\item \label{part: mvII} The permutations
\[
\tau_{r,s}:=(r+1,\,r-1,r-2,\dots,2,\,k,\,1,\,k-1,k-2,\dots,r+2,\,r)\in S_k,\quad k=r+s,\ r,s\ge2
\]
(for $r=s=2$ this is $3412$).
\item \label{part: mvIII} The permutations
\[
\upsilon_r:=(r+1,\ r+2,\ r,r-1,\dots,3,\ 1,\ 2)\in S_{r+2},\qquad r\ge3,
\]
i.e., the thickenings of $45312=\upsilon_3$ in which only the middle entry is thickened.
\end{enumerate}
\end{lemma}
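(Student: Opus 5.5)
We prove \cref{lem: minviol} by purely combinatorial pattern analysis of permutations, in two directions. The four deletion operations $\partial\tau$, $\tau\in\{\sigma,\sigma^{-1},w_0\sigma w_0,w_0\sigma^{-1}w_0\}$, remove respectively the first entry, the entry of value $1$, the last entry, and the entry of value $n$. Hence $\sigma$ is a minimal violator if and only if $\sigma$ contains $4231$ or $3412$, and every occurrence of these patterns, in every way it can be realized, must use each of the four ``extremal'' entries. These are the first position, the last position, the value $1$ and the value $n$. Indeed, deleting an extremal entry not used by some occurrence leaves a non-smooth permutation.

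\emph{Easy direction.} For each listed family we exhibit an occurrence. For \cref{part: mvI}, take $k,a_1,a_1+1,1$: since $a_1\ge2$ lies in $D_1$ and $a_1+1\in D_2$ comes later, this is a $4231$. For $\tau_{r,s}$, take $r+1,k,1,r$ as a $3412$. For $\upsilon_r$, take $r+1,r+2,1,2$ as a $3412$. We then check that each of the four deletions yields a smooth permutation.

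After deleting an extremal entry, the result is again of an explicit shape. For example, in family \cref{part: mvI}, deleting $k$ or $1$ leaves a permutation whose remaining descents are layered, and such permutations avoid both patterns. For $\tau_{r,s}$, deleting $r+1$ gives a permutation of the form (decreasing run, $k$, $1$, decreasing run). Avoidance can be checked by noting that any $3412$ or $4231$ would need two ``large'' and two ``small'' entries arranged in a way that the layered structure forbids. The symmetries reduce the work: family \cref{part: mvI} and $\upsilon_r$ are closed under $\sigma\mapsto w_0\sigma^{-1}w_0$ (up to reparametrizing the runs), and $\tau_{r,s}$ maps to $\tau_{s,r}$. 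So only about two deletions per family need explicit checking.

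\emph{Hard direction (the main obstacle).} Let $\sigma$ be a minimal violator. First, $n$ is small relative to the pattern. Every extremal entry lies in every occurrence, so an occurrence of $4231$ must have its ``4'' at position $1$ or of value $n$, and so on. We split into two cases.

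\textbf{Case 1: $\sigma$ contains $4231$ but not $3412$.} An occurrence using all four extremal entries must be $\sigma(1)=n$ and $\sigma(n)=1$: the $4$ is both first and maximal, and the $1$ is both last and minimal, because the roles are forced by the position/value ordering. Then $\sigma=(n,\pi,1)$ where $\pi$ is a permutation of $\{2,\dots,n-1\}$. Here $\sigma$ is non-smooth if and only if $\pi$ has an ascent, and deleting $n$ or $1$ gives smooth permutations. We then show that $\pi$ must avoid $21$-then-larger configurations of a certain type, forcing $\pi$ to be a concatenation of increasing-order decreasing consecutive runs with $t\ge2$. The argument is that any other $\pi$ contains $3412$, or a $4231$ inside $(\pi,1)$ or $(n,\pi)$.

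\textbf{Case 2: $\sigma$ contains $3412$.} We use the same requirement that every occurrence uses all four extremal entries, and split on whether $\sigma(1)=n$. If $\sigma(1)\ne n$, the $3$ sits at position $1$ and the $4$ has value $n$; likewise the $1$ and the $2$ at the end. Tracking the entries between these forces either $\tau_{r,s}$ (when $1$ is adjacent to $n$) or $\upsilon_r$ (when $n$ immediately follows position $1$ and the last two entries are $1,2$). Each stray entry would create an occurrence avoiding an extremal entry, or a $4231$ surviving some deletion.

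This case bookkeeping is where errors creep in. I would organize it by the relative order of positions of $n$ and $1$, and then use the symmetries of \cref{lem: symm} to halve the number of subcases.
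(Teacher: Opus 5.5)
Your framework matches the paper's. A permutation $\sigma$ is a minimal violator iff it is non-smooth and every occurrence of $4231$ or $3412$ contains the first entry, the last entry, and the entries of values $1$ and $n$. Your role-forcing observation is a clean substitute for the paper's Step~1: a $4231$ forces $\sigma(1)=n$ and $\sigma(n)=1$, while a $3412$ must be $(\sigma(1),n,1,\sigma(n))$ at positions $1<\sigma^{-1}(n)<\sigma^{-1}(1)<n$ with $\sigma(n)<\sigma(1)$. Your Case~1 is the paper's Step~2, with two corrections:
\begin{itemize}
\item The precise condition is that the middle word avoids $231$ and $312$. A $231$ together with the leading $n$ gives a $4231$ missing the last entry; a $312$ together with the trailing $1$ gives one missing the first entry. ``21-then-larger'' is too loose, since $213$ is itself layered.
\item You still need the fact that the words avoiding both $231$ and $312$ are exactly the layered ones. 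The paper proves this (in complemented form).
\end{itemize}
Also, the remark that ``$n$ is small relative to the pattern'' is false: all three families are unbounded.

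The genuine gap is Case~2. This is where essentially all the content of the lemma lies (the paper's Step~3), and you only assert the outcome. Your proposed organization also misses the real case distinction. The splits on whether $\sigma(1)=n$ and on the relative order of the positions of $n$ and $1$ are both vacuous, because the pinned occurrence already forces $\sigma(1)\neq n$ and $\sigma^{-1}(n)<\sigma^{-1}(1)$.

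Here is what must be proved. Put $b=\sigma^{-1}(n)$ and $a=\sigma^{-1}(1)$, and let $P,Q,R$ be the sets of positions in $(1,b)$, $(b,a)$, $(a,n)$.
\begin{enumerate}
\item[(i)] $\sigma<\sigma(n)$ on $P$, $\sigma>\sigma(1)$ on $R$, and $\sigma(n)<\sigma<\sigma(1)$ on $Q$. Otherwise one of $(\sigma(i),n,1,\sigma(n))$ for $i\in P$, $(\sigma(1),n,1,\sigma(j))$ for $j\in R$, or $(\sigma(1),n,\sigma(j),\sigma(n))$, $(\sigma(1),\sigma(j),1,\sigma(n))$ for $j\in Q$ is a $3412$ missing a border entry.
\item[(ii)] $\sigma$ is decreasing on each of $P$, $Q$, $R$. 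An ascent would give the $4231$ $(\sigma(1),\sigma(i),\sigma(j),1)$ (for $P$ and $Q$) or $(n,\sigma(i),\sigma(j),\sigma(n))$ (for $R$).
\item[(iii)] $Q\neq\emptyset$ forces $P=R=\emptyset$. This uses the same two $4231$'s, with $i\in P$, $j\in Q$, respectively $i\in Q$, $j\in R$.
\end{enumerate}
Only then does counting values yield your dichotomy. If $Q=\emptyset$, then $\sigma(1)=\sigma(n)+1$ and $\sigma=\tau_{\sigma(n),\,n-\sigma(n)}$. If $Q\neq\emptyset$, then $\sigma(n)=2$, $\sigma(1)=n-1$ and $\sigma=\upsilon_{n-2}$.

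Step (iii) is the idea your sketch is missing. Without it, nothing rules out shapes with entries both before $n$ and between $n$ and $1$, so ``1 adjacent to $n$'' versus ``$n$ at position $2$ and ending $1,2$'' is not yet shown to be exhaustive.
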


\begin{remark}
The permutations in \cref{part: mvII,part: mvIII} and the two-run case of \labelcref{part: mvI} are exactly the permutations governing the generic
singularities of Schubert varieties of type $A$, determined independently in \cites{MR1990570,MR1994224,MR2015302,MR1853139}
(they are denoted $\tau^{(2)}_{r,s}$, $\tau^{(3)}_{r,2}$, $\tau^{(1)}_{r,s}$ in \cite{MR3866895}*{\S5.2}).
In contrast, family \labelcref{part: mvI} contains all multi-run thickenings, not only the two-run ones.
Note also that the list is not the one guessed at first sight: for instance, the ``two-sided'' thickenings of $3412$
and $45312$ are non-smooth but are not minimal violators, as suitable corner deletions keep them non-smooth.
\end{remark}

\begin{proof}[Proof of \cref{lem: minviol}]
The proof is routine, if tedious.

It is convenient to argue in terms of the matrix $\IM=\IM_\sigma$, whose ones are $(i,\pi(i))$, $i=1,\dots,n$,
where $\pi=w_0\sigma$; thus $\sigma$ is smooth if and only if $\IM$ contains no occurrence of (the permutation
matrices of) $1324$ or $2143$, and in the list of the lemma the three families read, in terms of $\pi$:
\begin{align}
&\pi=(1,\ B_1,\dots,B_t,\ n),\label{eq: famIpi}\\
&\pi=(s,\ s+2,s+3,\dots,s+r-1,\ 1,\ n,\ 2,3,\dots,s-1,\ s+1),\label{eq: famIIpi}\\
&\pi=(2,\ 1,\ 3,4,\dots,n-2,\ n,\ n-1),\label{eq: famIIIpi}
\end{align}
where in \cref{eq: famIpi} the middle part is the partition of $\{2,\dots,n-1\}$ into $t$ \emph{increasing} runs of
consecutive integers arranged in decreasing order of their values (the three equations corresponding to the three
families of the lemma, in order).
Call the ones of $\IM$ in the first row, the last row, the first column and the last column the \emph{border} ones;
they are $f=(1,u)$, $l=(n,v)$, $c=(a,1)$ and $d=(b,n)$, where $u=\pi(1)$, $v=\pi(n)$, $a=\pi^{-1}(1)$, $b=\pi^{-1}(n)$.
Since deleting a border one (with its row and column) preserves any occurrence not containing it, and destroys any
occurrence containing it, we have:
\emph{$\sigma$ is a minimal violator if and only if $\IM$ contains an occurrence of $1324$ or $2143$, and every such
occurrence contains all four border ones.}

\emph{Step 1: coincidences among the border ones.}
The possible coincidences are $f=c$ ($u=1$), $f=d$ ($u=n$), $l=c$ ($v=1$), $l=d$ ($v=n$).
If $u=n$, then every occurrence contains $f$, i.e., its first letter is its largest one; but the patterns $1324$ and
$2143$ begin with their first and second smallest letters, respectively. Hence there is no occurrence at all,
contradicting non-smoothness. Similarly $v=1$ is impossible (the patterns do not end with their smallest letter).
Next suppose $u=1$ but $v\ne n$. Then $f=c$ and the border ones are $f$, $d=(b,n)$, $l=(n,v)$ with $v<n$; an
occurrence contains all three, and since its first letter is the smallest one ($1$ at position $1$), its pattern
begins with its minimal letter, so it must be $1324$; but then its last letter is its largest one, whereas the letter
at the last position is $v<n$, and $n$ occurs inside. This is a contradiction; hence $u=1$ implies $v=n$, and
symmetrically $v=n$ implies $u=1$. We are left with two cases: either $u=1$ and $v=n$, or $1<u,v<n$ and the four
border ones are distinct.

\emph{Step 2: the case $u=1$, $v=n$.}
Here $f=c=(1,1)$ and $l=d=(n,n)$. Let $W$ be the word $\pi(2),\dots,\pi(n-1)$.
For interior positions $i<j$, the quadruple $(1,i,j,n)$ is an occurrence (necessarily of $1324$) if and only if
$\pi(i)>\pi(j)$; such occurrences contain all border ones. Any other potential occurrence omits a border one and is
therefore forbidden:
\begin{itemize}
\item $(i,j,k,n)$, $1<i<j<k<n$: an occurrence of $1324$ if and only if $(\pi(i),\pi(j),\pi(k))$ forms a
$132$-pattern (an occurrence of $2143$ is impossible since the largest letter of $2143$ is its third one).
Hence $W$ must avoid $132$.
\item $(1,i,j,k)$: an occurrence of $1324$ if and only if $(\pi(i),\pi(j),\pi(k))$ forms a $213$-pattern
($2143$ is impossible since the second letter of $2143$ is its smallest one). Hence $W$ must avoid $213$.
\item occurrences within the interior contain a $132$- or $213$-pattern and are excluded by the above.
\end{itemize}
Thus, the case at hand holds if and only if $W$ avoids $132$ and $213$ and is not increasing.
We claim that $W$ avoids $132$ and $213$ if and only if $W$ is a concatenation of increasing runs of consecutive
integers, arranged in decreasing order of their values (so that non-increasing means $t\ge2$ runs, giving exactly the
family \cref{eq: famIpi}). Indeed, if $W$ has this form, then any ascent of $W$ lies within a single run, and both
patterns require an ascent $(x,y)$ together with a letter strictly between $x$ and $y$ located outside the positions
between them, which is impossible for a run of consecutive integers. Conversely, suppose $W$ avoids both patterns and
let $d$ be the position of the largest letter $t_0$ of $W$. Avoidance of $132$ for the triples $(i,d,k)$, $i<d<k$,
forces every letter after $d$ to be smaller than every letter before $d$; avoidance of $213$ for the triples
$(i,i',d)$ forces the prefix up to $d$ to be increasing. Hence this prefix consists of the largest letters in
increasing order, i.e., it is an increasing run of consecutive integers ending with $t_0$, and we conclude by
induction on the length applied to the remaining suffix.

\emph{Step 3: the case of distinct border ones.}
Every occurrence contains the four distinct border ones, hence consists exactly of them. The positions of $f,c,d,l$
are $(1,a,b,n)$ or $(1,b,a,n)$ in increasing order. If $b<a$ then the letter $n$ precedes the letter $1$ at the
second and third positions of the occurrence; but in $1324$ and $2143$ the largest letter does not precede the
smallest one. So there is no occurrence, a contradiction; hence $a<b$. The unique candidate occurrence
$(1,a,b,n)$ has the letters $(u,1,n,v)$, which form an occurrence (necessarily of $2143$) if and only if $u<v$.
Non-smoothness thus forces
\[
1<u<v<n,\qquad 1<a<b<n,
\]
and the requirement is that \emph{no other} quadruple is an occurrence. Let
$P$, $Q$, $R$ be the sets of non-border positions $i$ with $i<a$, $a<i<b$, $b<i$, respectively.
Forbidding the following quadruples (none of which contains all four border ones) yields, in turn:
\begin{itemize}
\item $(i,a,b,n)$, $i\in P$: letters $(\pi(i),1,n,v)$, an occurrence of $2143$ if and only if $\pi(i)<v$.
Hence $\pi(i)>v$ on $P$.
\item $(1,a,b,j)$, $j\in R$: letters $(u,1,n,\pi(j))$, an occurrence of $2143$ if and only if $\pi(j)>u$.
Hence $\pi(j)<u$ on $R$.
\item $(1,a,j,n)$, $j\in Q\cup R$: letters $(u,1,\pi(j),v)$, an occurrence of $2143$ if and only if $\pi(j)>v$.
Hence $\pi(j)<v$ for $j>a$; together with the previous item, $u<\pi(j)<v$ on $Q$.
\item $(1,i,j,b)$, $i<j$ in $P$: letters $(u,\pi(i),\pi(j),n)$, an occurrence of $1324$ if and only if
$u<\pi(j)<\pi(i)$. Since $\pi(i),\pi(j)>v>u$, this forces $\pi$ to be increasing on $P$.
Similarly, $(1,i,j,n)$ for $i<j$ in $Q$ and $(a,i,j,n)$ for $i<j$ in $R$ force $\pi$ to be increasing on $Q$ and
on $R$.
\item $(1,i,j,b)$, $i\in P$, $j\in Q$: letters $(u,\pi(i),\pi(j),n)$ with $u<\pi(j)<\pi(i)<n$ --- always an
occurrence of $1324$. Hence $P$ and $Q$ cannot both be nonempty. Similarly, $(a,i,j,n)$ for $i\in Q$, $j\in R$
has letters $(1,\pi(i),\pi(j),v)$ with $1<\pi(j)<\pi(i)<v$, so $Q$ and $R$ cannot both be nonempty.
\end{itemize}
Counting values yields $\#P=n-1-v$, $\#Q=v-u-1$, $\#R=u-2$.
If $Q=\emptyset$ then $v=u+1$, $b=a+1$, and the constraints above determine $\pi$ completely:
$\pi$ is given by \cref{eq: famIIpi} with $s=u$ and $r=n-u$ (the case $P=\emptyset$, resp.\ $R=\emptyset$,
corresponding to $r=2$, resp.\ $s=2$). If $Q\ne\emptyset$ then $P=R=\emptyset$, so $u=2$, $v=n-1$, $a=2$, $b=n-1$,
and $\pi$ is given by \cref{eq: famIIIpi}.

\emph{Step 4: sufficiency.}
In Step 2 it was already shown that the permutations \cref{eq: famIpi} with $t\ge2$ are minimal violators.
For \cref{eq: famIIIpi}, the only inversions of $\pi$ are at the positions $(1,2)$ and $(n-1,n)$; an occurrence of
$1324$ uses an inversion with a letter before it and a letter after it, which is impossible here, while an occurrence
of $2143$ uses two disjoint inversions, so the unique occurrence is $(1,2,n-1,n)$, which consists exactly of the four
border ones. For \cref{eq: famIIpi}, write $\pi=(s,\ P,\ 1,\ n,\ R,\ s+1)$, where $P$ (occupying the positions $2,\dots,a-1$,
$a=r$) and $R$ (occupying the positions $b+1,\dots,n-1$, $b=r+1$) are the increasing words with letters
$s+2,\dots,s+r-1$ and $2,\dots,s-1$, respectively. Abusively denoting positions by the blocks they belong to,
the inversions of $\pi$ are: $(x,a)$ with $x\in\{1\}\cup P$; $(x,y)$ with $x\in P$, $y\in R\cup\{n\}$;
$(1,y)$ with $y\in R$; and $(b,y)$ with $y\in R\cup\{n\}$.
An occurrence of $2143$ consists of two disjoint inversions $(i_1,i_2)$, $(i_3,i_4)$ with $i_2<i_3$ and
$\pi(i_2)<\pi(i_1)<\pi(i_4)<\pi(i_3)$. From the catalogue, an inversion $(i_1,i_2)$ with $i_2<i_3\le b$ must have
$i_2=a$, so $\pi(i_2)=1$ and $\pi(i_1)\in\{s\}\cup P$; and then $i_3=b$ ($i_3=a$ is excluded, and there is no
inversion starting in $R$). If $i_4\in R$ then $\pi(i_4)\le s-1<s\le\pi(i_1)$, a contradiction; so $i_4=n$ and
$\pi(i_4)=s+1$, forcing $\pi(i_1)=s$, i.e., $i_1=1$. Thus the unique occurrence of $2143$ is $(1,a,b,n)$.
An occurrence of $1324$ consists of an inversion $(i_2,i_3)$ together with positions $i_1<i_2$, $i_4>i_3$ such that
$\pi(i_1)<\pi(i_3)$ and $\pi(i_4)>\pi(i_2)$. Running over the catalogue: if $i_3=a$ then $\pi(i_1)<1$, which is
impossible; if $i_2\in\{b\}$ then $\pi(i_4)>n$, impossible; if $i_2\in P$ and $i_3\in R\cup\{n\}$ then either
$i_3=n$ is the last position (no room for $i_4$) or $\pi(i_4)>\pi(i_2)\ge s+2$ with $i_4$ after a position of $R$,
whereas all letters there are at most $s+1$, impossible; and if $i_2=1$, $i_3\in R$, then there is no room for
$i_1$. Hence there is no occurrence of $1324$. Since every occurrence contains all four border
ones, all four deletions are smooth, while $\sigma$ itself is not.
\end{proof}

\subsection{End of proof}

It remains to prove that the minimal violators are not rigid. We use two general criteria, valid for an arbitrary incidence matrix $\IM$.

By \cref{rem: covers}, if the covers of $\U$ (viewed as edges on $\{1,\dots,n\}$) contain a cycle, then $\IM$ is not rigid.
For permutation matrices, this criterion propagates to thickenings.

\begin{lemma} \label{lem: thickcycle}
Let $\widetilde\sigma$ be a thickening of $\sigma$.  The cover graph of
$\IM_{\widetilde\sigma}$ contains a cycle if and only if the cover graph of
$\IM_\sigma$ does. 
\end{lemma}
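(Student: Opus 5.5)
The plan is to describe the cover graph of $\IM_{\widetilde\sigma}$ explicitly in terms of the cover graph of $\IM_\sigma$, and then compare cycle ranks. Throughout I write \emph{block} $A_i$ for the $k_i$ ones of $\IM_{\widetilde\sigma}$ coming from the $i$-th one of $\IM_\sigma$. They occupy a contiguous set of $k_i$ rows and $k_i$ columns, disjoint from those of every other block, and lie along a diagonal. The relations $\homr$ and $\extr$ are as in \cref{rem: covers}. For distinct ones of a permutation matrix, $\homr$ coincides with $\extr$, since rows and columns are distinct. The target statement is the following: the cover graph of $\IM_{\widetilde\sigma}$ is obtained from that of $\IM_\sigma$ by replacing each vertex $i$ by a path on the ones of $A_i$ in diagonal order, and replacing each cover edge $\{i,j\}$ (say $i\extr j$) by the single edge joining the last one of $A_i$ to the first one of $A_j$.

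\emph{Step 1 (within a block).} Consecutive ones $x=(p,q)$ and $x'=(p+1,q+1)$ of a block satisfy $x\extr x'$. A possible intermediary $z$ would need its row and column strictly between, or weakly between with strict inequality on one side, and no such one exists. Non-consecutive ones of a block have the middle ones as intermediaries. So the covers inside $A_i$ form exactly the diagonal path.

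\emph{Step 2 (between blocks).} If $x\in A_i$ and $y\in A_j$ with $i\ne j$, then $x\extr y$ if and only if $i\extr j$ in $\IM_\sigma$. This holds because the row and column ranges of the blocks are disjoint and ordered as the original entries. Now suppose $x\extr y$.
\begin{itemize}
\item If $x$ is not the last one of $A_i$, its successor $x'$ satisfies $x\extr x'\extr y$, so $(x,y)$ is not a cover. Symmetrically, if $y$ is not the first one of $A_j$, then $(x,y)$ is not a cover.
\item Any intermediary of $(x,y)$ lying in a third block $A_l$ yields the intermediary $l$ of $(i,j)$. Conversely, an intermediary $l$ of $(i,j)$ gives every one of $A_l$ as an intermediary of $(x,y)$.
\item Intermediaries inside $A_i$ or $A_j$ are excluded once $x$ is last and $y$ is first.
\end{itemize}
Hence $(x,y)$ is a cover if and only if $(i,j)$ is a cover, $x$ is last in $A_i$ and $y$ is first in $A_j$. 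This establishes the description.

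\emph{Step 3 (cycle comparison).} The cover graph of $\IM_\sigma$ is the quotient of the cover graph of $\IM_{\widetilde\sigma}$ by contracting each block path. By Step 2, this contraction creates neither loops nor parallel edges. Contracting a path with $k$ vertices removes $k-1$ vertices and $k-1$ edges and preserves the number of connected components. Therefore the cycle rank $\#E-\#V+\#\{\text{components}\}$ is unchanged. A graph contains a cycle if and only if its cycle rank is positive, which proves the lemma.

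The main obstacle is Step 2, namely ruling out intermediaries correctly. One must check that an intermediary $z$ with $x\homr z\extr y$ or $x\extr z\homr y$ cannot lie in $A_i$ or $A_j$ in a way not already accounted for. This is exactly where the choice of last/first and the diagonal shape of the blocks are used, and I would verify it by a direct coordinate inequality check.
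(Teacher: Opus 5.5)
Your proof is correct and follows the paper's argument. Both describe the cover graph of $\IM_{\widetilde\sigma}$ as that of $\IM_\sigma$ with each vertex blown up into a chain: incoming covers attach at the first one of the chain and outgoing covers at the last one, after which the cycles are compared. The differences are minor: you treat all blocks at once rather than one duplication at a time, and you compare cycle ranks $\#E-\#V+\#\{\text{components}\}$ instead of lifting and contracting cycles, which spares you checking that a contracted cycle still contains a cycle.
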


\begin{proof}
It is enough to consider one duplication, i.e., replacing a one $z=(a,b)$ with the identity block consisting of $z_1=(a,b)$ and $z_2=(a+1,b+1)$ (and shifting the
subsequent rows and columns). Since $\IM_\sigma$ is a permutation matrix, no one of $\IM_{\tilde\sigma}$ other than
$z_1,z_2$ lies in the rows $a,a+1$ or the columns $b,b+1$. Hence, for every such one $w$ we have
$w\homr z_i\iff w\homr z$, $z_i\homr w\iff z\homr w$, and likewise for $\extr$ ($i=1,2$; recall that for a
permutation matrix $w\homr z$, $w\ne z$ implies $w\extr z$); in addition $z_1\extr z_2$. It follows that
\begin{itemize}
\item for $u,v\notin\{z_1,z_2\}$, the pair $(u,v)$ is a cover of $\IM_{\tilde\sigma}$ if and only if it is a cover of
$\IM_\sigma$ (the new candidate intermediaries $z_1,z_2$ behave exactly like $z$);
\item $(w,z_1)$ is a cover if and only if $(w,z)$ is (the only new candidate intermediary is $z_2$, and
$z_2\extr z_1$, $z_2\homr z_1$ both fail); similarly $(z_2,w)$ is a cover if and only if $(z,w)$ is;
\item $(w,z_2)$ and $(z_1,w)$ are never covers ($z_1$, resp.\ $z_2$, is an intermediary);
\item $(z_1,z_2)$ is always a cover (an intermediary $y$ would satisfy $z\homr y$ and $y\homr z$, hence $y=z$).
\end{itemize}
Thus, the new cover graph is obtained by replacing a vertex $z$ by an edge $z_1-z_2$, attaching
all incoming cover edges at $z_1$ and all outgoing cover edges at $z_2$.  A cycle in the
old graph therefore lifts to a cycle in the new one.  Conversely, contracting the edge
$z_1-z_2$ maps every cycle in the new graph to a closed walk in the old graph, which
contains a cycle.
\end{proof}

\begin{corollary} \label{cor: families12}
The thickenings of $(m,2,\dots,m-1,1)$, $m\ge4$, as well as the permutations $\tau_{r,s}$ of \cref{lem: minviol}, are not rigid.
\end{corollary}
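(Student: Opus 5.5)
The plan is to reduce everything to the cover-graph criterion. By \cref{rem: covers}, if the graph formed by the covers of $\U$ contains a cycle, then $\IM$ is not rigid. By \cref{lem: thickcycle}, having a cycle in the cover graph is invariant under thickening. So it suffices to find a finite (or easily parametrized) list of base permutations, of which all the permutations in question are thickenings, and to exhibit a cycle in the cover graph of each base.

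\emph{Family \labelcref{part: mvI}.} These are by definition thickenings of $\sigma=(m,2,\dots,m-1,1)$ with $m\ge4$. Here $\pi=w_0\sigma=(1,m-1,m-2,\dots,2,m)$, so the ones of $\IM_\sigma$ are
\begin{itemize}
\item $p=(1,1)$,
\item $q=(m,m)$,
\item the anti-diagonal ones $r_i=(i,m+1-i)$ for $2\le i\le m-1$.
\end{itemize}
The comparisons are as follows. We have $p\extr r_i\extr q$ for all $i$, while the $r_i$ are pairwise incomparable, since their rows increase while their columns decrease.

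Next we identify the covers. Take the pair $(p,r_i)$. An intermediary $z$ must satisfy $p\homr z$ and $z\extr r_i$, or $p\extr z$ and $z\homr r_i$. In either case $z\ne p,r_i$ forces $z=r_j$ comparable to $r_i$, or $z=q$, and $q\homr r_i$ fails. So $(p,r_i)$ is a cover. By the symmetric argument, $(r_i,q)$ is a cover. Since $m\ge4$, there are at least two middle ones $r_2,r_3$, and then
\[
p - r_2 - q - r_3 - p
\]
is a $4$-cycle of covers.

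\emph{Family \labelcref{part: mvII}.} The permutations $\tau_{r,s}$ are also thickenings. In $\sigma$-notation, replacing a one by an identity block in $\IM_\sigma$ replaces a letter by a decreasing run of consecutive values. The runs $r-1,\dots,2$ and $k-1,\dots,r+2$ in $\tau_{r,s}$ are such runs, and they are empty exactly when $r=2$, resp.\ $s=2$. Collapsing each run to a single letter shows that every $\tau_{r,s}$ is a thickening of one of four bases:
\begin{itemize}
\item $\tau_{2,2}=3412$;
\item $\tau_{2,3}$ and $\tau_{3,2}$, which are $5$-letter permutations;
\item $\tau_{3,3}=426153$.
\end{itemize}
For $3412$ we have $\pi=(2,1,4,3)$, with ones $(1,2),(2,1),(3,4),(4,3)$. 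Each of the first two ones is $\extr$ each of the last two, and no intermediary exists. So the four covers form a $4$-cycle. The remaining three bases are handled by the same finite direct computation: list the ones of $\IM_\pi$, list the relations $\extr$ and $\homr$, determine the covers, and exhibit a cycle. In each case the four border ones already give a $4$-cycle of covers of the same shape as in $3412$. Also, $\tau_{2,3}$ and $\tau_{3,2}$ are related by the symmetries of \cref{lem: symm}, which preserve the cover graph, so only one of them needs checking.

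The main obstacle is the verification for the bases $\tau_{2,3}$ and $\tau_{3,3}$. There one must make sure that the extra non-border ones do not serve as intermediaries for the border pairs $(f,l)$-type edges, which would destroy the would-be cycle. I would do this by the explicit position check described above, using the description of the inversions of $\pi$ from Step~4 of the proof of \cref{lem: minviol}. Once the cycles are exhibited, \cref{lem: thickcycle} transfers them to all thickenings, and \cref{rem: covers} gives non-rigidity.
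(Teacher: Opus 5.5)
Your strategy is the paper's own. You use the cover-cycle criterion of \cref{rem: covers}, transport cycles along thickenings by \cref{lem: thickcycle}, reduce $\tau_{r,s}$ to $\tau_{\min(r,3),\min(s,3)}$, and use the symmetry $\tau_{r,s}=w_0\tau_{s,r}w_0$. Your treatment of $(m,2,\dots,m-1,1)$ is correct: the cover graph is $K_{2,m-2}$, which contains a $4$-cycle once $m\ge4$.

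The gap is your claim that for the bases $\tau_{3,2}$, $\tau_{2,3}$ and $\tau_{3,3}$ ``the four border ones already give a $4$-cycle of covers of the same shape as in $3412$''. This is false. The extra non-border ones are exactly intermediaries for some of the border pairs, so the check you flag as the main obstacle comes out negative.

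\begin{itemize}
\item For $\tau_{3,2}=42513$ we have $\pi=(2,4,1,5,3)$, with ones $(1,2),(2,4),(3,1),(4,5),(5,3)$. Since $(1,2)\extr(2,4)\extr(4,5)$, the border pair $((1,2),(4,5))$ is not a cover.
\item Symmetrically, for $\tau_{2,3}=35142$ the one $(4,2)$ is an intermediary for $((2,1),(5,4))$.
\item For $\tau_{3,3}=426153$ we have $\pi=(3,5,1,6,2,4)$. Here $(2,5)$ is an intermediary for $((1,3),(4,6))$, and $(5,2)$ is one for $((3,1),(6,4))$, so two of the four border edges are lost.
\end{itemize}

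As written, your proposal therefore exhibits no cycle for these bases. The repair is to route the cycle through the non-border ones: replace each destroyed border edge by the two covers passing through its intermediary. This gives the $5$-cycle $(1,2)-(2,4)-(4,5)-(3,1)-(5,3)-(1,2)$ for $42513$ and the $6$-cycle $(1,3)-(2,5)-(4,6)-(3,1)-(5,2)-(6,4)-(1,3)$ for $426153$. These are exactly the cycles listed in the paper. A direct check shows every pair in them is a cover: in each case the only ones below the larger endpoint are incomparable with the smaller endpoint. With these cycles in place, your argument goes through.
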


\begin{proof}
Note that $\tau_{r,s}$ is a thickening of $\tau_{r_0,s_0}$, where $r_0=\min(r,3)$ and $s_0=\min(s,3)$ and $\tau_{r,s}=w_0\tau_{s,r}w_0$.

Therefore, by \cref{lem: thickcycle} and symmetries it suffices to exhibit a cycle among the covers of $\IM_\tau$ where $\tau$ is
$(m,2,\dots,m-1,1)$, $m\ge4$, $\tau_{2,2}=3412$, $\tau_{3,2}=42513$ and $\tau_{3,3}=426153$. We have the following cycles in the cover graph
(denoting the vertices by their one-positions in $\IM_\tau$):
\begin{align*}
(m,2,\dots,m-1,1):&\quad (1,1)-(2,m-1)-(m,m)-(m-1,2)-(1,1),\\
3412:&\quad (1,2)-(3,4)-(2,1)-(4,3)-(1,2),\\
42513:&\quad (1,2)-(2,4)-(4,5)-(3,1)-(5,3)-(1,2),\\
426153:&\quad (1,3)-(2,5)-(4,6)-(3,1)-(5,2)-(6,4)-(1,3).
\end{align*}
\end{proof}

When the covering criterion does not apply we will use the following criterion.

\begin{proposition} \label{prop: covertree}
Suppose that the covers of $\U$, viewed as edges on the ones of $\IM$, form a spanning tree.
Then the following are equivalent:
\begin{enumerate}
\item \label{part: treerigid} $\IM$ is rigid;
\item \label{part: treestrong} $\IM$ is strongly rigid;
\item \label{part: treerank} $\rk M(\lambda)=\#\U$ for one (equivalently, every) $\lambda$ whose support is the set of covers.
\end{enumerate}
\end{proposition}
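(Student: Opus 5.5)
The plan is to prove the cycle (1)$\Rightarrow$(3)$\Rightarrow$(2)$\Rightarrow$(1). The implication (2)$\Rightarrow$(1) holds by definition. The key input is \cref{lem: coinvactive} together with \cref{rem: covers}: on success the covers are always active. When the covers form a spanning tree, this forces the active set. Indeed, $\Phi$ is a forest on the $n$ ones (\cref{thm:rigidity}), hence has at most $n-1$ edges. It also contains the $n-1$ cover edges, so $\Phi$ equals the set of covers exactly.

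For (1)$\Rightarrow$(2): suppose $\IM$ is rigid and run the procedure of \cref{thm:rigidity} with any adapted enumeration. It succeeds, and by the count above $\Phi=\text{covers}$. By the characterization following \cref{rem: covers} (strong rigidity means rigid with $\Phi$ equal to the covers), $\IM$ is strongly rigid. The same argument gives (1)$\Rightarrow$(3). By \cref{thm:rigidity}\cref{part: rigforest}, on success $\rk M(\lambda)=\#\U$ for \emph{every} $\lambda$ with $\supp\lambda=\Phi=\text{covers}$; this also settles the ``one, equivalently every'' clause in the rigid case.

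For (3)$\Rightarrow$(1): if $\rk M(\lambda)=\#\U$ for some $\lambda$, then the infinitesimal orbit map at $L(\lambda)$ is surjective. Hence $L(\lambda)$ has an open (separable) orbit and $\IM$ is rigid. Combined with the previous paragraph, rank $\#\U$ for one such $\lambda$ then implies it for every $\lambda$ supported on the covers.

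The step needing the most care is making sure the ``equivalently every'' in (3) is not circular. If (3) is read with ``one'', it gives rigidity as above, and rigidity then yields ``every''. If it is read with ``every'', it trivially implies ``one''. I would also check explicitly that the spanning-tree hypothesis is used only in the counting argument $\#\text{covers}=n-1\ge\#\Phi\ge\#\text{covers}$. The acyclicity of the covers is then not an extra assumption that could conflict with abort, since success is already given in that direction.
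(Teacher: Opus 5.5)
Your proof is correct and is essentially the paper's argument. On success, $\Phi$ is acyclic and contains the covers (\cref{lem: coinvactive} with \cref{rem: covers}), so it must coincide with the spanning tree of covers; \cref{thm:rigidity} then gives full rank for every $\lambda$ supported there, and full rank at a single $\lambda$ gives an open orbit. The differences are cosmetic: you prove (1)$\Rightarrow$(3) directly instead of going through (2), and you spell out the ``one, equivalently every'' clause that the paper leaves implicit in its cycle (1)$\Rightarrow$(2)$\Rightarrow$(3)$\Rightarrow$(1). Note also that the implications you actually prove are not the cycle you announce at the start, though they still suffice for the equivalence.
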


\begin{proof}
\labelcref{part: treerigid}$\Rightarrow$\labelcref{part: treestrong}: in a successful run of the procedure of \cref{thm:rigidity} the
the active set $\Phi$ contains the covers by \cref{rem: covers} and \cref{lem: coinvactive} and is acyclic, hence
coincides with the spanning tree. \labelcref{part: treestrong}$\Rightarrow$\labelcref{part: treerank} is \cref{thm:rigidity}\labelcref{part: rigforest} with
$\Phi=\mathrm{covers}$, and \labelcref{part: treerank}$\Rightarrow$\labelcref{part: treerigid} is clear, since full rank at one point implies generic full rank.
\end{proof}

\begin{corollary}
The permutations $\upsilon_r$ of \cref{lem: minviol} are not rigid.
\end{corollary}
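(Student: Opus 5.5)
By \cref{prop: covertree}, it suffices to show two things: the covers of $\IM_{\upsilon_r}$ form a spanning tree, and $\rk M(\lambda)<\#\U$ for $\lambda$ the indicator function of the covers. I would do the computation uniformly in $r$, working directly with the ones of the matrix. I would not try to reduce to $45312$, because \cref{lem: thickcycle} transports only cycles, not rank deficiency.

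\emph{Setting up the ones and the relations.} Put $n=r+2$. As in the proof of \cref{lem: minviol}, $\pi=w_0\upsilon_r=(2,1,3,4,\dots,n-2,n,n-1)$. The ones are therefore
\[
p_1=(1,2),\quad p_2=(2,1),\quad c_i=(i,i)\ (3\le i\le n-2),\quad q_1=(n-1,n),\quad q_2=(n,n-1).
\]
Since $r\ge3$, the chain $c_3,\dots,c_{n-2}$ is nonempty. Since $\homr$ is the componentwise order, the relations are as follows. The ones $p_1,p_2$ are incomparable, and so are $q_1,q_2$. Each $p_a$ satisfies $p_a\extr$ every $c_i$ and every $q_b$. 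The $c_i$ form an $\extr$-chain, and every $c_i\extr q_b$. Hence $\U$ consists of the pairs $(p_a,c_i)$, $(p_a,q_b)$, $(c_i,c_j)$ with $i<j$, and $(c_i,q_b)$.

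\emph{The cover graph.} A pair has no intermediary exactly in the following cases: $(p_a,c_3)$, $(c_i,c_{i+1})$ and $(c_{n-2},q_b)$. The pairs $(p_a,q_b)$ have the intermediary $c_3$, and all other pairs have an intermediate chain element. This gives $n-1$ edges on $n$ vertices forming a connected graph, hence a spanning tree. So \cref{prop: covertree} applies. Let $L=\sum_{x\ \mathrm{cover}}e_x$.

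\emph{Rank deficiency.} Full rank of $X\mapsto[X,L]$ from $\Lieg$ to $\uu$ is equivalent to the absence of a nonzero $Y\in\langle e_{ji}:(i,j)\in\U\rangle$ with $\tr(Y[X,L])=0$ for all $X\in\Lieg$. Equivalently, the entries of $[L,Y]$ at the positions $(j,i)$ with $(i,j)\in\V$ must all vanish. I would exhibit such a $Y$ supported on the transposes of the four pairs $(p_a,q_b)$:
\[
Y=e_{q_1p_1}-e_{q_1p_2}-e_{q_2p_1}+e_{q_2p_2}.
\]
Only the cover entries of $L$ touching $p_a$ or $q_b$ contribute to the products. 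Thus $LY$ is supported on the entries $(c_{n-2},p_a)$, with coefficient the column sum $\sum_b y_{q_bp_a}$. Likewise $YL$ is supported on the entries $(q_b,c_3)$, with coefficient the row sum $\sum_a y_{q_bp_a}$. All row and column sums of the $2\times2$ sign matrix vanish, so $[L,Y]$ vanishes identically on the relevant positions. Hence $\rk M(\lambda)<\#\U$, and by \cref{prop: covertree} $\IM_{\upsilon_r}$ is not rigid.

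The main obstacle is the bookkeeping that $LY$ and $YL$ have no further contributions. For instance, one must check that no cover enters $q_b$ from a vertex other than $c_{n-2}$, and none leaves $p_a$ toward a vertex other than $c_3$. This holds by the description of the covers above.
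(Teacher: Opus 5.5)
Your proof is correct and is essentially the paper's argument: the same labelling of the ones, the same cover tree, the appeal to \cref{prop: covertree}, and the same four-term dependence among the rows indexed by $(p_a,q_b)$. Your functional $Y$ is the trace-pairing dual of the paper's identity $\operatorname{row}(x_1,T)-\operatorname{row}(x_1,z)-\operatorname{row}(B,T)+\operatorname{row}(B,z)=0$, specialized to the indicator $\lambda$; in fact $[L,Y]=0$ identically, which is more than you need.
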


\begin{proof}
The ones are $x_1=(1,2)$, $B=(2,1)$, $M_i=(2+i,2+i)$ ($1\le i\le q$, $q=r-2\ge1$), $T=(q+3,q+4)$
and $z=(q+4,q+3)$. Here the covers are the consecutive pairs $(M_i,M_{i+1})$ together with $(x_1,M_1)$, $(B,M_1)$,
$(M_q,T)$ and $(M_q,z)$; they form a tree, so the cover criterion does not apply. Instead we use \cref{prop: covertree}.

The only cover into $T$ (resp.\ $z$) is $(M_q,T)$ (resp.\ $(M_q,z)$), and the only cover out of $x_1$ (resp.\ $B$) is $(x_1,M_1)$
(resp.\ $(B,M_1)$). Hence, for $\lambda$ supported on the covers, the rows of $M(\lambda)$
indexed by $(x_1,T)$, $(x_1,z)$, $(B,T)$, $(B,z)$ are (writing $e_v$, $v\in\V$, for the standard
basis of the column space)
\begin{align*}
\operatorname{row}(x_1,T)&=\lambda_{(M_q,T)}e_{(x_1,M_q)}-\lambda_{(x_1,M_1)}e_{(M_1,T)},\\
\operatorname{row}(x_1,z)&=\lambda_{(M_q,z)}e_{(x_1,M_q)}-\lambda_{(x_1,M_1)}e_{(M_1,z)},\\
\operatorname{row}(B,T)&=\lambda_{(M_q,T)}e_{(B,M_q)}-\lambda_{(B,M_1)}e_{(M_1,T)},\\
\operatorname{row}(B,z)&=\lambda_{(M_q,z)}e_{(B,M_q)}-\lambda_{(B,M_1)}e_{(M_1,z)},
\end{align*}
which satisfy the identity
\begin{multline*}
\lambda_{(M_q,z)}\lambda_{(B,M_1)}\operatorname{row}(x_1,T)
-\lambda_{(M_q,T)}\lambda_{(B,M_1)}\operatorname{row}(x_1,z)\\
-\lambda_{(M_q,z)}\lambda_{(x_1,M_1)}\operatorname{row}(B,T)
+\lambda_{(M_q,T)}\lambda_{(x_1,M_1)}\operatorname{row}(B,z)=0.
\end{multline*}
Hence $\rk M(\lambda)<\#\U$ and $\IM$ is not rigid by \cref{prop: covertree}.
\end{proof}

Together with \cref{cor: families12} we conclude from \cref{lem: minviol} that the minimal violators are not rigid.
By \cref{quote: minviolsuffice} this yields the direction \labelcref{part: permrigid}$\Rightarrow$\labelcref{part: permsmooth} of \cref{thm: perm}.
Since \labelcref{part: permsmooth}$\Rightarrow$\labelcref{part: permstrong} is \cref{prop: smoothstrong} and \labelcref{part: permstrong}$\Rightarrow$\labelcref{part: permrigid} holds trivially this concludes the proof of \cref{thm: perm}.

\section*{Acknowledgments}

The author is deeply indebted to Alberto M\'{\i}nguez for many discussions during the earlier stage of the project
and for allowing me to include some of the ideas which came up in the discussions.
The author would also like to thank Shachar Carmeli for a number of helpful conversations.

\section*{Declaration of generative AI and AI-assisted technologies in
the manuscript preparation process}

During the preparation and revision of this manuscript, the author used
ChatGPT (OpenAI) and Claude Opus 5 (Anthropic) to assist with language and exposition, to identify
possible internal inconsistencies and cross-reference issues, to flag
questions concerning bibliographic references and attribution, and to create the code for producing Table~\ref{tab: rigidcounts}. The author
independently checked the mathematical arguments, verified any
AI-assisted bibliographic suggestions against the cited sources, reviewed
and edited all AI-assisted suggestions, and takes full responsibility for
the content of the article.

\def\cprime{$'$}


\begin{bibdiv}
\begin{biblist}

\bib{MR4847234}{article}{
      author={Aizenbud, Avraham},
      author={Lapid, Erez},
       title={A binary operation on irreducible components of {L}usztig's
  nilpotent varieties {I}: definition and properties},
        date={2025},
        ISSN={1558-8599,1558-8602},
     journal={Pure Appl. Math. Q.},
      volume={21},
      number={1},
       pages={5\ndash 41},
         url={https://doi.org/10.4310/pamq.241203024730},
      review={\MR{4847234}},
}

\bib{MR1990570}{article}{
      author={Billey, Sara~C.},
      author={Warrington, Gregory~S.},
       title={Maximal singular loci of {S}chubert varieties in {${\rm
  SL}(n)/B$}},
        date={2003},
        ISSN={0002-9947},
     journal={Trans. Amer. Math. Soc.},
      volume={355},
      number={10},
       pages={3915\ndash 3945},
         url={http://dx.doi.org/10.1090/S0002-9947-03-03019-8},
      review={\MR{1990570}},
}

\bib{MR1102012}{book}{
      author={Borel, Armand},
       title={Linear algebraic groups},
     edition={Second},
      series={Graduate Texts in Mathematics},
   publisher={Springer-Verlag},
     address={New York},
        date={1991},
      volume={126},
        ISBN={0-387-97370-2},
      review={\MR{MR1102012 (92d:20001)}},
}

\bib{MR2376109}{article}{
      author={Bousquet-M\'elou, Mireille},
      author={Butler, Steve},
       title={Forest-like permutations},
        date={2007},
        ISSN={0218-0006},
     journal={Ann. Comb.},
      volume={11},
      number={3-4},
       pages={335\ndash 354},
         url={http://dx.doi.org/10.1007/s00026-007-0322-1},
      review={\MR{2376109}},
}

\bib{MR1736958}{article}{
      author={Br{\"u}stle, T.},
      author={Hille, L.},
      author={R{\"o}hrle, G.},
      author={Zwara, G.},
       title={The {B}ruhat-{C}hevalley order of parabolic group actions in
  general linear groups and degeneration for {$\Delta$}-filtered modules},
        date={1999},
        ISSN={0001-8708},
     journal={Adv. Math.},
      volume={148},
      number={2},
       pages={203\ndash 242},
         url={http://dx.doi.org/10.1006/aima.1999.1851},
      review={\MR{1736958 (2001c:14074)}},
}

\bib{MR879187}{article}{
      author={B\"{u}rgstein, Hartmut},
      author={Hesselink, Wim~H.},
       title={Algorithmic orbit classification for some {B}orel group actions},
        date={1987},
        ISSN={0010-437X,1570-5846},
     journal={Compositio Math.},
      volume={61},
      number={1},
       pages={3\ndash 41},
         url={http://www.numdam.org/item?id=CM_1987__61_1_3_0},
      review={\MR{879187}},
}

\bib{MR2255427}{article}{
      author={Cameron, Peter},
      author={Prellberg, Thomas},
      author={Stark, Dudley},
       title={Asymptotics for incidence matrix classes},
        date={2006},
        ISSN={1077-8926},
     journal={Electron. J. Combin.},
      volume={13},
      number={1},
       pages={Research Paper 85, 19},
         url={https://doi.org/10.37236/1111},
      review={\MR{2255427}},
}

\bib{MR1994224}{article}{
      author={Cortez, Aur{\'e}lie},
       title={Singularit\'es g\'en\'eriques et quasi-r\'esolutions des
  vari\'et\'es de {S}chubert pour le groupe lin\'eaire},
        date={2003},
        ISSN={0001-8708},
     journal={Adv. Math.},
      volume={178},
      number={2},
       pages={396\ndash 445},
         url={http://dx.doi.org/10.1016/S0001-8708(02)00081-6},
      review={\MR{1994224}},
}

\bib{MR1834739}{article}{
      author={Crawley-Boevey, William},
       title={Geometry of the moment map for representations of quivers},
        date={2001},
        ISSN={0010-437X},
     journal={Compositio Math.},
      volume={126},
      number={3},
       pages={257\ndash 293},
         url={https://doi.org/10.1023/A:1017558904030},
      review={\MR{1834739}},
}

\bib{MR1944812}{article}{
      author={Crawley-Boevey, William},
      author={Schr\"{o}er, Jan},
       title={Irreducible components of varieties of modules},
        date={2002},
        ISSN={0075-4102},
     journal={J. Reine Angew. Math.},
      volume={553},
       pages={201\ndash 220},
         url={https://doi.org/10.1515/crll.2002.100},
      review={\MR{1944812}},
}

\bib{MR0332887}{article}{
      author={Gabriel, Peter},
       title={Unzerlegbare {D}arstellungen. {I}},
        date={1972},
        ISSN={0025-2611},
     journal={Manuscripta Math.},
      volume={6},
       pages={71\ndash 103; correction, ibid. 6 (1972), 309},
      review={\MR{0332887 (48 \#11212)}},
}

\bib{MR1827861}{article}{
      author={Gasharov, Vesselin},
       title={Sufficiency of {L}akshmibai-{S}andhya singularity conditions for
  {S}chubert varieties},
        date={2001},
        ISSN={0010-437X},
     journal={Compositio Math.},
      volume={126},
      number={1},
       pages={47\ndash 56},
         url={http://dx.doi.org/10.1023/A:1017585921369},
      review={\MR{1827861}},
}

\bib{MR2144987}{article}{
      author={Geiss, Christof},
      author={Leclerc, Bernard},
      author={Schr\"{o}er, Jan},
       title={Semicanonical bases and preprojective algebras},
        date={2005},
        ISSN={0012-9593},
     journal={Ann. Sci. \'{E}cole Norm. Sup. (4)},
      volume={38},
      number={2},
       pages={193\ndash 253},
         url={https://doi.org/10.1016/j.ansens.2004.12.001},
      review={\MR{2144987}},
}

\bib{MR2115084}{article}{
      author={Geiss, Christof},
      author={Schr\"{o}er, Jan},
       title={Extension-orthogonal components of preprojective varieties},
        date={2005},
        ISSN={0002-9947},
     journal={Trans. Amer. Math. Soc.},
      volume={357},
      number={5},
       pages={1953\ndash 1962},
         url={https://doi.org/10.1090/S0002-9947-04-03555-X},
      review={\MR{2115084}},
}

\bib{MR2123984}{article}{
      author={Goodwin, Simon},
       title={Algorithmic testing for dense orbits of {B}orel subgroups},
        date={2005},
        ISSN={0022-4049,1873-1376},
     journal={J. Pure Appl. Algebra},
      volume={197},
      number={1-3},
       pages={171\ndash 181},
         url={https://doi.org/10.1016/j.jpaa.2004.08.038},
      review={\MR{2123984}},
}

\bib{MR2356319}{article}{
      author={Goodwin, Simon~M.},
      author={Hille, Lutz},
       title={Prehomogeneous spaces for {B}orel subgroups of general linear
  groups},
        date={2007},
        ISSN={1083-4362,1531-586X},
     journal={Transform. Groups},
      volume={12},
      number={3},
       pages={475\ndash 498},
         url={https://doi.org/10.1007/s00031-006-0052-1},
      review={\MR{2356319}},
}

\bib{MR1669178}{article}{
      author={Hille, L.},
      author={R{\"o}hrle, G.},
       title={A classification of parabolic subgroups of classical groups with
  a finite number of orbits on the unipotent radical},
        date={1999},
        ISSN={1083-4362},
     journal={Transform. Groups},
      volume={4},
      number={1},
       pages={35\ndash 52},
         url={http://dx.doi.org/10.1007/BF01236661},
      review={\MR{1669178 (2000f:20072)}},
}

\bib{MR2033108}{article}{
      author={Hille, Lutz},
      author={Vossieck, Dieter},
       title={The quasi-hereditary algebra associated to the radical bimodule
  over a hereditary algebra},
        date={2003},
        ISSN={0010-1354},
     journal={Colloq. Math.},
      volume={98},
      number={2},
       pages={201\ndash 211},
         url={http://dx.doi.org/10.4064/cm98-2-6},
      review={\MR{2033108 (2005c:16021)}},
}

\bib{MR3228477}{article}{
      author={Jensen, Bernt~Tore},
      author={Su, Xiuping},
       title={Adjoint action of automorphism groups on radical endomorphisms,
  generic equivalence and {D}ynkin quivers},
        date={2014},
        ISSN={1386-923X,1572-9079},
     journal={Algebr. Represent. Theory},
      volume={17},
      number={4},
       pages={1095\ndash 1136},
         url={https://doi.org/10.1007/s10468-013-9436-9},
      review={\MR{3228477}},
}

\bib{MR2481983}{article}{
      author={Jensen, Bernt~Tore},
      author={Su, Xiuping},
      author={Yu, Rupert Wei~Tze},
       title={Rigid representations of a double quiver of type {$A$}, and
  {R}ichardson elements in seaweed {L}ie algebras},
        date={2009},
        ISSN={0024-6093,1469-2120},
     journal={Bull. Lond. Math. Soc.},
      volume={41},
      number={1},
       pages={1\ndash 15},
         url={https://doi.org/10.1112/blms/bdn087},
      review={\MR{2481983}},
}

\bib{MR3758148}{article}{
      author={Kang, Seok-Jin},
      author={Kashiwara, Masaki},
      author={Kim, Myungho},
      author={Oh, Se-jin},
       title={Monoidal categorification of cluster algebras},
        date={2018},
        ISSN={0894-0347},
     journal={J. Amer. Math. Soc.},
      volume={31},
      number={2},
       pages={349\ndash 426},
         url={https://doi.org/10.1090/jams/895},
      review={\MR{3758148}},
}

\bib{MR1458969}{article}{
      author={Kashiwara, Masaki},
      author={Saito, Yoshihisa},
       title={Geometric construction of crystal bases},
        date={1997},
        ISSN={0012-7094},
     journal={Duke Math. J.},
      volume={89},
      number={1},
       pages={9\ndash 36},
         url={http://dx.doi.org/10.1215/S0012-7094-97-08902-X},
      review={\MR{1458969 (99e:17025)}},
}

\bib{MR2015302}{article}{
      author={Kassel, Christian},
      author={Lascoux, Alain},
      author={Reutenauer, Christophe},
       title={The singular locus of a {S}chubert variety},
        date={2003},
        ISSN={0021-8693},
     journal={J. Algebra},
      volume={269},
      number={1},
       pages={74\ndash 108},
         url={http://dx.doi.org/10.1016/S0021-8693(03)00014-0},
      review={\MR{2015302 (2005f:14096)}},
}

\bib{MR1944442}{book}{
      author={Kimura, Tatsuo},
       title={Introduction to prehomogeneous vector spaces},
      series={Translations of Mathematical Monographs},
   publisher={American Mathematical Society, Providence, RI},
        date={2003},
      volume={215},
        ISBN={0-8218-2767-7},
        note={Translated from the 1998 Japanese original by Makoto Nagura and
  Tsuyoshi Niitani and revised by the author},
      review={\MR{1944442}},
}

\bib{MR1051089}{article}{
      author={Lakshmibai, V.},
      author={Sandhya, B.},
       title={Criterion for smoothness of {S}chubert varieties in {${\rm
  Sl}(n)/B$}},
        date={1990},
        ISSN={0253-4142},
     journal={Proc. Indian Acad. Sci. Math. Sci.},
      volume={100},
      number={1},
       pages={45\ndash 52},
         url={http://dx.doi.org/10.1007/BF02881113},
      review={\MR{1051089}},
}

\bib{MR3866895}{article}{
      author={Lapid, Erez},
      author={M\'{\i}nguez, Alberto},
       title={Geometric conditions for {$\square$}-irreducibility of certain
  representations of the general linear group over a non-archimedean local
  field},
        date={2018},
        ISSN={0001-8708},
     journal={Adv. Math.},
      volume={339},
       pages={113\ndash 190},
         url={https://doi.org/10.1016/j.aim.2018.09.027},
      review={\MR{3866895}},
}

\bib{MR4847251}{article}{
      author={Lapid, Erez},
      author={M\'{\i}nguez, Alberto},
       title={A binary operation on irreducible components of {L}usztig's
  nilpotent varieties {II}: applications and conjectures for representations of
  {${\rm GL}_n$} over a non-archimedean local field},
        date={2025},
        ISSN={1558-8599,1558-8602},
     journal={Pure Appl. Math. Q.},
      volume={21},
      number={2},
       pages={813\ndash 863},
         url={https://doi.org/10.4310/pamq.241205005734},
      review={\MR{4847251}},
}

\bib{MR1959765}{article}{
      author={Leclerc, B.},
       title={Imaginary vectors in the dual canonical basis of {$U_q(\germ
  n)$}},
        date={2003},
        ISSN={1083-4362},
     journal={Transform. Groups},
      volume={8},
      number={1},
       pages={95\ndash 104},
         url={http://dx.doi.org/10.1007/BF03326301},
      review={\MR{1959765}},
}

\bib{MR1088333}{article}{
      author={Lusztig, G.},
       title={Quivers, perverse sheaves, and quantized enveloping algebras},
        date={1991},
        ISSN={0894-0347},
     journal={J. Amer. Math. Soc.},
      volume={4},
      number={2},
       pages={365\ndash 421},
         url={http://dx.doi.org/10.2307/2939279},
      review={\MR{1088333}},
}

\bib{MR1758244}{article}{
      author={Lusztig, G.},
       title={Semicanonical bases arising from enveloping algebras},
        date={2000},
        ISSN={0001-8708},
     journal={Adv. Math.},
      volume={151},
      number={2},
       pages={129\ndash 139},
         url={http://dx.doi.org/10.1006/aima.1999.1873},
      review={\MR{1758244}},
}

\bib{MR1853139}{article}{
      author={Manivel, L.},
       title={Le lieu singulier des vari\'et\'es de {S}chubert},
        date={2001},
        ISSN={1073-7928},
     journal={Internat. Math. Res. Notices},
      number={16},
       pages={849\ndash 871},
         url={http://dx.doi.org/10.1155/S1073792801000423},
      review={\MR{1853139}},
}

\bib{MR3729270}{book}{
      author={Milne, J.~S.},
       title={Algebraic groups},
      series={Cambridge Studies in Advanced Mathematics},
   publisher={Cambridge University Press, Cambridge},
        date={2017},
      volume={170},
        ISBN={978-1-107-16748-3},
         url={https://doi.org/10.1017/9781316711736},
        note={The theory of group schemes of finite type over a field},
      review={\MR{3729270}},
}

\bib{MR0390138}{article}{
      author={Pjasecki{\u\i}, V.~S.},
       title={Linear {L}ie groups that act with a finite number of orbits},
        date={1975},
        ISSN={0374-1990},
     journal={Funkcional. Anal. i Prilo\v zen.},
      volume={9},
      number={4},
       pages={85\ndash 86},
      review={\MR{0390138 (52 \#10964)}},
}

\bib{MR130878}{article}{
      author={Rosenlicht, Maxwell},
       title={On quotient varieties and the affine embedding of certain
  homogeneous spaces},
        date={1961},
        ISSN={0002-9947},
     journal={Trans. Amer. Math. Soc.},
      volume={101},
       pages={211\ndash 223},
         url={https://doi.org/10.2307/1993371},
      review={\MR{130878}},
}

\bib{MR430336}{article}{
      author={Sato, M.},
      author={Kimura, T.},
       title={A classification of irreducible prehomogeneous vector spaces and
  their relative invariants},
        date={1977},
        ISSN={0027-7630},
     journal={Nagoya Math. J.},
      volume={65},
       pages={1\ndash 155},
         url={http://projecteuclid.org/euclid.nmj/1118796150},
      review={\MR{430336}},
}

\bib{MR2422304}{article}{
      author={Woo, Alexander},
      author={Yong, Alexander},
       title={Governing singularities of {S}chubert varieties},
        date={2008},
        ISSN={0021-8693},
     journal={J. Algebra},
      volume={320},
      number={2},
       pages={495\ndash 520},
         url={http://dx.doi.org/10.1016/j.jalgebra.2007.12.016},
      review={\MR{2422304}},
}

\end{biblist}
\end{bibdiv}
\end{document}